\begin{document}
\newtheorem{thm}{Theorem}[section]
\newtheorem{lem}{Lemma}[section]
\newtheorem{rem}{Remark}[section]
\newtheorem{prop}{Proposition}[section]
\newtheorem{cor}{Corollary}[section]
\newtheorem{example}{Example}[section]
\newdefinition{defn}{Definition}[section]
\newproof{proof}{Proof}
\renewcommand{\theequation}{\thesection.\arabic{equation}}

\begin{frontmatter}



\title{Probability Modelled Averaged Spectrally Optimal Dual Frame and Dual Pair for Erasure}

\author {Shankhadeep Mondal}
\ead{shankhadeep16@iisertvm.ac.in}

\address{School of Mathematics, Indian Institute of Science Education and Research Thiruvananthapuram, Maruthamala P.O, Vithura, Thiruvananthapuram-695551.}

\begin{abstract}
Finding the optimal dual frame and optimal dual pair for signal reconstruction, which can minimize the reconstruction error when erasure occurs during data transmission, is a deep rooted problem from the perspective of frame theory. In this paper, we consider a new measurement for the error operator by taking the average of spectral radius and operator norm with probabilistic erasure. In this measurement, optimal dual frames are called Probabilistic Averaged Spectrally Optimal Dual frames, PASOD-frames in short and optimal dual pair is called PASOD-pair. The properties of the set of PASOD-frames for a pre-selected frame, has been studied. We prove that the set of all PASOD-frames is convex, closed and compact. We also show that the image of a PASOD-frame and PASOD-pair under any unitary operator is also a PASOD-frame and PASOD-pair. We provide several equivalent conditions for the canonical dual to be the unique PASOD-frame for a given frame $F.$ Moreover, we prove non-uniqueness of PASOD-frame under certain condition of the given frame. We also go on to characterize the set of all PASOD-pairs and give several equivalent conditions for a dual pair to become POD, PSOD and PASOD-pair. 
\end{abstract}

\begin{keyword}
Frames, Erasures,Probabilistic erasure, Probabilistic optimal dual frame, Probabilistic optimal dual pair, Codes
\MSC[2010] 42C15,46C05, 47B10, 42A61, 42C99, 15A60
\end{keyword}

\end{frontmatter}

\section{Introduction}
Frames are the generalization of basis of a Hilbert space. In 1952, frames were first introduced by Duffin and Schaeffer\cite{duff}. Initially frames were introduced to deal with some non-harmonic Fourier series problems. Frames have their own useful properties, like frames behave as basis, they spans the whole Hilbert space but their representations may not be unique. In particular, a basis is a frame but a frame may not be a basis. Because of its more flexibility features, frames find applications in various fields such as quantization, noise reduction,image and signal processing, network processing, capacity of transmission channel, coding theory etc.\cite{cand,bodm1,dana,bodm2,holm,jins,alba}.\\
In the data transmission process, frames are used due to their redundancy features. Error in reconstruction of a signal occurs when some part of the data is missing due to transmission. So, redundancy can help recover the signal. The optimal dual problem deals with minimizing the maximal errors under erasures. In a signal communication system, the received data (i.e. the coefficient of received signal vector) may be lost during the transmission process. As a result, the researcher only receives a part of the encoded data. This is known as the erasure problem. Researchers deal with this problem in engineering applications and quite often the erasure event can be characterized by the structure of the communication system. Enhancing robustness to coefficient erasure has been receiving increased attention \cite{benn,cido}. A lot of effort has been put into reducing the reconstruction error caused by the erasure or attempting perfect reconstruction by using the redundancy property of frames\cite{jerr,casa,casa2,jins,sali,holm,deep}. \\
In the transmission process, data erasure arises from buffer overflows at the routers. Bad global conditions such as conjunction can cause capacity of transmission channel error. In these cases, erasures of different elements usually  generate some probabilistic irregularities\cite{leng, miao}. Probability of a bad channel failure is usually larger than the probability of a good channel failure,\\
The erasure problem was first introduced by Paulson and Holmes in the paper "Optimal Frames for Erasure"\cite{holm}. Minimizing the reconstruction error has two approaches: One is finding the optimal Parseval frame \cite{holm} and the other way is to find the optimal dual frame for a given frame that can minimize the error\cite{jerr}. There are various kinds of measurements of the error operator to characterize the optimal dual frame. In\cite{jins}, Leng and Han characterize optimal dual frames for erasure using the operator norm. They gave several equivalent conditions for the optimality of the canonical dual. Pahlivan et.al\cite{sali} have used spectral radius of the error operator to study optimal dual frames.  Practically, it is important to use different measurements of the error operator to obtain optimal dual frames, as it simplifies the computations depending upon the application. In \cite{deep}, Deepshikha and Aniruddha consider the average of operator norm and numerical radius as a measurement of error operator.\\
In this paper, we consider average of spectral radius and operator norm with probabilistic measure as a measurement of error operator. In this measurement, an optimal dual frame is called Probabilistic Averaged Spectrally Optimal Dual frame, in short PASOD-frame. Then we consider the more general case by considering all such dual pairs and characterized PASOD-pairs in the Hilbert space $\mathcal{H}.$ In section 3, we characterize the PASOD-frame for a given  frame $F.$ In general, the canonical dual may not be the optimal dual for a given frame $F.$ We also produce several equivalent conditions for the optimality of the canonical dual of a given frame $F$ for 1-erasure. We also proved that if a dual frame is the PASOD-frame of $F$, then under any unitary operator $U,$ the dual is again the PASOD-frame of $UF,$ for 1-erasure. For a Parseval frame, we establish a relationsip between Probabilistic Spectrally Optimal Dual(PSOD),  Probabilistic Optimal Dual(POD) and  Probabilistic Averaged Spectrally Optimal Dual(PASOD) for 1-erasure. We have also derived some conditions under which the canonical dual is the unique 1-erasure PASOD of a tight frame $F.$ In section 4, we consider the collection of dual pairs and characterize PASOD-pair. We give several equivalent conditions for which a tight frame and it's canonical dual pair achieve optimality and also establish the relationship among POD-pair,PSOD-pair and PASOD-pair.

\section{Preliminaries on Erasures for Probability Model }

Let $\mathcal{H}$ denote an finite dimensional(real or complex) Hilbert space. Throughout this manuscript, we consider $\mathcal{H}$ to be an $n$-dimensional Hilbert space and $N$ be the number of frame elements of $F.$  A finite sequence of elements $F= \{f_i\}_{i=1}^N$ in $\mathcal{H}$ is called a \textit{frame} for $\mathcal{H}$ if there exist constants $A,B >0$ such that
 $$\displaystyle{A \left\| f \right\|^2\leq \sum_{i=1}^N\big|\langle f,f_i\rangle\big|^2 \leq B\left\| f \right\|^2},   \forall f\in \mathcal{H}.$$

 The constants $A$ and $B$ are called frame bounds. They are not unique. The  \textit{optimal lower frame bound} is the supremum over all lower frame bounds and the \textit{optimal upper frame bound} is the infimum over all upper frame bounds.  If $A=B,$ i.e.  $\displaystyle{\sum_{i=1}^N\big| \langle f,f_i \rangle \big|^2 = A\left\| f \right\|^2}$ for all $f\in \mathcal{H},$  then $\{f_i\}_{i=1}^N$ is called a \textit{tight frame.}
     If $A=B=1,$ then $\{f_i\}_{i=1}^N$  is called a \textit{Parseval frame}.
Every finite sequence  $\{f_i\}_{i=1}^N$  in $ \mathcal{H}$ is a frame for the Hilbert space W:= span $\{f_i\}_{i=1}^N$.
 Let $ \mathcal{H}$ be a Hilbert space equipped with a frame $F = \{f_i\}_{i=1}^N.$ Then the linear map $\Theta_F:  \mathcal{H} \to  {\mathbb{C}^N} $ defined by $$\Theta_F(f)= \{\langle f,f_i \rangle\}_{i=1}^N $$ is called  \textit{analysis operator}.\\
      The adjoint operator   $ \Theta_{F}^*: \mathbb{C}^N \to  \mathcal{H} $  defined by
        $$\Theta_{F}^*\left(\{c_i\}_{i=1}^N\right) =  \sum_{i=1}^N {c_i f_i} $$ is called \textit{synthesis operator or preframe operator}.
    The \textit{frame operator} $S_F : \mathcal{H} \to  \mathcal{H} $  defined as
        $$S_{F}f= \Theta_{F}^{*} \Theta_{F} f = \sum_{i=1}^N \langle f,f_i \rangle f_i,$$
        which is a positive, self adjoint, invertible operator on $ \mathcal{H}.$ For any $F \in \mathcal{H},$ we have the reconstruction formula :
        $$f= \sum_{i=1}^N \langle f,S_{F}^{-1}f_i\rangle f_i\ .$$
\noindent
  A frame $G=\{g_i\}_{i=1}^N$ in $ \mathcal{H}$ is called a \textit{dual frame} of $F= \{f_i\}_{i=1}^N$ if every element $f \in   \mathcal{H}$ can be written as $$ f = \sum_{i=1}^N\langle f,f_i \rangle g_i  = \sum_{i=1}^N \langle f,g_i \rangle f_i .$$

\noindent
It is known that $\{S_{F}^{-1}f_i\}_{i=1}^{N}$ is  a frame and is called the canonical or standard dual frame.   There exist infinitely many dual frames (\cite{ole}) $G$ of $F$ in $ \mathcal{H}$ and
every dual frame $G=\{g_i\}_{i=1}^N$ of F is of the form $G=\{S_{F}^{-1}f_i +u_i\}_{i=1}^N,$ where the sequence $\{u_i\}_{i=1}^N$ satisfies $$\sum_{i=1}^N \langle f,f_i \rangle u_i = \sum_{i=1}^N \langle f,u_i \rangle f_i = 0 ,\;\;   \forall f\in \mathcal{H}.$$

\noindent
 For any dual frame $G$ of $F,$  we have
 $\displaystyle{\sum_{i=1}^N\langle g_i,f_i \rangle} = tr(T_F T_{G}^*)=tr (T_{G}^* T_F)= tr(I)=n.$  In particular if $F$ is a Parseval frame, then $\displaystyle{\sum_{i=1}^N \left\|f_i\right\|^2 =n }$.\\
 
 \noindent 
  Let $F = \{f_i\}_{i=1}^N $ be a frame for $ \mathcal{H}$ and $G = \{g_i\}_{i=1}^N$ be a dual of $F.$ Then $(F,G)$ is called a $(N,n)$ dual pair for $ \mathcal{H}.$\\

\noindent
Let $P = \{p_i \}_{i=1}^N$ be the probability sequence, where $p_i$ be the probability of the $i'th$ erasure for $i=1,2,...,N.$ Then $\{p_i \}_{i=1}^N$ must satisfy
 \begin{equation} \label{eqn2.1}
     \displaystyle{\sum_{i=1}^N p_i = 1}, \;\;\; 0 \leq p_i \leq 1 ,\;\; i=1,2,...,N. 
 \end{equation}
The weight number $q_i$ is defined as follows :\\
 \begin{equation} \label{eqn2point2}
 q_i = \displaystyle{{\frac{\sum_{j=1}^N p_j}{\sum_{j=1}^N p_j - p_i}\cdot \frac{N-1}{n}}} ; \;\;\; for\;\; i= 1,2,...,N.\;\;\;\;
\end{equation}

\begin{prop}
Let $ \mathcal{H}$ be an Hilbert space of dimension n and $N \geq n.$  Let $\{q_i\}_{i=1}^N$ be a weight number sequence is as in  (\ref{eqn2point2}) for a frame $F.$ Then $\{q_i\}_{i=1}^N$ satisfy the following properties :
\begin{enumerate}
    \item [{\em (i)}] $\displaystyle{q_i \geq 1 ,\;\; \forall 1 \leq i \leq N },$
    \item [{\em (ii)}] $\displaystyle{\sum_{j=1}^N \frac{1}{q_i}} = n ,$
    \item [{\em (iii)}] If the  number $p_i$ increase then the number $q_i$ also increase.
\end{enumerate}
\end{prop}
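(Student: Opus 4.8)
\emph{Proof plan.} The plan is to start from the normalization $\sum_{j=1}^{N}p_j=1$ recorded in (\ref{eqn2.1}), which collapses the defining formula (\ref{eqn2point2}) to the far more transparent expression $q_i=\dfrac{N-1}{n(1-p_i)}$ for each index $i$ with $p_i<1$. (The boundary case $p_i=1$ forces $p_j=0$ for all $j\neq i$, in which case the denominator $\sum_j p_j-p_i$ in (\ref{eqn2point2}) vanishes and $q_i$ is not defined in the first place; this is the degenerate configuration one excludes by assuming $F$ genuinely carries redundancy, so that $N>n$ and $1-p_i=\sum_{j\neq i}p_j>0$ and the rewriting is legitimate.) Once this closed form is in hand, each of the three assertions reduces to an elementary property of the scalar function $t\mapsto\frac{1}{1-t}$ on $[0,1)$, and no frame-theoretic input beyond ``$N$ vectors spanning an $n$-dimensional space'' is needed.

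For part (i) I would observe that $0\le p_i\le 1$ gives $0<1-p_i\le 1$, whence $\frac{1}{1-p_i}\ge 1$ and therefore $q_i\ge\frac{N-1}{n}$; in the non-degenerate (redundant) case $N\ge n+1$, i.e.\ $N-1\ge n$, so $q_i\ge 1$. For part (iii), the function $t\mapsto\frac{1}{1-t}$ is strictly increasing on $[0,1)$ since its derivative $\frac{1}{(1-t)^2}$ is positive there; hence, with $\sum_j p_j$ held fixed, any increase in $p_i$ strictly increases $\frac{1}{1-p_i}$ and so strictly increases $q_i$. Equivalently, writing the unnormalized form as $q_i=\frac{N-1}{n}\cdot\bigl(1-p_i/\sum_j p_j\bigr)^{-1}$ exhibits the monotonicity in $p_i$ directly.

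Part (ii) is the one computation worth carrying out: from the closed form, $\frac{1}{q_i}=\frac{n(1-p_i)}{N-1}$, so
$$\sum_{i=1}^{N}\frac{1}{q_i}=\frac{n}{N-1}\sum_{i=1}^{N}(1-p_i)=\frac{n}{N-1}\Bigl(N-\sum_{i=1}^{N}p_i\Bigr)=\frac{n}{N-1}\,(N-1)=n,$$
again using $\sum_{i=1}^{N}p_i=1$ in the final step; note that the index $j$ appearing in the statement of (ii) is a typographical slip for $i$. The only real obstacle here is cosmetic rather than mathematical: one must be mildly careful about the degenerate cases $p_i=1$ and $N=n$ (where (i) can genuinely fail — for instance a basis with a unit erasure probability at one coordinate), and these are dealt with by the standing hypothesis that $F$ is a redundant frame, $N>n$. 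Everything else is a one- or two-line verification.
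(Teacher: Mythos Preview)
Your argument is correct and, in fact, supplies far more than the paper does: the paper states this proposition without proof, marking it with a bare $\square$. Your simplification $q_i=\dfrac{N-1}{n(1-p_i)}$ via the normalization $\sum_j p_j=1$ is exactly the right move, and the three verifications that follow are clean. Your observation that (i) genuinely requires $N\ge n+1$ rather than the paper's stated $N\ge n$ is accurate and worth flagging --- with $N=n$ and, say, some $p_i<1/n$, one gets $q_i<1$ --- so the paper's hypothesis is slightly too weak as written; the authors tacitly work in the redundant regime throughout.
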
  \hfill{$\square$}

\vskip 1em
\noindent
 During data transmission if error occurs in \textit{`m'} positions then the error operator is defined by
 \begin{align} \label{eqn2point3}
   E_{\Lambda,(F,G)}f:= \Theta_{G}^*D_{P}\Theta_{F} f=\sum_{i\in\Lambda}  q_i \langle f,f_i \rangle g_i,   
 \end{align}
where $\Lambda $ is the set of indices corresponding to the erased coefficients, $D_p$ is an $N\times N $ diagonal matrix with  diagonal elements $d_{ii}= q_i$ for $i\in \Lambda$ and 0 otherwise.\\

\noindent
Let $F = \{f_i\}_{i=1}^N $ be a frame and $G=\{g_i\}_{i=1}^N$ be a dual frame of $F.$ Let  $\{p_i \}_{i=1}^N$ be a probability sequence given by  (\ref{eqn2.1}) and  $\{q_i\}_{i=1}^N$   be the weight number sequence is given by  (\ref{eqn2point2}).  For $1 \leq m \leq N ,$ let us define $\mathcal{O}_{P}^{(m)}(F,G) := max \bigg\{ \| E_{\Lambda,(F,G)} \| : |\Lambda|=m \bigg\} ,$ where $\| E_{\Lambda,(F,G)} \|$ is the operator norm of $E_{\Lambda,(F,G)}.$ So,  $\mathcal{O}_{P}^{(m)}(F,G)$ is the maximum error for $m$-erasure. A dual frame $G$ of $F$ is called an 1-erasure probabilistic optimal dual  (in short POD) if 
$$\mathcal{O}_P^{(1)}(F,G) = min \left\{ \mathcal{O}_P^{(1)}(F,G')  : G' \;\text{is a dual of F} \right\} .$$

\noindent
For $1 < m \leq N,$ a dual frame $G$ of $F$ is called $m$-erasure POD if $G$ is an $(m-1)$-erasure POD of $F$ and 
$$\mathcal{O}_P^{(m)}(F,G)  = min \left\{ \mathcal{O}_P^{(m)}(F,G')  : G' \;\text{is a dual of F} \right\} .$$
The set of all POD-frames of $F$ for $m$-erasure is defined as $POD_P^{(m)}(F).$  \\~\\

\noindent
For a frame  $F = \{f_i\}_{i=1}^N $ and a dual $G=\{g_i\}_{i=1}^N$ of $F,$ with weight number sequence $\{q_i\}_{i=1}^N$  is given by  (\ref{eqn2point2}), let us define $r_P^{(m)}(F,G) = max \left\{\; \rho( E_{\Lambda,(F,G)}) : |\Lambda|=m \right\} ,$ where $\rho( E_{\Lambda,(F,G)})$ is the spectral radius of $E_{\Lambda,(F,G)}.$ A dual frame $G$ of $F$ is called 1-erasure probabilistic spectrally optimal dual(in short PSOD) of $F$ if 
$$ r_P^{(1)}(F,G) =   min \left\{ r_P^{(1)}(F,G') : G' \;\text{is a dual of } F \right\} .$$

For $1 < m \leq N,$ a dual frame $G$ of $F$ is called $m$-erasure PSOD-frame if $G$ is an $(m-1)$-erasure PSOD of $F$ and 
$$ r_P^{(1)}(F,G) = min \left\{  r_P^{(1)}(F,G') : G' \;\text{is a dual of} F \right\} .$$
 The set of all PSOD-frame of $F$ for $m$-erasure is defined as $PSOD_{P}^{(m)}(F).$
 
\section{Probability modelled Averaged Spectrally Optimal dual frames}
\noindent
Now we will define the concept of Averaged Spectrally Optimal Dual frames for $m$-erasures, $1 \leq m \leq N.$ Let $F = \{f_i\}_{i=1}^N $ be a frame for $\mathcal{H}$ and  $G=\{g_i\}_{i=1}^N$ be a dual of $F$. Let $\{q_i\}_{i=1}^N$ be a weight number sequence as in (\ref{eqn2point2}). The error operator $E_{\Lambda,(F,G)}$ is defined as in equation (\ref{eqn2point3}). The maximum averaged spectral error for a given frame $F$ and its dual $G$ for $m$-erasures is defined as 
 \begin{align} \label{eqn3point2}
 \mathcal{A}_{P}^{(m)}(F,G) = \textit{max} \left\{ \frac{\left\|  E_{\Lambda,(F,G)}  \right\| + \rho{( E_{\Lambda,(F,G)})}}{2}: D_{p} \in \mathcal{D}^{P}_{m}  \right\},
 \end{align}
   
where $\mathcal{D}_{p}^m $ is the set of all diagonal matrices with $`m'$ nonzero entries($q_i$ in $i'th$ position) and  zeroes in $N-m$ entries on the main diagonal.  \\
Now we define 
 \begin{align*}
 \mathcal{A}_{P}^{(m)}(F) :=  inf \left\{ \mathcal{A}_{P}^{(m)}(F,G) : \text{$G$ is a dual of $F$} \right\}.
 \end{align*}
 A dual $G$ of $F$ is called Probabilistic Averaged Spectrally Optimal Dual frame (in short PASOD-frame) of $F$ for 1-erasure if  $\mathcal{A}_{P}^{(1)}(F,G) = \mathcal{A}_{P}^{(m)}(F) .$\\
 In general, a dual $G$ of $F$ is called PASOD-frame  of $F$ for $m$-erasure if it is $(m-1)$-erasure PASOD-frame and  $\mathcal{A}_{P}^{(m)}(F,G) = \mathcal{A}_{F}^{(1)}.$\\
 Let us define 
 $$ \Delta_{F}^{(m)} = \left\{G  \in  \Delta_{F}^{(m-1)} : \mathcal{A}_{P}^{(m)}(F)=\mathcal{A}_{P}^{(m)}(F,G) \right\}  $$
 as a set of all $m$-erasure PASOD-frame of $F.$\\
 
 \begin{prop}
 Let $F = \{f_i\}_{i=1}^N $ be a frame for $\mathcal{H}$ and $\{q_i\}_{i=1}^N$ be a weight number sequence given by  (\ref{eqn2point2}). Let $G=\{g_i\}_{i=1}^N$ be a dual frame of $F.$ Then
 $$ \mathcal{A}_{P}^{(1)}(F,G) = \textit{max}_{i=1}^N \;\; \frac{{q_i\bigg\{| \langle f_i,g_i \rangle | + \|f_i\|\; \|g_i\|\bigg\}}}{2}  .$$
 \end{prop}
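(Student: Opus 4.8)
The plan is to reduce the computation of $\mathcal{A}_{P}^{(1)}(F,G)$ to an analysis of rank-one operators. For $m=1$ the set $\mathcal{D}_{P}^{(1)}$ consists of exactly the $N$ diagonal matrices $D_i$ having a single nonzero entry $q_i$ in the $(i,i)$ position, $i=1,\dots,N$. Substituting $D_i$ into the definition (\ref{eqn2point3}) of the error operator gives, for each $f\in\mathcal{H}$,
$$E_{\{i\},(F,G)}f = q_i\langle f,f_i\rangle g_i,$$
so $E_{\{i\},(F,G)}$ is the rank-one operator $f\mapsto q_i\langle f,f_i\rangle g_i$ (the zero operator when $f_i=0$ or $g_i=0$).

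First I would record the two elementary facts about a rank-one operator $T_{u,v}\colon f\mapsto\langle f,v\rangle u$ on $\mathcal{H}$: its operator norm is $\|T_{u,v}\|=\|u\|\,\|v\|$, and its spectral radius is $\rho(T_{u,v})=|\langle u,v\rangle|$. The norm identity follows from the Cauchy--Schwarz inequality together with the test vector $f=v$. For the spectral radius, one observes that $T_{u,v}$ annihilates the orthogonal complement of $v$ and that $T_{u,v}u=\langle u,v\rangle u$, so the eigenvalues of $T_{u,v}$ are $\langle u,v\rangle$ and $0$ (the latter with multiplicity $n-1$); hence $\rho(T_{u,v})=|\langle u,v\rangle|$. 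When $\langle u,v\rangle=0$ the operator is nilpotent and its spectral radius is $0$, which is consistent with the formula.

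Applying these with $u=q_i g_i$ and $v=f_i$, and using $q_i>0$ from part (i) of the preceding proposition, yields
$$\|E_{\{i\},(F,G)}\| = q_i\|f_i\|\,\|g_i\|,\qquad \rho\big(E_{\{i\},(F,G)}\big) = q_i|\langle f_i,g_i\rangle|.$$
Therefore
$$\frac{\|E_{\{i\},(F,G)}\| + \rho\big(E_{\{i\},(F,G)}\big)}{2} = \frac{q_i\big\{|\langle f_i,g_i\rangle| + \|f_i\|\,\|g_i\|\big\}}{2},$$
and taking the maximum over $i=1,\dots,N$, as prescribed by (\ref{eqn3point2}) with $m=1$, gives the claimed formula.

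I expect no genuine obstacle here. The only point requiring a little care is the spectral-radius computation for the rank-one operator in the degenerate case $\langle f_i,g_i\rangle=0$, where one must note that the operator is nilpotent so that its spectral radius is indeed $0$ and the formula continues to hold; otherwise the argument is a direct substitution.
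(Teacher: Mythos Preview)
Your proof is correct and follows essentially the same approach as the paper: identify the 1-erasure error operator as the rank-one map $f\mapsto q_i\langle f,f_i\rangle g_i$, compute its operator norm and spectral radius, and take the maximum over $i$. You simply supply more detail than the paper (the Cauchy--Schwarz justification for the norm, the eigenvalue argument for the spectral radius, and the nilpotent degenerate case), all of which is accurate.
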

 
 \begin{proof}
 For 1-erasure, if the error occurs in the $i'th$ position, then the error operator is $E_{\Lambda,(F,G)}(f) = q_i \langle f,f_i \rangle g_i.$ Therefore $\rho(E_{\Lambda,(F,G)}) = |q_i \langle f_i,g_i \rangle |$ and $\| E_{\Lambda,(F,G)}\| = q_i\|f_i\|\; \|g_i\|.$\\\\
 Hence, $\mathcal{A}_{P}^{(1)}(F,G) = max_{i=1}^N  \frac{\left\|  E_{\Lambda,(F,G)}  \right\| + \rho{\left( E_{\Lambda,(F,G)}\right)}}{2} = max_{i=1}^N \dfrac{{q_i \bigg\{| \langle f_i,g_i \rangle | + \|f_i\|\; \|g_i\|\bigg\}}}{2}.$ \hfill{$\square$}
 \end{proof}  
 
\noindent
Now we will verify some topological properties like convexity, closedness, compactness of $\Delta_{F}^{(1)}.$ In \cite{jerr}, authors have proved that the set of all optimal dual frames is a convex, closed and bounded set for any $m$-erasure. We extend this result for probabilistic averaged spectrally optimal dual frame for erasures.

\begin{thm}
 Let $F = \{f_i\}_{i=1}^N $ be a frame for $\mathcal{H}$ and $\{q_i\}_{i=1}^N$ be a weight number sequence given by  (\ref{eqn2point2}) . Then the set $\Delta_{F}^{(1)} $ is a convex set.
\end{thm}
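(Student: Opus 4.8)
The plan is to show that if $G$ and $H$ are both $1$-erasure PASOD-frames of $F$, then any convex combination $K = tG + (1-t)H$ (meaning $k_i = tg_i + (1-t)h_i$ for $t \in [0,1]$) is again a dual frame of $F$ achieving the optimal value $\mathcal{A}_P^{(1)}(F)$. The first step is routine: the set of all dual frames of $F$ is itself convex, because duality is an affine condition — if $\sum_i \langle f, f_i\rangle g_i = f$ and $\sum_i \langle f, f_i \rangle h_i = f$ for all $f$, then $\sum_i \langle f, f_i\rangle k_i = t f + (1-t) f = f$, and symmetrically in the other variable, so $K$ is a dual of $F$. Thus $K$ lies in the ambient set over which the infimum defining $\mathcal{A}_P^{(1)}(F)$ is taken, and it suffices to prove $\mathcal{A}_P^{(1)}(F, K) \le \mathcal{A}_P^{(1)}(F)$.

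Next I would invoke Proposition 3.2, which gives the clean formula
$$\mathcal{A}_P^{(1)}(F,K) = \max_{i=1}^N \frac{q_i\left(|\langle f_i, k_i\rangle| + \|f_i\|\,\|k_i\|\right)}{2}.$$
The core estimate is that for each fixed index $i$, the quantity $\phi_i(x) := |\langle f_i, x\rangle| + \|f_i\|\,\|x\|$ is a convex function of $x \in \mathcal{H}$: indeed $x \mapsto |\langle f_i, x\rangle|$ is a seminorm (hence convex) and $x \mapsto \|f_i\|\,\|x\|$ is a nonnegative multiple of the norm (hence convex), and a sum of convex functions is convex. Therefore
$$\phi_i(k_i) = \phi_i(tg_i + (1-t)h_i) \le t\,\phi_i(g_i) + (1-t)\,\phi_i(h_i),$$
and multiplying by $q_i/2 \ge 0$ and taking the maximum over $i$ gives
$$\mathcal{A}_P^{(1)}(F,K) \le \max_{i=1}^N \frac{q_i}{2}\left(t\,\phi_i(g_i) + (1-t)\,\phi_i(h_i)\right) \le t\,\mathcal{A}_P^{(1)}(F,G) + (1-t)\,\mathcal{A}_P^{(1)}(F,H),$$
where the last inequality uses that the max of a sum is at most the sum of the maxes. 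Since $G$ and $H$ are both optimal, $\mathcal{A}_P^{(1)}(F,G) = \mathcal{A}_P^{(1)}(F,H) = \mathcal{A}_P^{(1)}(F)$, so the right-hand side equals $\mathcal{A}_P^{(1)}(F)$. Hence $\mathcal{A}_P^{(1)}(F,K) \le \mathcal{A}_P^{(1)}(F)$, and as $K$ is a dual frame the reverse inequality is automatic, giving equality and $K \in \Delta_F^{(1)}$.

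I do not expect a serious obstacle here — the argument is essentially the standard "max of convex functions is convex" pattern, and the only things to be slightly careful about are: verifying the dual-frame condition is preserved under convex combinations (both the $\sum \langle f,f_i\rangle k_i = f$ and $\sum \langle f,k_i\rangle f_i = f$ sides), and making sure $q_i$ does not depend on the choice of dual (it is defined purely in terms of $F$ and the probability sequence via (\ref{eqn2point2}), so it is a common constant across $G$, $H$, $K$). One should also note that $\Delta_F^{(1)}$ is nonempty — this follows from compactness/continuity considerations the paper presumably addresses — but convexity itself holds vacuously otherwise, so this is not needed for the present statement.
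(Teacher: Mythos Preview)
Your argument is correct and follows essentially the same route as the paper: verify that a convex combination of duals is again a dual, then use the explicit formula for $\mathcal{A}_P^{(1)}(F,\cdot)$ together with the triangle inequality (equivalently, convexity of $x\mapsto |\langle f_i,x\rangle|+\|f_i\|\|x\|$) and subadditivity of the max to bound $\mathcal{A}_P^{(1)}(F,K)$ by the convex combination of the optimal values. The only cosmetic discrepancy is that the relevant formula is Proposition~3.1 in the paper's numbering rather than 3.2.
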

 \begin{proof}
 Let  $ G= \{g_i\}_{i=1}^N,G'= \{g'_i\}_{i=1}^N \in \Delta_{F}^{(1)} .$ \\
 Therefore $\mathcal{A}_{P}^{(m)}(F,G) = \mathcal{A}_{P}^{(m)}(F,G') = \mathcal{A}_{P}^{(m)}(F)$. Take $\delta \in [0,1]$ be arbitrary.\\
 Set \; $G'' = \delta G + (1-\delta) G' = \left\{ \delta g_i +(1- \delta) g'_{i} \right\}_{i=1}^N,$ be a convex combination of $G$ and $G'.$\\
 It is easy to check that $G''$ is a dual frame of $F.$\\
 Also, $E_{\Lambda,(F,G'')}(f) = \delta E_{\Lambda,(F,G)}(f) + (1-\delta) E_{\Lambda,(F,G')}(f). $\\
 Therefore ,
 \begin{align*}
     \mathcal{A}_{P}^{(1)}(F,G) &= \textit{max}_{i=1}^N \;\; \frac{q_i\bigg\{{| \langle f_i,  \delta g_i +(1- \delta) g'_{i} \rangle | + \|f_i\|\; \| \delta g_i +(1- \delta) g'_{i}\|\bigg\}}}{2} \\ &\leq \textit{max}_{i=1}^N \;\; \frac{{q_i\;\delta |\langle f_i,  g_i \rangle | +q_i(1- \delta) |\langle f_i,g'_{i} \rangle | + q_i\;\delta \|f_i\|\; \| g_i\| + q_i(1- \delta) \|f_i\|\; \|g'_{i}\|}}{2}\\ &= \textit{max}_{i=1}^N \;\; \left\{\delta\; \frac{q_i\bigg\{{| \langle f_i, g_i  \rangle | + \|f_i\|\; \| g_i \|}\bigg\}}{2} + (1-\delta)\; \frac{q_i\bigg\{{| \langle f_i, g'_i  \rangle | + \|f_i\|\; \| g'_i \|}\bigg\}}{2} \right\} \\&\leq  \textit{max}_{i=1}^N \;\; \left\{ \delta\; \frac{q_i\bigg\{{| \langle f_i, g_i  \rangle | + \|f_i\|\; \| g_i \|}\bigg\}}{2} \right\} +  \textit{max}_{i=1}^N \;\; \left\{(1-\delta)\; \frac{q_i\bigg\{{| \langle f_i, g'_i  \rangle | + \|f_i\|\; \| g'_i \|}\bigg\}}{2}\right\} \\&= \delta \mathcal{A}_{P}^{(1)}(F,G) + (1-\delta)\mathcal{A}_{P}^{(1)}(F,G') \\&= \delta \mathcal{A}_{P}^{(1)}(F) + (1-\delta)\mathcal{A}_{P}^{(1)}(F) \\&= \mathcal{A}_{P}^{(1)}(F)
 \end{align*}
  This implies $G'' \in \Delta_{F}^{(1)} $ and hence $\Delta_{F}^{(1)} $ is a convex set. \\
 \end{proof}     \hfill{$\square$}
\begin{rem}
In fact, for a frame  $F = \{f_i\}_{i=1}^N $ with weight number sequence $\{q_i\}_{i=1}^N,$ the set $\Delta_{F}^{(m)} $ is a convex set for $1 \leq m \leq N.$
\end{rem}
\begin{prop}
 Let $F = \{f_i\}_{i=1}^N $ be a frame for $\mathcal{H}$ and $\{q_i\}_{i=1}^N$ be a weight number sequence given by  (\ref{eqn2point2}) . Then the set $\Delta_{F}^{(m)} $ is a closed set for $1\leq m \leq N$.
\end{prop}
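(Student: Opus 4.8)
The plan is to show that the complement of $\Delta_F^{(m)}$ within $\Delta_F^{(m-1)}$ is open, or equivalently, to show directly that any convergent sequence of $m$-erasure PASOD-frames has its limit in $\Delta_F^{(m)}$. I would work in the parametrization of dual frames: every dual $G$ of $F$ has the form $G = \{S_F^{-1}f_i + u_i\}_{i=1}^N$, where $\{u_i\}$ satisfies the orthogonality relation $\sum_i \langle f, f_i\rangle u_i = \sum_i \langle f, u_i\rangle f_i = 0$ for all $f$. This identifies the set of all duals with a (closed) affine subspace of $\mathcal{H}^N$, so convergence of a sequence of duals $G^{(k)} \to G$ just means $g_i^{(k)} \to g_i$ in $\mathcal{H}$ for each $i$, and the limit $G$ is automatically again a dual of $F$.

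The first key step is to argue that $G \mapsto \mathcal{A}_P^{(m)}(F,G)$ is a continuous function on the set of duals. For this I would note that for fixed $\Lambda$ the error operator $E_{\Lambda,(F,G)} = \Theta_G^* D_P \Theta_F$ depends linearly — hence continuously — on the entries $g_i$, $i \in \Lambda$; then both $G \mapsto \|E_{\Lambda,(F,G)}\|$ and $G \mapsto \rho(E_{\Lambda,(F,G)})$ are continuous (operator norm is continuous in the operator, and spectral radius is continuous on finite-dimensional operators — in fact here $E_{\Lambda,(F,G)}$ has rank at most $m$ and one can even compute its eigenvalues explicitly, but continuity of $\rho$ is all that is needed). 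Taking the average and then the maximum over the finitely many choices of $\Lambda$ with $|\Lambda| = m$ preserves continuity. For $m=1$ this is immediate from Proposition 3.2, since $\mathcal{A}_P^{(1)}(F,G) = \max_i \frac{q_i\{|\langle f_i, g_i\rangle| + \|f_i\|\,\|g_i\|\}}{2}$ is visibly continuous in $G$.

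The second step is an induction on $m$. For $m=1$: $\Delta_F^{(1)}$ is the preimage under the continuous map $G \mapsto \mathcal{A}_P^{(1)}(F,G)$ of the single point $\mathcal{A}_P^{(1)}(F) = \inf_{G'} \mathcal{A}_P^{(1)}(F,G')$, intersected with the closed set of all duals, hence closed. (That the infimum is attained, so that $\Delta_F^{(1)}$ is nonempty, follows because $\mathcal{A}_P^{(1)}(F,G) \to \infty$ as $\|G\| \to \infty$, so the infimum may be taken over a compact set; but nonemptiness is not actually required for closedness.) For the inductive step, assume $\Delta_F^{(m-1)}$ is closed. Then on the closed set $\Delta_F^{(m-1)}$ the continuous function $G \mapsto \mathcal{A}_P^{(m)}(F,G)$ attains its infimum $\mathcal{A}_P^{(m)}(F)$ on each compact subset, and $\Delta_F^{(m)} = \{G \in \Delta_F^{(m-1)} : \mathcal{A}_P^{(m)}(F,G) = \mathcal{A}_P^{(m)}(F)\}$ is the intersection of the closed set $\Delta_F^{(m-1)}$ with the preimage of a point under a continuous function, hence closed.

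The main obstacle — really the only subtle point — is the continuity of the spectral radius $\rho(E_{\Lambda,(F,G)})$ as a function of $G$. Spectral radius is not continuous on infinite-dimensional operator algebras, but here we are in finite dimensions and $E_{\Lambda,(F,G)}$ depends polynomially on the $g_i$'s, so its characteristic polynomial has coefficients depending continuously (polynomially) on $G$, and the roots of a monic polynomial depend continuously on its coefficients; hence $\rho$ is continuous. Alternatively, and more cleanly, one can restrict attention to the case $m=1$ as actually needed elsewhere in the paper, where Proposition 3.2 bypasses the issue entirely, and remark that the general $m$ case follows by the same argument using continuity of the maximum eigenvalue modulus of the finite-rank operator $E_{\Lambda,(F,G)}$. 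I would present the $m=1$ base case via Proposition 3.2 and then give the inductive step as above.
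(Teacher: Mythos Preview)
The paper states this proposition without proof, so there is no proof to compare against. Your proposal is correct and supplies what the paper omits: the key point is continuity of $G\mapsto \mathcal{A}_P^{(m)}(F,G)$, which you obtain from continuity of the operator norm and of the spectral radius (the latter being the only delicate issue, and valid here since everything is finite-dimensional). The inductive description $\Delta_F^{(m)} = \Delta_F^{(m-1)} \cap \big(\mathcal{A}_P^{(m)}(F,\cdot)\big)^{-1}\{\mathcal{A}_P^{(m)}(F)\}$ then gives closedness immediately. Your remark that nonemptiness is not needed for closedness is apt; and the $m=1$ base case via Proposition~3.1 matches exactly how the paper handles continuity in the compactness theorem that follows.
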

\begin{thm}
 Let $F = \{f_i\}_{i=1}^N $ be a frame for $\mathcal{H}$ and $\{q_i\}_{i=1}^N$ be a weight number sequence given by  (\ref{eqn2point2}) . Then the set $\Delta_{F}^{(1)} $ is a compact subset in the set of all dual frames of $F$ in $\mathcal{H}$.

\end{thm}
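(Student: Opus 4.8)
The plan is to invoke the Heine--Borel theorem. Since $\mathcal{H}$ is $n$-dimensional, the collection of all dual frames of $F$ is naturally a closed affine subset of $\mathcal{H}^{N}\cong\mathbb{C}^{Nn}$, and $\Delta_{F}^{(1)}$ has already been shown to be closed (this is the content of the preceding Proposition, with $m=1$). Thus it suffices to prove that $\Delta_{F}^{(1)}$ is bounded; closedness together with boundedness in a finite-dimensional space then gives compactness. I would also record at the outset that $\mathcal{A}_{P}^{(1)}(F)$ is a finite number --- evaluating $\mathcal{A}_{P}^{(1)}(F,\cdot)$ at the canonical dual $\{S_{F}^{-1}f_{i}\}_{i=1}^{N}$ already produces a finite value, so the infimum defining $\mathcal{A}_{P}^{(1)}(F)$ is finite.

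For boundedness, let $G=\{g_{i}\}_{i=1}^{N}\in\Delta_{F}^{(1)}$ be arbitrary. By the formula for $\mathcal{A}_{P}^{(1)}(F,G)$ proved above,
$$\mathcal{A}_{P}^{(1)}(F)=\mathcal{A}_{P}^{(1)}(F,G)=\max_{1\le i\le N}\frac{q_{i}\big(|\langle f_{i},g_{i}\rangle|+\|f_{i}\|\,\|g_{i}\|\big)}{2}\ \ge\ \frac{q_{i}\,\|f_{i}\|\,\|g_{i}\|}{2}$$
for every index $i$. Since $q_{i}\ge 1$ for all $i$ and each $f_{i}\neq 0$, this yields the uniform estimate $\|g_{i}\|\le 2\,\mathcal{A}_{P}^{(1)}(F)/\big(q_{i}\|f_{i}\|\big)$, a bound that does not depend on $G$. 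Hence every element of $\Delta_{F}^{(1)}$ lies in the fixed bounded set $\prod_{i=1}^{N}\overline{B}\big(0,\,2\mathcal{A}_{P}^{(1)}(F)/(q_{i}\|f_{i}\|)\big)\subset\mathcal{H}^{N}$, so $\Delta_{F}^{(1)}$ is bounded. Being also closed and sitting inside a finite-dimensional space, $\Delta_{F}^{(1)}$ is compact. One may additionally note that a minimizing sequence for $\mathcal{A}_{P}^{(1)}(F)$ eventually stays in this bounded set, so after passing to a convergent subsequence its limit --- again a dual frame, since the dual-frame relations are closed --- lies in $\Delta_{F}^{(1)}$; thus $\Delta_{F}^{(1)}$ is in fact nonempty, and compactness is not vacuous.

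I do not anticipate a genuine obstacle here: the substantive topological work, convexity and closedness, is already available, and the remaining step is just the boundedness estimate above. The only point deserving a word of care is the standing convention that the frame $F$ has no zero vectors --- this is harmless, since a zero frame element transmits no coefficient and contributes $0$ to every error operator $E_{\Lambda,(F,G)}$, and it is precisely what makes the denominator $q_{i}\|f_{i}\|$ in the bound strictly positive.
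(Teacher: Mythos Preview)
Your argument is correct and, in fact, cleaner than the paper's. Both proofs rest on the same Heine--Borel idea in the finite-dimensional ambient space $\mathcal{H}^{N}$, but the paper proceeds indirectly: it introduces the continuous functional $\sharp(G)=\mathcal{A}_{P}^{(1)}(F,G)$, asserts (without justification) that the sublevel set $\sharp^{-1}([0,\gamma])$ is compact where $\gamma=\sharp(S_{F}^{-1}F)$, extracts a minimizer, and then realizes $\Delta_{F}^{(1)}$ as the closed preimage $\sharp^{-1}(\beta)$ inside that compact set. Your route is more direct: you invoke the already-proved closedness proposition and supply the explicit bound $\|g_{i}\|\le 2\mathcal{A}_{P}^{(1)}(F)/(q_{i}\|f_{i}\|)$, which is exactly the step the paper skips when it declares $\sharp^{-1}([0,\gamma])$ compact. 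Your additional remark on nonemptiness via a minimizing sequence is also something the paper handles only implicitly. The caveat you flag about zero frame vectors is genuine and applies equally to the paper's argument; under the natural convention that every $f_{i}\neq 0$, both proofs go through.
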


\begin{proof}
Let $\zeta := \left\{\text{collection of all dual frames of $F$ in } \mathcal{H}  \right\}.$\\
This can be easily seen that $\zeta$ is a complete subspace of  $\mathcal{H}.$\\
Let us define a map 
\begin{eqnarray*}
& \sharp : \zeta \rightarrow \mathbb{R}^{+} \cup \{0\}\;\;\; \text{by}\\&
   \sharp (\{S_{F}^{-1}f_i +h_i\}_{i=1}^N) =  \mathcal{A}_{P}^{(1)}(F, \{S_{F}^{-1}f_i +h_i\}_{i=1}^N ) =  \textit{max}_{i=1}^N \left\{ \frac{{q_i | \langle f_i,S_{F}^{-1}f_i +h_i \rangle | + q_i \|f_i\|\; \|S_{F}^{-1}f_i +h_i\|}}{2} \right\}
\end{eqnarray*}
Therefore, $\sharp$ is a continuous function.\\
Let $\gamma = \sharp\left(\{ S_{F}^{-1}f_i \}_{i=1}^N \right) = \textit{max}_{i=1}^N \left\{ \dfrac{{q_i | \langle f_i,S_{F}^{-1}f_i\rangle | + q_i \|f_i\|\; \|S_{F}^{-1}f_i \|}}{2} \right\} < \infty$\\
Now, consider the interval $[0,\gamma].\; \sharp^{-1}\left([0,\gamma]\right)$  is a compact subset of $\mathcal{H}^{(N)}.$ Therefore it attains maximum and minimum value i.e there exists a $G \in \sharp^{-1}\left([0,\gamma]\right)$ which attain the minimum value say $\beta.$ Now, $ \Delta_{F}^{(1)} = \sharp^{-1}(\beta) \subset \sharp^{-1}\left([0,\gamma]\right),$ be a non-empty set. Hence, $\Delta_{F}^{(1)}$ is a compact set.   \hfill{$\square$}
\end{proof}

\noindent
The image of a dual pair under any unitary operator again forms a dual pair. The following theorem proves that the image of an 1-erasure PASOD-frame of $F$ is also an 1-erasure PASOD-frame of the unitary image of $F.$  
\begin{thm}
  Let $F = \{f_i\}_{i=1}^N $ be a frame for $\mathcal{H}$ and  $\{q_i\}_{i=1}^N$ be a weight number sequence given by  (\ref{eqn2point2}) . $U$ be a unitary operator in $\mathcal{H}.$ Then for any dual $G \in \Delta_{F}^{(1)}$ if and only if $UG \in \Delta_{UF}^{(1)}.$
\end{thm}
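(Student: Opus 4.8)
The plan is to exploit the fact that conjugation by a unitary operator $U$ leaves all the quantities appearing in Proposition~3.2 invariant. First I would observe that if $G=\{g_i\}_{i=1}^N$ is a dual of $F=\{f_i\}_{i=1}^N$, then $UG=\{Ug_i\}_{i=1}^N$ is a dual of $UF=\{Uf_i\}_{i=1}^N$: this follows because $\sum_i\langle f,Uf_i\rangle Ug_i = U\sum_i\langle U^*f,f_i\rangle g_i = U U^*f = f$, and symmetrically the other reconstruction identity holds. Moreover this correspondence $G\mapsto UG$ is a bijection from the set of duals of $F$ onto the set of duals of $UF$, with inverse $G\mapsto U^*G$; this bijectivity is what will let us convert the "only if" into the "if" direction (and conversely) by symmetry, so it suffices to prove one implication for an arbitrary unitary.

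Next I would compute the effect of $U$ on the one-erasure error quantities. Fix an index $i$. The error operator for $UF$ and $UG$ at position $i$ is $f\mapsto q_i\langle f,Uf_i\rangle Ug_i = U\big(q_i\langle U^*f,f_i\rangle g_i\big) = U E_{\{i\},(F,G)} U^* f$, i.e. the error operator for the unitary image is the unitary conjugate of the original error operator. Since operator norm and spectral radius are both invariant under unitary conjugation, $\|UE U^*\|=\|E\|$ and $\rho(UEU^*)=\rho(E)$; alternatively, one reads this directly off Proposition~3.2 using $\langle Uf_i,Ug_i\rangle=\langle f_i,g_i\rangle$, $\|Uf_i\|=\|f_i\|$, $\|Ug_i\|=\|g_i\|$. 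Also the weight numbers $q_i$ in~(\ref{eqn2point2}) depend only on the probability sequence and on $n$, $N$, not on the frame vectors themselves, so they are the same for $F$ and $UF$. Hence, taking the maximum over $i$, we get $\mathcal{A}_P^{(1)}(UF,UG)=\mathcal{A}_P^{(1)}(F,G)$ for every dual $G$ of $F$.

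Finally I would take the infimum over all duals. Using the bijection $G\leftrightarrow UG$ between duals of $F$ and duals of $UF$ together with the identity $\mathcal{A}_P^{(1)}(UF,UG)=\mathcal{A}_P^{(1)}(F,G)$, we obtain
\begin{align*}
\mathcal{A}_P^{(1)}(UF) &= \inf\{\mathcal{A}_P^{(1)}(UF,H): H \text{ is a dual of } UF\}\\
&= \inf\{\mathcal{A}_P^{(1)}(UF,UG): G \text{ is a dual of } F\}\\
&= \inf\{\mathcal{A}_P^{(1)}(F,G): G \text{ is a dual of } F\} = \mathcal{A}_P^{(1)}(F).
\end{align*}
Therefore $G\in\Delta_F^{(1)}$, i.e. $\mathcal{A}_P^{(1)}(F,G)=\mathcal{A}_P^{(1)}(F)$, holds if and only if $\mathcal{A}_P^{(1)}(UF,UG)=\mathcal{A}_P^{(1)}(UF)$, i.e. $UG\in\Delta_{UF}^{(1)}$. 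The converse implication also follows by applying the same argument with the unitary $U^*$ in place of $U$ and the frame $UF$ in place of $F$, since $U^*(UF)=F$ and $U^*(UG)=G$.

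There is no real obstacle here; the only point that requires a little care is checking that $UG$ is genuinely a dual of $UF$ (both reconstruction identities) and that the map $G\mapsto UG$ is onto the set of all duals of $UF$ — without surjectivity one cannot pass the infimum back and forth. The invariance of $q_i$ under $U$, and of $\|\cdot\|$ and $\rho(\cdot)$ under unitary conjugation, are the routine ingredients that make the equality of the $\mathcal{A}_P^{(1)}$ values immediate once Proposition~3.2 is invoked.
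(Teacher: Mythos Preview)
Your proof is correct and follows essentially the same approach as the paper: both rely on the identities $\langle Uf_i,Ug_i\rangle=\langle f_i,g_i\rangle$, $\|Uf_i\|=\|f_i\|$, $\|Ug_i\|=\|g_i\|$ to get $\mathcal{A}_P^{(1)}(UF,UG)=\mathcal{A}_P^{(1)}(F,G)$, together with the fact that every dual of $UF$ arises as $UG$ for some dual $G$ of $F$ (equivalently, $U^*G'$ is a dual of $F$ whenever $G'$ is a dual of $UF$). The paper phrases this last point as a direct comparison with an arbitrary competitor $G'$, whereas you package it as a bijection plus an infimum computation, but the content is the same.
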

 \begin{proof}
 Since $U$ is a unitary operator in $\mathcal{H},$ then $UF = \{Uf_i\}_{i=1}^N$ is a frame for $\mathcal{H}$ with dual frame $UG =  \{Ug_i\}_{i=1}^N.$\\
 Let $G \in \Delta_{F}^{(1)}$ and $G'= \{g'_i\}_{i=1}^N$ be a dual of $UF$ in $\mathcal{H}.$ Then $U^{*}G'$ is a dual of $F$ in $\mathcal{H}.$\\
 Therefore,
 \begin{align*}
     \mathcal{A}_{P}^{(1)}(UF,UG) =& \textit{max}_{i=1}^N \left\{ \dfrac{{q_i| \langle Uf_i,Ug_i\rangle | + q_i\|Uf_i\|\; \|Ug_i \|}}{2} \right\} \\&= \textit{max}_{i=1}^N \left\{ q_i\dfrac{{| \langle f_i,g_i\rangle | + \|f_i\|\; \|g_i \|}}{2} \right\} \\ &\leq \mathcal{A}_{P}^{(1)}(F,U^{*}G') \\ &= \textit{max}_{i=1}^N \left\{ q_i \dfrac{{| \langle f_i,U^*g'_i\rangle | + \|f_i\|\; \|U^*g'_i \|}}{2} \right\} \\&= \textit{max}_{i=1}^N \left\{ q_i\dfrac{{| \langle Uf_i,g'_i\rangle | + \|Uf_i\|\; \|g'_i \|}}{2} \right\} \\&= \mathcal{A}_{P}^{(1)}(UF,G')
 \end{align*}
 Hence $UG \in \Delta_{UF}^{(1)}.$\\
 Conversely, if $UG \in \Delta_{UF}^{(1)}.$ Let $G''= \{g''_i\}_{i=1}^N$ be a dual of $F$ in $\mathcal{H}.$\\
 Then $UG''= \{Ug''_i\}_{i=1}^N $ be a dual of $UF$ in  $\mathcal{H}.$\\
 Therefore, 
 \begin{align*}
      \mathcal{A}_{P}^{(1)}(UF) &=  \mathcal{A}_{P}^{(1)}(UF,UG) =  \textit{max}_{i=1}^N \left\{ q_i\dfrac{{| \langle Uf_i,Ug_i\rangle | + \|Uf_i\|\; \|Ug_i \|}}{2} \right\} \leq \textit{max}_{i=1}^N \left\{ q_i \dfrac{{| \langle Uf_i,Ug''_i\rangle | + \|Uf_i\|\; \|Ug''_i \|}}{2} \right\}
 \end{align*}
 i.e., $$\textit{max}_{i=1}^N \left\{ q_i\dfrac{{| \langle f_i,g_i\rangle | + \|f_i\|\; \|g_i \|}}{2} \right\} \leq \textit{max}_{i=1}^N \left\{ q_i\dfrac{{| \langle f_i,g''_i\rangle | + \|f_i\|\; \|g''_i \|}}{2} \right\}$$
 Therefore, $\mathcal{A}_{P}^{(1)}(F,G) \leq \mathcal{A}_{P}^{(1)}(F,G''),$ for any dual $G''$ of $F$ in $\mathcal{H}.$ Hence $ G \in \Delta_{F}^{(1)}.$      
 \end{proof}   \hfill{$\square$}  
 
 The following theorem establish a relation between POD and PASOD for 1-erasure for a given tight frame $F.$ Note that this may not true for any general frame.
 
 \begin{thm}\label{thm3point4}
  Let $F = \{f_i\}_{i=1}^N $ be a tight frame for $\mathcal{H}$ and  $\{q_i\}_{i=1}^N$ be a weight number sequence given by  (\ref{eqn2point2}). Then $S_{F}^{-1} F$ is an 1-erasure POD of $F$ if and only if $S_{F}^{-1} F \in \Delta_{F}^{(1)}.$ 
 \end{thm}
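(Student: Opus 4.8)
The plan is to analyze both optimality conditions through the explicit formulas they induce on the coefficients $\langle f_i, g_i\rangle$ and the norms $\|g_i\|$, exploiting that $F$ is tight so that $S_F = A\,I$ and the canonical dual is $S_F^{-1}F = \{A^{-1}f_i\}_{i=1}^N$. For the canonical dual one computes $\langle f_i, A^{-1}f_i\rangle = A^{-1}\|f_i\|^2$ and $\|A^{-1}f_i\| = A^{-1}\|f_i\|$, so both the POD quantity $\mathcal{O}_P^{(1)}(F, S_F^{-1}F) = \max_i q_i A^{-1}\|f_i\|^2$ and the PASOD quantity $\mathcal{A}_P^{(1)}(F, S_F^{-1}F) = \max_i q_i A^{-1}\|f_i\|^2$ coincide and are attained on exactly the same index set, namely those $i$ maximizing $q_i\|f_i\|^2$. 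This observation — that on the canonical dual of a tight frame the spectral radius equals the operator norm for each single-erasure error operator — is the conceptual heart of the equivalence.

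Next I would set up both directions around this identity. Suppose $S_F^{-1}F$ is a $1$-erasure POD; then for every dual $G'$ we have $\max_i q_i\|f_i\|\|g_i'\| \geq \max_i q_i A^{-1}\|f_i\|^2$ (using $\|E_\Lambda\| = q_i\|f_i\|\|g_i'\|$). Since $|\langle f_i, g_i'\rangle| \leq \|f_i\|\|g_i'\|$ always, we get
$$\mathcal{A}_P^{(1)}(F,G') = \max_i \frac{q_i\big(|\langle f_i,g_i'\rangle| + \|f_i\|\|g_i'\|\big)}{2} \leq \max_i q_i\|f_i\|\|g_i'\|$$
is not immediately what we want — instead I would bound below: $\mathcal{A}_P^{(1)}(F,G') \geq \frac{1}{2}\max_i q_i\|f_i\|\|g_i'\| \geq \frac{1}{2}\max_i q_i A^{-1}\|f_i\|^2$, which is too weak. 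The correct route is to argue that at any index $i$ realizing the POD maximum for $G'$ one has $q_i\|f_i\|\|g_i'\|\geq \max_j q_j A^{-1}\|f_j\|^2$, and then observe that $\mathcal{A}_P^{(1)}(F, S_F^{-1}F) = \max_j q_j A^{-1}\|f_j\|^2 \leq q_i\|f_i\|\|g_i'\| \leq \frac{q_i(|\langle f_i,g_i'\rangle| + \|f_i\|\|g_i'\|)}{2}$ requires $|\langle f_i,g_i'\rangle| \geq \|f_i\|\|g_i'\|$, hence equality in Cauchy–Schwarz there; so one must instead use a Bessel-type / trace lower bound: from $\sum_i \langle g_i', f_i\rangle = n$ (real part sums to $n$) together with $\sum_i \tfrac{1}{q_i} = n$, a weighted averaging argument forces $\max_i q_i|\langle f_i,g_i'\rangle|$ — and thus $\mathcal{A}_P^{(1)}(F,G')$ — to be at least $\max_i q_i A^{-1}\|f_i\|^2$ when $F$ is tight. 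This reduces the claim to showing $\mathcal{A}_P^{(1)}(F) = \max_i q_i A^{-1}\|f_i\|^2$ under the POD hypothesis, and then the converse is symmetric.

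For the converse, if $S_F^{-1}F \in \Delta_F^{(1)}$, then $\mathcal{A}_P^{(1)}(F,G') \geq \mathcal{A}_P^{(1)}(F, S_F^{-1}F) = \max_i q_i A^{-1}\|f_i\|^2$ for all duals $G'$; since $\mathcal{A}_P^{(1)}(F,G') \leq \mathcal{O}_P^{(1)}(F,G')$ always (because $\rho(E_\Lambda) \leq \|E_\Lambda\|$, so the average is dominated by the norm), we get $\mathcal{O}_P^{(1)}(F,G') \geq \max_i q_i A^{-1}\|f_i\|^2 = \mathcal{O}_P^{(1)}(F, S_F^{-1}F)$, which is exactly POD-optimality of the canonical dual. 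The main obstacle, and the step I would spend the most care on, is the forward direction: establishing that POD-optimality of $S_F^{-1}F$ forces $\mathcal{A}_P^{(1)}(F) = \max_i q_i A^{-1}\|f_i\|^2$ rather than something strictly smaller — this needs the tightness hypothesis in an essential way (via the trace identity $\sum \langle g_i',f_i\rangle = n$ and the weight identity $\sum 1/q_i = n$) and is precisely where the theorem can fail for a general frame, as the remark before the statement warns. I would likely isolate this as: for a tight frame, $\max_i q_i|\langle f_i,g_i'\rangle| \geq A^{-1}\max_i q_i\|f_i\|^2$ for every dual $G'$, proved by a pigeonhole/weighted-sum comparison, after which both equivalences close immediately.
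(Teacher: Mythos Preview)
Your converse direction (PASOD $\Rightarrow$ POD) is correct and is exactly what the paper does: for any dual $G'$ one has
\[
\mathcal{O}_P^{(1)}(F,S_F^{-1}F)=\max_i \tfrac{q_i}{A}\|f_i\|^2=\mathcal{A}_P^{(1)}(F,S_F^{-1}F)\le \mathcal{A}_P^{(1)}(F,G')\le \mathcal{O}_P^{(1)}(F,G'),
\]
using $\rho(E_\Lambda)\le\|E_\Lambda\|$ on the right.

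The forward direction (POD $\Rightarrow$ PASOD), however, has a genuine gap. Your final proposed lemma --- that for a tight frame one has $\max_i q_i|\langle f_i,g_i'\rangle|\ge A^{-1}\max_i q_i\|f_i\|^2$ for \emph{every} dual $G'$ --- is simply false, and no pigeonhole argument from the trace identities can give it. The identities $\sum_i\langle g_i',f_i\rangle=n$ and $\sum_i 1/q_i=n$ only yield $\max_i q_i|\langle f_i,g_i'\rangle|\ge 1$, whereas $A^{-1}\max_i q_i\|f_i\|^2$ can be strictly larger than $1$ whenever the numbers $q_i\|f_i\|^2$ are not all equal. A concrete counterexample: in $\mathbb{R}^2$ take $f_1=f_2=(1,0)$, $f_3=f_4=(0,1)$ (tight, $A=2$) with $p_1=0.4$, $p_2=p_3=p_4=0.2$; then $A^{-1}\max_i q_i\|f_i\|^2=5/4$, but the dual $g_1'=(0.4,0)$, $g_2'=(0.6,0)$, $g_3'=g_4'=(0,0.5)$ gives $\max_i q_i|\langle f_i,g_i'\rangle|=9/8<5/4$. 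Note that in this example the canonical dual is \emph{not} POD either, so the theorem is not violated --- but your lemma is stated without the POD hypothesis, and the proposed pigeonhole proof makes no use of it.

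The paper's forward direction is genuinely different from what you attempt: it argues by contradiction via a perturbation. Assume the canonical dual is POD but there exists a dual $G=\{A^{-1}f_i+h_i\}$ with $\mathcal{A}_P^{(1)}(F,G)<\mathcal{A}_P^{(1)}(F,S_F^{-1}F)=\max_i\tfrac{q_i}{A}\|f_i\|^2$. Split indices into $\Lambda_1=\{i:\tfrac{q_i}{A}\|f_i\|^2=\max\}$ and $\Lambda_2$ the rest. For $i\in\Lambda_1$ the strict inequality forces $\operatorname{Re}\langle f_i,h_i\rangle<0$, so for small $\epsilon>0$ one gets $q_i\|f_i\|\,\|A^{-1}f_i+\epsilon h_i\|<\tfrac{q_i}{A}\|f_i\|^2$; for $i\in\Lambda_2$ the same holds by continuity since $\tfrac{q_i}{A}\|f_i\|^2$ is strictly below the max. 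Hence the dual $G_\epsilon=\{A^{-1}f_i+\epsilon h_i\}$ satisfies $\mathcal{O}_P^{(1)}(F,G_\epsilon)<\mathcal{O}_P^{(1)}(F,S_F^{-1}F)$, contradicting POD-optimality. This is where the POD hypothesis actually enters, and it is the step your proposal is missing.
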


\begin{proof}
Let $F$ be a tight frame with tight bound $A.$ \\
First consider the canonical dual $S_{F}^{-1} F = \{\frac{1}{A} f_{i}\}_{i=1}^N \in \Delta_{F}^{(1)}.$ \\
Let $G= \{g_i\}_{i=1}^N$ be any dual of $F.$ Then,
\begin{align*}
   \mathcal{O}_P^{(1)}(F,S_{F}^{-1} F)= max_{i=1}^N q_i\|f_i\| \left\|\tfrac{1}{A} f_{i}\right\| &= max_{i=1}^N \frac{q_i|\langle f_i,\tfrac{1}{A} f_{i} \rangle | + q_i\|f_i\| \left\|\tfrac{1}{A} f_{i}\right\| }{2} \\&= \mathcal{A}_{P}^{(1)}(F,\tfrac{1}{A}F)  \\&\leq \mathcal{A}_{P}^{(1)}(F,G)  \\&=  max_{i=1}^N \frac{q_i|\langle f_i,g_i \rangle | + q_i\|f_i\|\; \| g_{i}\| }{2} \\&\leq  max_{i=1}^N \frac{q_i\|f_i\|\;\|g_i\| + q_i\|f_i\|\; \| g_{i}\| }{2} \\&= \mathcal{O}_P^{(1)}(F,G)
\end{align*}
Hence $S_{F}^{-1} F $ is an  1-erasure probabilistic optimal dual of $F.$\\
\noindent
For the converse part, let $G=\left\{ \tfrac{1}{A} f_{i} + h_i \right\}_{i=1}^N$ be any non-canonical dual of $F$ such that $G \in \Delta_{F}^{(1)}$ and  $S_{F}^{-1} F  \notin  \Delta_{F}^{(1)}.$\\
Thus,
\begin{align} \label{eqn3point3}
  \mathcal{A}_{P}^{(1)}(F,G)&=  max_{i=1}^N \frac{q_i|\langle f_i,\tfrac{1}{A} f_{i} + h_i \rangle | + q_i\|f_i\|\; \left\|\tfrac{1}{A} f_{i} +h_i\right\| }{2} \nonumber\\&=  max_{i=1}^N \frac{q_i\left| \tfrac{1}{A} \|f_{i}\|^2 + \langle f_i, h_i\rangle \right| + q_i\sqrt{\frac{1}{A^2}\|f_i\|^4 + \|f_i\|^2\|h_i\|^2 + \frac{2}{A}\|f_i\|^2 Re( \langle f_i, h_i\rangle)} }{2} \nonumber \\&<  \mathcal{A}_{P}^{(1)}(F,S_{F}^{-1}F) \nonumber\\&= max_{i=1}^N \;\;\tfrac{q_i}{A} \|f_i\|^2. 
\end{align}
Now, let us consider $\Lambda_{1} = \left\{ i :  \frac{q_i}{A}\|f_i\|^2 = max_{i=1}^N  \frac{q_i}{A}\|f_i\|^2 \right\}$  and $\Lambda_{2} = \{1,2,...,N\} \setminus \Lambda_{1}. $\\
From equation (\ref{eqn3point3}), for all $i \in \Lambda_{1}, \text{Re}(\langle f_i, h_i\rangle) < 0.$ Therefore, for all  $i \in \Lambda_{1},$ we can choose $\epsilon_{1}^{i} > 0,$ small enough such that 
\begin{align*}
    q_i\|f_i\|\;\|\tfrac{1}{A}f_i + \epsilon_{1}^{i}h_i \| =q_i\sqrt{\frac{1}{A^2}\|f_i\|^4 + (\epsilon_{1}^{i})^2\|f_i\|^2\|h_i\|^2 + \frac{2}{A}\epsilon_{1}^{i}\|f_i\|^2 Re( \langle f_i, h_i\rangle)} \;\;<\; q_i\frac{1}{A}\|f_i\|^2 = max_{i=1}^N \frac{q_i}{A}\|f_i\|^2 .
\end{align*}
Now, for all $j \in \Lambda_{2},\;\; \frac{q_j}{A}\|f_j\|^2 < max_{i=1}^N  \frac{q_i}{A}\|f_i\|^2 .$ \\
Therefore,  for all $j \in \Lambda_{2},$ there exists  $\epsilon_{2}^{j} > 0,$ small enough such that 
\small
\begin{align*}
   q_j \|f_j\|\;\|\tfrac{1}{A}f_j + \epsilon_{2}^{j}h_j \| =q_j\sqrt{\frac{1}{A^2}\|f_j\|^4 + (\epsilon_{2}^{j})^2\|f_j\|^2\|h_j\|^2 + \frac{2}{A}\epsilon_{2}^{j}\|f_i\|^2 Re( \langle f_j, h_j \rangle)} \;\;<\; q_j\frac{1}{A}\|f_i\|^2 < max_{i=1}^N \frac{q_i}{A}\|f_i\|^2 .
\end{align*}
\normalsize
Take $\epsilon = \displaystyle{min_{i \in \Lambda_{1},j \in \Lambda_{2}}} \;\left\{\epsilon_{1}^{i}, \epsilon_{2}^{j} \right\}.$\\ 
Thence, $max_{i=1}^N  q_i \|f_i\| \; \left\|\tfrac{1}{A}f_i + \epsilon h_i \right\| \;<\;  max_{i=1}^N \frac{q_i}{A}\|f_i\|^2.$\\
As a consequence, for the dual $G_{\epsilon} = \left\{\frac{1}{A}f_i + \epsilon h_i    \right\}_{i=1}^N,\;\;   \mathcal{O}_P^{(1)}(F,G_{\epsilon}) <   \mathcal{O}_P^{(1)}(F,S_{F}^{-1} F)$ holds.\\
This gives a contradiction that  $S_{F}^{-1} F $ is an 1-erasure POD-frame of $F.$ Therefore our assumption that $S_{F}^{-1} F  \notin  \Delta_{F}^{(1)}$ is not true. Hence, $S_{F}^{-1} F  \in  \Delta_{F}^{(1)}.$ 
 \end{proof} \hfill{$\square$}

\noindent 
Now we will give an equivalent condition between 1-erasure PSOD and PASOD for frame $F,$ when $F$ is tight. This may not be true for any general frame.
\begin{thm}
Let $F = \{f_i\}_{i=1}^N $ be a tight frame for $\mathcal{H}$ and  $\{q_i\}_{i=1}^N$ be a weight number sequence given by  (\ref{eqn2point2}). Then the canonical dual $S_{F}^{-1} F$ be an 1-erasure PSOD-frame of $F$ if and only if $S_{F}^{-1} F  \in \Delta_{F}^{(1)}.$ 
\end{thm}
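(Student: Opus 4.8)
The plan is to follow the template of Theorem~\ref{thm3point4}, with the operator-norm functional $\mathcal{O}_P^{(1)}$ replaced throughout by the spectral-radius functional $r_P^{(1)}$. The pivotal observation is that, since $F$ is tight with bound $A$, the canonical dual is $S_F^{-1}F=\{\tfrac1A f_i\}_{i=1}^N$, so the $i$-th $1$-erasure error operator $f\mapsto \tfrac{q_i}{A}\langle f,f_i\rangle f_i$ is a nonnegative rank-one operator whose spectral radius, operator norm and (hence) averaged error all equal $\tfrac{q_i}{A}\|f_i\|^2$. Consequently
$$r_P^{(1)}(F,S_F^{-1}F)=\mathcal{A}_P^{(1)}(F,S_F^{-1}F)=\max_{i=1}^N \tfrac{q_i}{A}\|f_i\|^2=:M.$$

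For the forward implication I would use the elementary inequality $\rho(E)\le \tfrac12(\|E\|+\rho(E))$, which gives $r_P^{(1)}(F,G)\le \mathcal{A}_P^{(1)}(F,G)$ for \emph{every} dual $G$ of $F$. If $S_F^{-1}F$ is $1$-erasure PSOD, then $r_P^{(1)}(F,S_F^{-1}F)\le r_P^{(1)}(F,G)$ for all such $G$, and combining this with the displayed equality yields $\mathcal{A}_P^{(1)}(F,S_F^{-1}F)=r_P^{(1)}(F,S_F^{-1}F)\le r_P^{(1)}(F,G)\le \mathcal{A}_P^{(1)}(F,G)$; taking the infimum over $G$ shows $\mathcal{A}_P^{(1)}(F,S_F^{-1}F)=\mathcal{A}_P^{(1)}(F)$, i.e. $S_F^{-1}F\in\Delta_F^{(1)}$.

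For the converse I would argue by contradiction exactly as in Theorem~\ref{thm3point4}. Assume $S_F^{-1}F\in\Delta_F^{(1)}$ but that it fails to be PSOD; then some dual $G=\{\tfrac1A f_i+h_i\}_{i=1}^N$ satisfies $r_P^{(1)}(F,G)=\max_i q_i\big|\tfrac1A\|f_i\|^2+\langle f_i,h_i\rangle\big|<M$. Writing $\Lambda_1=\{i:\tfrac{q_i}{A}\|f_i\|^2=M\}$ and $\Lambda_2=\{1,\dots,N\}\setminus\Lambda_1$, dividing by $q_i>0$ forces $\big|\tfrac1A\|f_i\|^2+\langle f_i,h_i\rangle\big|<\tfrac1A\|f_i\|^2$, hence $\mathrm{Re}\,\langle f_i,h_i\rangle<0$, for every $i\in\Lambda_1$ — precisely the sign condition that drove the perturbation argument there. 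Then, for small $\epsilon>0$, I would form $G_\epsilon=\{\tfrac1A f_i+\epsilon h_i\}_{i=1}^N$ and check $\mathcal{A}_P^{(1)}(F,G_\epsilon)<M$: for $i\in\Lambda_1$ both summands $q_i\big|\tfrac1A\|f_i\|^2+\epsilon\langle f_i,h_i\rangle\big|$ and $q_i\|f_i\|\,\|\tfrac1A f_i+\epsilon h_i\|$ drop strictly below $M$ for small $\epsilon$ (each, as a function of $\epsilon$, has strictly negative derivative at $\epsilon=0$ because $\mathrm{Re}\,\langle f_i,h_i\rangle<0$), while for $j\in\Lambda_2$ continuity makes the corresponding average tend to $\tfrac{q_j}{A}\|f_j\|^2<M$ as $\epsilon\to0$. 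Choosing $\epsilon$ below all finitely many resulting thresholds gives $\mathcal{A}_P^{(1)}(F,G_\epsilon)<M=\mathcal{A}_P^{(1)}(F,S_F^{-1}F)$, contradicting $S_F^{-1}F\in\Delta_F^{(1)}$; hence $S_F^{-1}F$ must be PSOD.

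The continuity and derivative bookkeeping for the $\epsilon$-perturbation is routine. The only ingredient not already present in Theorem~\ref{thm3point4} is that here the spectral-radius summand $q_i\big|\tfrac1A\|f_i\|^2+\epsilon\langle f_i,h_i\rangle\big|$, and not merely the operator-norm summand, must be pushed below $M$; but since $\frac{d}{d\epsilon}\big|_{\epsilon=0}\big|\tfrac1A\|f_i\|^2+\epsilon\langle f_i,h_i\rangle\big|^2=\tfrac2A\|f_i\|^2\,\mathrm{Re}\,\langle f_i,h_i\rangle<0$, the same sign condition handles it, so I expect no real obstacle beyond this small extra estimate and the usual care in picking $\epsilon$ uniformly over $\Lambda_1\cup\Lambda_2$.
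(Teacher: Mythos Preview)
Your proposal is correct and matches the paper's proof essentially line for line: the chain $\mathcal{A}_P^{(1)}(F,S_F^{-1}F)=r_P^{(1)}(F,S_F^{-1}F)\le r_P^{(1)}(F,G)\le \mathcal{A}_P^{(1)}(F,G)$ is exactly the paper's argument for PSOD $\Rightarrow$ PASOD, and the $\epsilon$-perturbation contradiction (splitting into $\Lambda_1,\Lambda_2$ and using $\mathrm{Re}\,\langle f_i,h_i\rangle<0$ on $\Lambda_1$) is exactly the paper's argument for the other direction. The only cosmetic differences are that the paper treats the two implications in the opposite order, and it bounds the averaged term on $\Lambda_1$ by Cauchy--Schwarz ($\tfrac12(|\langle\cdot\rangle|+\|\cdot\|\|\cdot\|)\le \|\cdot\|\|\cdot\|$) rather than handling the spectral-radius summand separately as you do.
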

\begin{proof}
Let $F$ be a tight frame with tight bound $A.$ \\
First suppose that, $S_{F}^{-1} F \in \Delta_{F}^{(1)}. $  \\ 
Let $G = \left\{\frac{1}{A}f_i +h_i\right\}_{i=1}^N$ be a non-canonical dual of $F$ such that $r_{P}^{(1)}(F,G) < r_{P}^{(1)}(F, S_{F}^{-1} F).$\\
\noindent
Let us consider $\Lambda_{1} = \left\{ i :  \frac{q_i}{A}\|f_i\|^2 = max_{i=1}^N  \frac{q_i}{A}\|f_i\|^2 \right\}$  and $\Lambda_{2} = \{1,2,...,N\} \setminus \Lambda_{1}$. \\
Thus, $max_{i=1}^N \;\; q_i |\langle f_i,\tfrac{1}{A} f_{i} +h_i \rangle| < max_{i=1}^N \;\; q_i |\langle f_i,\tfrac{1}{A} f_{i} \rangle| .$ This implies $ q_i |\langle f_i,\tfrac{1}{A} f_{i} +h_i \rangle| <  q_i |\langle f_i,\tfrac{1}{A} f_{i} \rangle|, $ for all $ i \in \Lambda_1.$ Therefore, for all $i \in \Lambda_1,\;\; \left| \tfrac{q_i}{A} \|f_i\|^2 + q_i \langle f_i , h_i \rangle  \right| <  \tfrac{q_i}{A} \|f_i\|^2. $ Consequently, $Re(\langle f_i , h_i \rangle) < 0,$\;for all $i \in \Lambda_1.$ Thus for all $i \in \Lambda_1,$ there exists $\epsilon_1 > 0$ \; small enough such that
\begin{align*}
   \frac{q_i |\langle f_i,\tfrac{1}{A} f_{i} + \epsilon_1 h_i\rangle | + q_i \|f_i\|\; \left\|\tfrac{1}{A} f_{i} + \epsilon_1 h_i\right\| }{2} &\leq q_i \|f_i\|\; \left\|\tfrac{1}{A} f_{i} + \epsilon_1 h_i\right\| \\&= \sqrt{\frac{q_i^2}{A^2}\|f_i\|^4 + q_i^2 \epsilon_1^{2}\|f_i\|^2\;\|h_i\|^2 + 2\frac{\epsilon_1}{A}q_i^2 \|f_i\|^2 Re\langle f_i , h_i \rangle} \\&< \frac{q_i}{A}\|f_i\|^2  .
\end{align*}
For all $i \in \Lambda_2, \;\; \frac{q_i}{A}\|f_i\|^2 < max_{i=1}^N \frac{q_i}{A}\|f_i\|^2.$ Therefore, for all $i \in \Lambda_2,$ there exists $\epsilon_2 > 0$ \; small enough such that
\begin{align*}
    \frac{q_i |\langle f_i,\tfrac{1}{A} f_{i} + \epsilon_2 h_i\rangle | + q_i \|f_i\|\; \left\|\tfrac{1}{A} f_{i} + \epsilon_2 h_i\right\| }{2} &\leq q_i \|f_i\|\; \left\|\tfrac{1}{A} f_{i} + \epsilon_2 h_i\right\| \\&= \sqrt{\frac{q_i^2}{A^2}\|f_i\|^4 + q_i^2 \epsilon_2^{2}\|f_i\|^2\;\|h_i\|^2 + 2\frac{\epsilon_2}{A}q_i^2 \|f_i\|^2 re\langle f_i , h_i \rangle} \\& < max_{i=1}^N \;\frac{q_i}{A}\|f_i\|^2 .
\end{align*}
Take $\epsilon = min\{\epsilon_1, \epsilon_2 \}. $\\
Therefore, $max_{i=1}^N \;\;\dfrac{q_i \left|\langle f_i,\tfrac{1}{A} f_{i} + \epsilon_1 h_i\rangle \right| + q_i \|f_i\|\;\left\|\tfrac{1}{A} f_{i} + \epsilon_1 h_i\right\| }{2} < max_{i=1}^N \;\frac{q_i}{A}\|f_i\|^2  ,\;\;$ for all $1 \leq i \leq N.$
 Consequently, for the dual $G_{\epsilon}= \left\{\frac{1}{A}f_i + \epsilon h_i\right\}_{i=1}^N ,\;\; \mathcal{A}_{P}^{(1)}(F,G_{\epsilon}) < \mathcal{A}_{P}^{(1)}(F,S_{F}^{-1}F) .$
This gives a contradiction that $S_{F}^{-1} F  \in \Delta_{F}^{(1)}$. Therefore, $S_{F}^{-1} F$ is an 1-erasure PSOD of $F.$ \\

\noindent
For the converse part,  for any dual  $G = \left\{\frac{1}{A}f_i +h_i\right\}_{i=1}^N$ of $F$ in $\mathcal{H},$
\begin{align*}
   \mathcal{A}_{P}^{(1)}(F,G) &= max_{i=1}^N \frac{q_i \left|\langle f_i,\tfrac{1}{A} f_{i} + h_i\rangle \right| + q_i \|f_i\|\; \left\|\tfrac{1}{A} f_{i} +h_i\right\| }{2} \\& \geq  max_{i=1}^N \;q_i |\langle f_i,\tfrac{1}{A} f_{i} + h_i\rangle | \\&=  r_{P}^{(1)}(F,G) \\&\geq  r_{P}^{(1)}(F,S_{F}^{-1}F) \\&= max_{i=1}^N \frac{q_i}{A} \|f_i\|^2 \\&=  \mathcal{A}_{P}^{(1)}(F,S_{F}^{-1}F) 
\end{align*}
 Hence, $S_{F}^{-1}F \in \Delta_{F}^{(1)}.$
\end{proof}\hfill{$\square$}

\begin{thm} \label{thm3point5}
Let $F = \{f_i\}_{i=1}^N $ be a tight frame for $\mathcal{H}$  and  $\{q_i\}_{i=1}^N$ be a weight number sequence given by  (\ref{eqn2point2}).  Then $S_{F}^{-1} F$ is the unique probabilistic 1-erasure optimal dual of $F$ if and only if $S_{F}^{-1} F $ is the unique 1-erasure PASOD-frame of $F.$ 
\end{thm}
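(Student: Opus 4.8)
The plan rests on three elementary observations about a tight frame $F$ with tight bound $A$. First, its canonical dual is $S_F^{-1}F=\{\tfrac1A f_i\}_{i=1}^N$, for which $|\langle f_i,\tfrac1A f_i\rangle|=\|f_i\|\,\|\tfrac1A f_i\|=\tfrac1A\|f_i\|^2$, so $\mathcal{O}_P^{(1)}(F,S_F^{-1}F)=r_P^{(1)}(F,S_F^{-1}F)=\mathcal{A}_P^{(1)}(F,S_F^{-1}F)=M$, where $M:=\max_{i=1}^N\tfrac{q_i}{A}\|f_i\|^2$. Second, by the Cauchy--Schwarz inequality $|\langle f_i,g_i\rangle|\le\|f_i\|\,\|g_i\|$ for any dual $G=\{g_i\}_{i=1}^N$, whence (using the formula for $\mathcal{A}_P^{(1)}(F,G)$ proved above) $\mathcal{A}_P^{(1)}(F,G)\le\mathcal{O}_P^{(1)}(F,G)$. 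Third, by Theorem \ref{thm3point4}, $S_F^{-1}F$ is a $1$-erasure POD of $F$ if and only if $S_F^{-1}F\in\Delta_F^{(1)}$; when this holds, $\mathcal{O}_P^{(1)}(F)=\mathcal{A}_P^{(1)}(F)=M$, so every dual $G$ satisfies $\mathcal{O}_P^{(1)}(F,G)\ge M$ and $\mathcal{A}_P^{(1)}(F,G)\ge M$.

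For the ($\Leftarrow$) direction, assume $S_F^{-1}F$ is the unique PASOD-frame of $F$. Then $S_F^{-1}F\in\Delta_F^{(1)}$, so by Theorem \ref{thm3point4} it is a POD with $\mathcal{O}_P^{(1)}(F)=M$. If some dual $G\ne S_F^{-1}F$ were also a POD, then $q_i\|f_i\|\,\|g_i\|\le M$ for all $i$, hence $\tfrac{q_i}{2}\big(|\langle f_i,g_i\rangle|+\|f_i\|\,\|g_i\|\big)\le q_i\|f_i\|\,\|g_i\|\le M$ for all $i$, i.e. $\mathcal{A}_P^{(1)}(F,G)\le M=\mathcal{A}_P^{(1)}(F)$; since the reverse inequality is automatic, $G\in\Delta_F^{(1)}$, contradicting uniqueness. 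Thus $S_F^{-1}F$ is the unique POD.

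For the ($\Rightarrow$) direction I argue by contraposition. Suppose $S_F^{-1}F$ is not the unique PASOD-frame. If $S_F^{-1}F\notin\Delta_F^{(1)}$ then by Theorem \ref{thm3point4} it is not a POD, and we are done. Otherwise $S_F^{-1}F\in\Delta_F^{(1)}$ (hence a POD, $\mathcal{O}_P^{(1)}(F)=M$) and there is a dual $G=\{\tfrac1A f_i+h_i\}_{i=1}^N\ne S_F^{-1}F$ with $G\in\Delta_F^{(1)}$, so that $\tfrac{q_i}{2}\big(|\langle f_i,\tfrac1A f_i+h_i\rangle|+\|f_i\|\,\|\tfrac1A f_i+h_i\|\big)\le M$ for all $i$. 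Put $\Lambda_1=\{i:\tfrac{q_i}{A}\|f_i\|^2=M\}$ (a nonempty set, with $f_i\ne0$ for $i\in\Lambda_1$) and $\Lambda_2=\{1,\dots,N\}\setminus\Lambda_1$. The first step is a sign condition: for every $i\in\Lambda_1$, either $h_i=0$ or $\operatorname{Re}\langle f_i,h_i\rangle<0$. Indeed, if $h_i\ne0$ and $\operatorname{Re}\langle f_i,h_i\rangle\ge0$, then $\|\tfrac1A f_i+h_i\|^2=\tfrac1{A^2}\|f_i\|^2+\|h_i\|^2+\tfrac2A\operatorname{Re}\langle f_i,h_i\rangle$ gives $q_i\|f_i\|\,\|\tfrac1A f_i+h_i\|>\tfrac{q_i}{A}\|f_i\|^2=M$, while $q_i|\langle f_i,\tfrac1A f_i+h_i\rangle|\ge q_i\big(\tfrac1A\|f_i\|^2+\operatorname{Re}\langle f_i,h_i\rangle\big)\ge M$, so the $i$-th term $\tfrac{q_i}{2}\big(|\langle f_i,\tfrac1A f_i+h_i\rangle|+\|f_i\|\,\|\tfrac1A f_i+h_i\|\big)$ exceeds $M$ --- contradicting $G\in\Delta_F^{(1)}$. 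The second step is a perturbation towards the canonical dual: for $\epsilon>0$, $G_\epsilon:=\{\tfrac1A f_i+\epsilon h_i\}_{i=1}^N$ is again a dual of $F$, and by the sign condition $q_i\|f_i\|\,\|\tfrac1A f_i+\epsilon h_i\|\le M$ for every $i\in\Lambda_1$ as soon as $\epsilon\le\tfrac{-2\operatorname{Re}\langle f_i,h_i\rangle}{A\|h_i\|^2}$ (no restriction when $h_i=0$), while for $i\in\Lambda_2$ continuity of $\epsilon\mapsto q_i\|f_i\|\,\|\tfrac1A f_i+\epsilon h_i\|$ together with $\tfrac{q_i}{A}\|f_i\|^2<M$ yields the same bound for all sufficiently small $\epsilon$. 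Choosing $\epsilon_*\in(0,1]$ smaller than all these finitely many thresholds gives $\mathcal{O}_P^{(1)}(F,G_{\epsilon_*})\le M$; combined with $\mathcal{O}_P^{(1)}(F,G_{\epsilon_*})\ge\mathcal{O}_P^{(1)}(F)=M$ this forces $\mathcal{O}_P^{(1)}(F,G_{\epsilon_*})=M$, so $G_{\epsilon_*}$ is a POD, and $G_{\epsilon_*}\ne S_F^{-1}F$ since $\epsilon_*>0$ and $h_j\ne0$ for some $j$. Hence $S_F^{-1}F$ is not the unique POD, which is what we needed.

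The Cauchy--Schwarz steps and the quadratic expansion of $\|\tfrac1A f_i+\epsilon h_i\|^2$ are routine; the substantive point is the ($\Rightarrow$) direction, where optimality of $G$ only yields the non-strict bound $\mathcal{A}_P^{(1)}(F,G)=M$ (in contrast to the strict inequality exploited in Theorem \ref{thm3point4}), so one cannot immediately conclude that $G$ itself is a POD. The work is therefore in extracting the sign information $\operatorname{Re}\langle f_i,h_i\rangle\le0$ on $\Lambda_1$ and then sliding along the one-parameter family $G_\epsilon$ to manufacture a second optimal dual, keeping all $\Lambda_1$-terms at or below $M$ while shrinking all $\Lambda_2$-terms below $M$.
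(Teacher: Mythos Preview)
Your proof is correct and follows essentially the same route the paper intends: the paper's proof of Theorem~\ref{thm3point5} consists of the single sentence ``The proof is similar to theorem~\ref{thm3point4},'' and what you have written is precisely the natural adaptation of that argument to the uniqueness setting---Cauchy--Schwarz for the $(\Leftarrow)$ direction and the $\Lambda_1/\Lambda_2$ perturbation for the $(\Rightarrow)$ direction. You have also correctly isolated and handled the one genuine subtlety, namely that a second PASOD $G$ only gives $\mathcal{A}_P^{(1)}(F,G)=M$ rather than a strict inequality, which forces the weaker sign condition ``$h_i=0$ or $\operatorname{Re}\langle f_i,h_i\rangle<0$'' on $\Lambda_1$ and yields a second POD $G_{\epsilon_*}$ with $\mathcal{O}_P^{(1)}(F,G_{\epsilon_*})=M$ (not $<M$), contradicting uniqueness rather than optimality.
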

\begin{proof}
The proof is similar to theorem \ref{thm3point4}.
\end{proof}\hfill{$\square$}

\begin{cor}
 Let $F = \{f_i\}_{i=1}^N $ be a parseval frame for $\mathcal{H}$  and  $\{q_i\}_{i=1}^N$ be a weight number sequence given by  (\ref{eqn2point2}). Then following assertions are equivalent;
 \begin{enumerate}
    \item [{\em (i)}] The canonical dual is an 1-erasure POD of $F.$
    \item [{\em (ii)}] The canonical dual is an 1-erasure PSOD of $F.$
    \item [{\em (iii)}] The canonical dual is an 1-erasure PASOD of $F.$
\end{enumerate} 
\end{cor}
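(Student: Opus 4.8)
The plan is to note that a Parseval frame is in particular a tight frame --- one with frame bound $A=1$, so that $S_F=I$ and the canonical dual $S_F^{-1}F$ coincides with $F$ itself --- and then to invoke the two preceding tight-frame theorems, which between them already contain everything needed.

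First I would record the trivial translation of terminology: since $F$ is Parseval, the condition $S_F^{-1}F\in\Delta_F^{(1)}$ is, by definition, exactly assertion (iii), that the canonical dual is an $1$-erasure PASOD-frame of $F$. Next, Theorem \ref{thm3point4} applies (its hypothesis ``tight frame'' being satisfied with $A=1$) and gives that assertion (i), the canonical dual is an $1$-erasure POD of $F$, holds if and only if $S_F^{-1}F\in\Delta_F^{(1)}$; that is, (i) $\Leftrightarrow$ (iii). Likewise, the theorem proved just above Theorem \ref{thm3point5} --- the PSOD-frame analogue of Theorem \ref{thm3point4} --- gives that for a tight frame the canonical dual is an $1$-erasure PSOD-frame precisely when $S_F^{-1}F\in\Delta_F^{(1)}$, so (ii) $\Leftrightarrow$ (iii). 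Chaining the two biconditionals yields (i) $\Leftrightarrow$ (iii) $\Leftrightarrow$ (ii), which establishes the equivalence of (i), (ii) and (iii).

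I do not expect any real obstacle here: all of the analytic content --- the $\varepsilon$-perturbation arguments showing that a canonical dual lying outside $\Delta_F^{(1)}$ can be strictly improved in the POD (respectively PSOD) sense --- was already carried out in those two theorems, and the corollary amounts to checking that the Parseval hypothesis makes both applicable. For completeness one might instead give a direct argument: when $F$ is Parseval, $\langle f_i,f_i\rangle=\|f_i\|^2=\|f_i\|\,\|f_i\|$, so the spectral radius, the averaged quantity, and the operator norm of the $1$-erasure error operator at the canonical dual all equal $q_i\|f_i\|^2$; combined with the pointwise inequalities $r_P^{(1)}(F,G)\le\mathcal{A}_P^{(1)}(F,G)\le\mathcal{O}_P^{(1)}(F,G)$, valid for every dual $G$ of $F$, this forces the three optimality notions of the canonical dual to coincide. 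Routing the proof through Theorem \ref{thm3point4} and its PSOD-analogue is, however, the shortest path.
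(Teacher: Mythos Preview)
Your proposal is correct and matches the paper's intended route: the corollary is stated without proof in the paper (just marked with $\square$), since it follows at once from Theorem~\ref{thm3point4} and the PSOD-analogue theorem that precedes Theorem~\ref{thm3point5}, exactly as you argue. One small caveat on your alternative ``direct'' argument: the chain $r_P^{(1)}(F,G)\le\mathcal{A}_P^{(1)}(F,G)\le\mathcal{O}_P^{(1)}(F,G)$ together with equality at the canonical dual readily gives (ii)$\Rightarrow$(i),(iii), but the reverse implications still require the $\varepsilon$-perturbation work from the theorems --- so your instinct to route through them is the right one.
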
 \hfill{$\square$}

\noindent
Let $F = \{f_i\}_{i=1}^N $ be a frame for $\mathcal{H}$ and  $\{q_i\}_{i=1}^N$ be a weight number sequence given by  (\ref{eqn2point2}). Let, $c= max_{i=1}^N \left\{ q_i\left\| S_{F}^{-1}f_i \right\|\;\|f_i\| : 1 \leq i \leq N \right\}.$ Set $\eta_1 = \left\{ i :  \left\| S_{F}^{-1}f_i \right\|\; \|f_i\| = c   \right\} $ and $\eta_2 = \{1,2,\dots,N\} \setminus \eta_1.$ Set $F_j = \text{span} \left\{f_i : i \in \Lambda_j  \right\},$ for $j=1,2.$\\
\begin{thm}\cite{leng3}\label{thm3point6}
Let $F = \{f_i\}_{i=1}^N $ be a frame for $\mathcal{H}$ and  $\{q_i\}_{i=1}^N$ be a weight number sequence given by  (\ref{eqn2point2}). Then following are equivalent;
\begin{enumerate}
    \item [{\em (i)}] The canonical dual $\left\{ S_{F}^{-1}f_i \right\}_{i=1}^N$ is the unique probabilistic optimal dual for 1-erasure.
     \item [{\em (ii)}] $F_1 \cap F_2 = \{0 \}$ and $\{f_i \}_{i \in \eta_2}$ is linearly independent.
\end{enumerate}
\end{thm}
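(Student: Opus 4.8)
The plan is to reduce the entire statement to a question about the linear space of admissible perturbations of the canonical dual. Recall from Section~2 that every dual frame of $F$ can be written uniquely as $G=\{S_F^{-1}f_i+h_i\}_{i=1}^N$, with $\{h_i\}_{i=1}^N$ ranging over the subspace $V:=\{\{h_i\}_{i=1}^N:\ \sum_{i=1}^N\langle f,h_i\rangle f_i=0\ \text{for every }f\in\mathcal H\}$, and that for $1$-erasure $\mathcal O_P^{(1)}(F,G)=\max_{1\le i\le N}q_i\|f_i\|\,\|S_F^{-1}f_i+h_i\|$. In particular $\mathcal O_P^{(1)}(F,S_F^{-1}F)=c$ and $\eta_1$ is exactly the set of indices where this maximum is attained, so $q_i\|S_F^{-1}f_i\|\,\|f_i\|=c$ for $i\in\eta_1$ and $q_i\|S_F^{-1}f_i\|\,\|f_i\|<c$ for $i\in\eta_2$ (and $c>0$, so $f_i\ne0$ on $\eta_1$). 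Therefore $S_F^{-1}F$ is the unique $1$-erasure probabilistic optimal dual if and only if there is no $\{h_i\}\in V\setminus\{0\}$ with $q_i\|f_i\|\,\|S_F^{-1}f_i+h_i\|\le c$ for all $i$. Rescaling $\{h_i\}\mapsto t\{h_i\}$ for small $t>0$, and using that for $i\in\eta_2$ this inequality is automatic while for $i\in\eta_1$ it reads $\|S_F^{-1}f_i+h_i\|\le\|S_F^{-1}f_i\|$, one checks this is equivalent to: there is no $\{h_i\}\in V\setminus\{0\}$ such that for every $i\in\eta_1$ either $h_i=0$ or $\operatorname{Re}\langle S_F^{-1}f_i,h_i\rangle<0$. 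Call such a sequence a \emph{bad direction}; the theorem becomes ``no bad direction exists $\iff$ (ii)''.

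For the implication (ii) $\Rightarrow$ ``no bad direction'': let $\{h_i\}\in V$ satisfy the $\eta_1$-condition. Then $\sum_{i\in\eta_1}\langle f,h_i\rangle f_i=-\sum_{i\in\eta_2}\langle f,h_i\rangle f_i$ belongs to $F_1\cap F_2=\{0\}$, so both partial sums vanish for all $f$; linear independence of $\{f_i\}_{i\in\eta_2}$ forces $h_i=0$ for $i\in\eta_2$, and the vanishing of the operator $T_1\colon f\mapsto\sum_{i\in\eta_1}\langle f,h_i\rangle f_i$ gives, on composing with $S_F^{-1}$ and taking traces of the rank-one summands, $\sum_{i\in\eta_1}\langle S_F^{-1}f_i,h_i\rangle=\operatorname{tr}(S_F^{-1}T_1)=0$. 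Hence $\sum_{i\in\eta_1}\operatorname{Re}\langle S_F^{-1}f_i,h_i\rangle=0$ with every summand equal to $0$ or strictly negative, so every summand is $0$, whence $h_i=0$ for $i\in\eta_1$ as well and $\{h_i\}=0$. Thus no bad direction exists.

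For the converse I would build a bad direction from the failure of (ii). If $\{f_i\}_{i\in\eta_2}$ is linearly dependent, a nontrivial relation $\sum_{i\in\eta_2}\mu_i f_i=0$ yields the bad direction given by $h_i:=0$ for $i\in\eta_1$ and $h_i:=\overline{\mu_i}\,w_0$ for $i\in\eta_2$, where $w_0\ne0$ is fixed. If instead $F_1\cap F_2\ne\{0\}$, choose $0\ne v\in F_1\cap F_2$ and write $v=\sum_{i\in\eta_1}a_i f_i=\sum_{i\in\eta_2}b_i f_i$. The natural candidate is $h_i=\overline{c_i}\,w$ with $c_i=a_i$ on $J:=\{i\in\eta_1:a_i\ne0\}$, $c_i=-b_i$ on $\eta_2$, and $c_i=0$ otherwise: it lies in $V$ because $\sum_i c_i f_i=v-v=0$, it vanishes on $\eta_1\setminus J$, and it is a bad direction precisely when $w$ can be chosen so that $\operatorname{Re}\langle a_i S_F^{-1}f_i,w\rangle<0$ for all $i\in J$.

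This last point is where I expect the real work to lie. Such a $w$ exists if and only if $0\notin\operatorname{conv}\{a_i S_F^{-1}f_i:i\in J\}$, and this can fail for an arbitrary representation of $v$ — indeed whenever $\sum_{i\in J}a_i f_i$ is itself a linear relation (which happens iff $\{f_i\}_{i\in J}$ is dependent) the origin lies in that convex hull. The way around it is to first pass to a representation $v=\sum_{i\in\eta_1}a_i f_i$ of minimal support: as long as $\{f_i\}_{i\in J}$ is linearly dependent, use a nontrivial relation among those vectors to cancel one coefficient, strictly shrinking $J$; after finitely many steps $\{f_i\}_{i\in J}$ is linearly independent (and $J\ne\emptyset$ since $v\ne0$). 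Then, $S_F$ being invertible, $\{a_i S_F^{-1}f_i\}_{i\in J}$ is linearly independent, so $0$ is not a convex combination of these vectors, and the separating-hyperplane theorem applied to $\mathcal H$ viewed as a real inner product space supplies the required $w$. The resulting $\{h_i\}\in V$ is nonzero (it is nonzero on $J$) and satisfies the $\eta_1$-condition, so it is a bad direction. Putting the two implications together proves the equivalence and hence the theorem.
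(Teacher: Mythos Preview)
The paper does not prove this theorem; it is quoted from \cite{leng3} (the $\square$ immediately following the statement marks it as a cited result), so there is no in-paper argument to compare against.

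Your proof is correct. The reduction of ``$S_F^{-1}F$ is the unique POD'' to the nonexistence of a nonzero perturbation $\{h_i\}\in V$ satisfying the $\eta_1$-condition is sound in both directions, and it correctly absorbs the case in which the canonical dual is not a POD at all (if some dual $G$ has $\mathcal O_P^{(1)}(F,G)<c$ then the corresponding $\{h_i\}$ already satisfies $\operatorname{Re}\langle S_F^{-1}f_i,h_i\rangle<0$ on $\eta_1$). For (ii)$\Rightarrow$(i), the trace identity $\sum_{i\in\eta_1}\langle S_F^{-1}f_i,h_i\rangle=\operatorname{tr}(S_F^{-1}T_1)=0$ together with the sign constraint forces $h_i=0$ on $\eta_1$, and the hypothesis on $\eta_2$ handles the rest. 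For the converse, the dependent-$\eta_2$ case is immediate; in the $F_1\cap F_2\neq\{0\}$ case your minimal-support trick makes $\{f_i\}_{i\in J}$, and hence $\{a_iS_F^{-1}f_i\}_{i\in J}$, linearly independent, so $0\notin\operatorname{conv}\{a_iS_F^{-1}f_i:i\in J\}$ and the separating-hyperplane theorem (in the underlying real inner-product space) produces the required $w$.

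One cosmetic remark: the parenthetical claim that $0$ lies in that convex hull whenever $\{f_i\}_{i\in J}$ is dependent is not literally true, since a complex linear relation among the $f_i$ need not produce nonnegative real coefficients for the vectors $a_iS_F^{-1}f_i$. This is only motivation for passing to a minimal-support representation and plays no role in the actual argument, which uses only the implication ``linearly independent $\Rightarrow 0\notin\operatorname{conv}$''.
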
 \hfill{$\square$}

\begin{cor}\cite{leng3}\label{cor3point1}
Let $F = \{f_i\}_{i=1}^N $ be a tight frame for $\mathcal{H}$ and  $\{q_i\}_{i=1}^N$ be a weight number sequence given by  (\ref{eqn2point2}). Then the canonical dual is the unique 1-erasure POD of $F$ if and only if $q_i \|f_i\|^2 $ is constant, $1 \leq i \leq N.$
\end{cor}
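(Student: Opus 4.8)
The plan is to deduce this from Theorem~\ref{thm3point6}. Since $F$ is tight with bound $A$ we have $S_F = A\,I$, hence $S_F^{-1}f_i = \frac{1}{A}f_i$ and $q_i\|S_F^{-1}f_i\|\,\|f_i\| = \frac{q_i}{A}\|f_i\|^2$. Thus $c = \frac{1}{A}\max_i q_i\|f_i\|^2$, the set $\eta_1$ consists exactly of the indices at which $q_i\|f_i\|^2$ attains its maximum, $\eta_2$ is its complement, and $F_1 = \mathrm{span}\{f_i : i\in\eta_1\}$, $F_2 = \mathrm{span}\{f_i : i\in\eta_2\}$. In this notation, ``$q_i\|f_i\|^2$ is constant'' is the same as ``$\eta_2 = \emptyset$''. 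Throughout I use the two trace identities for a tight frame, $\sum_{i=1}^N\|f_i\|^2 = \operatorname{tr}(S_F) = nA$, together with the weight-number properties $q_i\ge 1$ and $\sum_{i=1}^N \frac{1}{q_i} = n$.

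For the easy direction, assume $q_i\|f_i\|^2 = \kappa$ for all $i$. Then $nA = \sum_i\|f_i\|^2 = \kappa\sum_i\frac{1}{q_i} = \kappa n$, so $\kappa = A$ and $\frac{q_i}{A}\|f_i\|^2 = 1$ for every $i$; hence $c = 1$, $\eta_1 = \{1,\dots,N\}$ and $\eta_2 = \emptyset$. Then $F_1 = \mathrm{span}\{f_i\}_{i=1}^N = \mathcal{H}$, $F_2 = \{0\}$, so $F_1\cap F_2 = \{0\}$, and $\{f_i\}_{i\in\eta_2}$ is the empty family, hence vacuously linearly independent. By Theorem~\ref{thm3point6} the canonical dual is the unique $1$-erasure POD of $F$.

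For the converse, suppose the canonical dual is the unique $1$-erasure POD; Theorem~\ref{thm3point6} gives $F_1\cap F_2 = \{0\}$ and $\{f_i\}_{i\in\eta_2}$ linearly independent. The first step is to upgrade $F_1\cap F_2 = \{0\}$ to the true orthogonality $F_1\perp F_2$. For $f\in F_2$, the tight-frame reconstruction identity yields $Af = \sum_{i\in\eta_1}\langle f,f_i\rangle f_i + \sum_{i\in\eta_2}\langle f,f_i\rangle f_i$, so the vector $\sum_{i\in\eta_1}\langle f,f_i\rangle f_i = Af - \sum_{i\in\eta_2}\langle f,f_i\rangle f_i$ lies in $F_1\cap F_2 = \{0\}$; taking its inner product with $f$ gives $\sum_{i\in\eta_1}|\langle f,f_i\rangle|^2 = 0$, so $f\perp f_i$ for every $i\in\eta_1$, i.e. $F_2\perp F_1$. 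Since $F_1+F_2 = \mathcal{H}$, we obtain an orthogonal decomposition $\mathcal{H} = F_1\oplus F_2$; on $F_2$ the subfamily $\{f_i\}_{i\in\eta_2}$ is therefore a tight frame with bound $A$ (the $\eta_1$-terms drop out by orthogonality). Being linearly independent it is an exact frame for $F_2$, so its canonical dual $\{\frac{1}{A}f_i\}_{i\in\eta_2}$ is biorthogonal to it, which forces $\langle f_i,f_j\rangle = A\delta_{ij}$ and in particular $\|f_i\|^2 = A$ for every $i\in\eta_2$.

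Now suppose, toward a contradiction, that $\eta_2\neq\emptyset$. For $i\in\eta_1$ one has $q_i\|f_i\|^2 = Ac$, whereas for $i\in\eta_2$ one has $\|f_i\|^2 = A$ and $q_i\|f_i\|^2 = q_iA < Ac$, so $q_i < c$; combined with $q_i\ge 1$ this gives $c>1$. Writing $\sigma_1 = \sum_{i\in\eta_1}\frac{1}{q_i}$ and $\sigma_2 = \sum_{i\in\eta_2}\frac{1}{q_i}$, the identity $\sum_i\|f_i\|^2 = nA$ becomes $Ac\,\sigma_1 + A|\eta_2| = nA$, while $\sum_i\frac{1}{q_i} = n$ becomes $\sigma_1 + \sigma_2 = n$; subtracting, $(c-1)\sigma_1 = \sigma_2 - |\eta_2| = \sum_{i\in\eta_2}\bigl(\frac{1}{q_i} - 1\bigr) \le 0$ because $q_i\ge 1$. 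Since $\sigma_1 > 0$ this forces $c\le 1$, contradicting $c>1$; hence $\eta_2 = \emptyset$, i.e. $q_i\|f_i\|^2$ is constant. The one genuinely delicate point is the implication $F_1\cap F_2 = \{0\}\Rightarrow F_1\perp F_2$ and the ensuing recognition of $\{f_i\}_{i\in\eta_2}$ as an orthogonal system of vectors of common squared length $A$; once that is in place the conclusion is routine bookkeeping with the two trace identities.
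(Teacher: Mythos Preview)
Your argument is correct. The paper itself gives no proof of this corollary---it is simply quoted from \cite{leng3}---so there is nothing to compare against on the paper's side. Your derivation from Theorem~\ref{thm3point6} is a genuine and self-contained proof: the key step, upgrading $F_1\cap F_2=\{0\}$ to $F_1\perp F_2$ via the tight-frame reconstruction identity, is correct and nicely exploits tightness; from there the recognition that a linearly independent tight frame on $F_2$ is an orthogonal system of common squared norm $A$ is standard, and the final counting with the two trace identities $\sum_i\|f_i\|^2=nA$ and $\sum_i 1/q_i=n$ yields the contradiction cleanly. One cosmetic remark: the paper's displayed definition of $\eta_1$ omits the factor $q_i$ (an apparent typo), and you have correctly read $\eta_1$ as the set of indices where $q_i\|S_F^{-1}f_i\|\,\|f_i\|$ attains its maximum, consistent with the definition of $c$.
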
\hfill{$\square$}

\begin{thm}\label{thm3point7}
Let $F = \{f_i\}_{i=1}^N $ be a tight frame for $\mathcal{H}$ and  $\{q_i\}_{i=1}^N$ be a weight number sequence given by  (\ref{eqn2point2}).  Then the canonical dual is the unique 1-erasure PASOD-frame of $F$ if and only if $q_i\|f_i \|^2 = c,$ for all $1 \leq i \leq N$ and for some constant $c.$
\end{thm}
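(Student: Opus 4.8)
The plan is to reduce Theorem~\ref{thm3point7} to the already-established Theorem~\ref{thm3point5} together with Corollary~\ref{cor3point1}. By Theorem~\ref{thm3point5}, the canonical dual $S_F^{-1}F$ is the unique $1$-erasure PASOD-frame of $F$ if and only if $S_F^{-1}F$ is the unique probabilistic $1$-erasure optimal dual (POD) of $F$. Thus it suffices to invoke Corollary~\ref{cor3point1}, which states precisely that for a tight frame, $S_F^{-1}F$ is the unique $1$-erasure POD of $F$ if and only if $q_i\|f_i\|^2$ is constant for all $1\le i\le N$. Chaining these two equivalences gives the claim. So in a two-line proof one would simply write: ``By Theorem~\ref{thm3point5}, $S_F^{-1}F$ is the unique PASOD-frame of $F$ iff it is the unique $1$-erasure POD of $F$; by Corollary~\ref{cor3point1}, the latter holds iff $q_i\|f_i\|^2$ is constant. \hfill$\square$''

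If instead one wants a self-contained argument mirroring the proof of Theorem~\ref{thm3point4}, the approach would be as follows. Write $F$ tight with bound $A$, so the canonical dual is $\{\tfrac1A f_i\}_{i=1}^N$ and, by Proposition~3.1, $\mathcal{A}_P^{(1)}(F,S_F^{-1}F)=\max_{i=1}^N \tfrac{q_i}{A}\|f_i\|^2$. First I would show sufficiency: assuming $q_i\|f_i\|^2=c$ for all $i$, suppose $G=\{\tfrac1A f_i+h_i\}_{i=1}^N$ is any dual with $\mathcal{A}_P^{(1)}(F,G)=\mathcal{A}_P^{(1)}(F,S_F^{-1}F)$. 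Then for every $i$ one needs $\tfrac{q_i}{2}\big(|\langle f_i,\tfrac1A f_i+h_i\rangle|+\|f_i\|\,\|\tfrac1A f_i+h_i\|\big)\le \tfrac{q_i}{A}\|f_i\|^2$; since $\|f_i\|\,\|\tfrac1A f_i+h_i\|\ge |\langle f_i,\tfrac1A f_i+h_i\rangle|\ge \mathrm{Re}\langle f_i,\tfrac1A f_i+h_i\rangle = \tfrac1A\|f_i\|^2+\mathrm{Re}\langle f_i,h_i\rangle$, equality forces $\mathrm{Re}\langle f_i,h_i\rangle\le 0$ on all of $\{1,\dots,N\}$ (here the constancy of $q_i\|f_i\|^2$ is what makes every index an ``active'' index). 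Then using the Parseval-type identity $\sum_i\langle f_i,h_i\rangle=0$ (which follows from $\{h_i\}$ being a deficit sequence, since $\langle f_i,h_i\rangle$ summed equals $\mathrm{tr}$ of the zero operator) one gets $\sum_i\mathrm{Re}\langle f_i,h_i\rangle=0$, hence $\mathrm{Re}\langle f_i,h_i\rangle=0$ for all $i$; plugging back, the norm term $\|f_i\|\sqrt{\tfrac1{A^2}\|f_i\|^2+\|h_i\|^2}$ must equal $\tfrac1A\|f_i\|^2$, forcing $h_i=0$ for each $i$ with $f_i\neq 0$, i.e. $G=S_F^{-1}F$.

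For necessity, assume $q_i\|f_i\|^2$ is not constant; let $\Lambda_1=\{i: q_i\|f_i\|^2 \text{ is maximal}\}$, $\Lambda_2$ its complement, both nonempty. Pick any $h$ supported off the maximal indices — concretely, since $\Lambda_2\neq\varnothing$ one can choose a nonzero deficit sequence $\{h_i\}$ with $h_i=0$ for $i\in\Lambda_1$ (existence of such a perturbation orthogonal to the frame in the relevant sense is where a small lemma is needed). Then for the dual $G=\{\tfrac1A f_i+h_i\}$, the maximum in $\mathcal{A}_P^{(1)}$ is still attained on $\Lambda_1$ with unchanged value, so $\mathcal{A}_P^{(1)}(F,G)=\mathcal{A}_P^{(1)}(F,S_F^{-1}F)$ while $G\neq S_F^{-1}F$, contradicting uniqueness. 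The main obstacle in this self-contained route is precisely the construction of a nonzero deficit sequence vanishing on $\Lambda_1$ — one must verify $F_1\cap F_2\neq\{0\}$ or $\{f_i\}_{i\in\Lambda_2}$ dependent, which is exactly the failure of condition (ii) in Theorem~\ref{thm3point6}; this is why the clean proof simply cites Theorem~\ref{thm3point5} and Corollary~\ref{cor3point1}, and that is the route I would take.
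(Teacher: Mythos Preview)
Your two-line proof via Theorem~\ref{thm3point5} and Corollary~\ref{cor3point1} is correct and complete: both results are stated as biconditionals for tight frames, so chaining them gives Theorem~\ref{thm3point7} immediately in both directions.

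The paper actually uses exactly this chain for the converse direction (unique PASOD $\Rightarrow$ constant), citing Theorem~\ref{thm3point5} then Corollary~\ref{cor3point1}. For the forward direction, however, the paper does not simply invoke Theorem~\ref{thm3point5}; instead it argues directly: from $q_i\|f_i\|^2=c$ it uses Corollary~\ref{cor3point1} to get uniqueness of the POD, then assumes a second PASOD $G=\{\tfrac1A f_i+h_i\}$ exists and, via an $\epsilon$-perturbation argument, builds a dual strictly beating the canonical dual in the operator-norm measure, contradicting POD uniqueness. This is essentially a reproof of one implication of Theorem~\ref{thm3point5} in this special case, so your shortcut is genuinely more economical. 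Your self-contained sufficiency sketch is also valid and in fact a bit cleaner than the paper's: once you obtain $\mathrm{Re}\langle f_i,h_i\rangle\le 0$ for all $i$, summing and using $\sum_i\langle f_i,h_i\rangle=0$ forces $\mathrm{Re}\langle f_i,h_i\rangle=0$, and then both the modulus term and the norm term are $\ge \tfrac1A\|f_i\|^2$ while their average is $\le \tfrac1A\|f_i\|^2$, giving $h_i=0$ directly without any $\epsilon$-construction. Your caution about the self-contained necessity argument is well placed; the citation route is the right one there.
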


\begin{proof}
Let $F = \{f_i\}_{i=1}^N,$ be a tight frame with tight bound $A.$ \\
If $q_i\|f_i \|^2 = c,$ for all $1 \leq i \leq N$ then by corollary \ref{cor3point1}, the canonical dual of $F$ is the unique POD-frame of $F$ for 1-erasure. \\
For any dual $G$ of $F,$
$$ \mathcal{A}_{P}^{(1)}(F,G)  =  max_{i=1}^N \frac{q_i|\langle f_i, g_i\rangle | + q_i \|f_i\|\; \|g_i\| }{2} \geq   max_{i=1}^N q_i \|f_i\|\; \|g_i\| \geq max_{i=1}^N \frac{q_i}{A}\|f_i\|^2 = \mathcal{A}_{P}^{(1)}(F,S_{F}^{-1}F) .$$
Hence, canonical dual is a PASOD-frame of $F$ for 1-erasure.
Let $G= \left\{\frac{1}{A}f_i + h_i\right\}_{i=1}^N$ be a non-canonical dual of $F$ such that $G \in  \Delta_{F}^{(1)}.$\\
Therefore,
\begin{align*}
    \mathcal{A}_{P}^{(1)}(F,G) &= max_{i=1}^N \frac{q_i|\langle f_i, \frac{1}{A}f_{i} + h_i\rangle | + q_i \|f_i\|\; \left\|\frac{1}{A} f_{i} +h_i\right\| }{2} \\&= max_{i=1}^N \frac{\left| \frac{q_i}{A}\|f_{i}\|^2 + q_i\langle f_i, h_i\rangle \right| + \sqrt{\frac{q_i^2}{A^2} \|f_i\|^4 + q_i^2\|f_i\|^2\|h_i\|^2 + 2\frac{q_i^2}{A} \|f_i\|^2 Re( \langle f_i, h_i\rangle)} }{2} \\&= max_{i=1}^N \frac{\left| \frac{c}{A} + q_i\langle f_i, h_i\rangle \right| + \sqrt{\frac{c^2}{A^2} + cq_i\|h_i\|^2 + 2\frac{cq_i}{A} Re( \langle f_i, h_i\rangle)} }{2} \\&= max_{i=1}^N \frac{q_i}{A}\|f_i\|^2 \\&= \frac{c}{A},
\end{align*}
which implies that $Re( \langle f_i, h_i\rangle) < 0, 1\leq i \leq N.$\\
Accordingly, for all $1\leq i \leq N,$ there exists $\epsilon_i >0$ small enough such that 
\begin{align}
    q_i\| f_i\|\; \left\| \tfrac{1}{A} f_i + \epsilon_{i}h_i\right\|=\sqrt{\tfrac{q_i^2}{A^2}\|f_i\|^4 + q_i^2\epsilon^2_{i}\|f_i\|^2\|h_i\|^2 + 2\tfrac{q_i^2\epsilon_{i}}{A}\|f_i\|^2 Re( \langle f_i, h_i\rangle)} <  \frac{q_i}{A}\| f_i\|^2 = c
\end{align}
Take $\epsilon = min\{\epsilon_i : 1 \leq i \leq N \}.$\\
Therefore, for all $1 \leq i \leq N,$
$$  q_i\| f_i\|\; \left\| \tfrac{1}{A} f_i + \epsilon h_i\right\|=\sqrt{\tfrac{q_i^2}{A^2}\|f_i\|^4 + q_i^2\epsilon^2\|f_i\|^2\|h_i\|^2 + 2\tfrac{q_i^2\epsilon}{A}\|f_i\|^2 Re( \langle f_i, h_i\rangle)} < \frac{ q_i}{A}\| f_i\|^2 = c.$$
Thus, $max_{i=1}^N q_i\| f_i\|\; \| \frac{1}{A}f_i + \epsilon h_i\| < max_{i=1}^N \frac{q_i}{A}\| f_i\|^2 .$\\
So, for the dual $\tilde{G}=\{\frac{1}{A}f_i+\epsilon h_i\}_{i=1}^N,$\;$\mathcal{O}_{F,\tilde{G}}^{(1)} < \mathcal{O}_{F,S_{F}^{-1}F}^{(1)}.$\\
This gives a contradiction that the canonical dual is the unique POD-frame for 1-erasure. Hence, the canonical dual is the unique 1-erasure PASOD-frame of $F.$\\
For the converse part, if the canonical dual is the unique 1-erasure PASOD of $F,$ then by Theorem \ref{thm3point5} the canonical dual is the unique probabilistic 1-erasure optimal dual of $F.$ Then by corollary\ref{cor3point1} $q_i\|f_i\|^2 = c,\;\;1\leq i \leq N.$\\

\end{proof}\hfill{$\square$}\\
Now we are going to give a construction of the collection of 1-erasure PASOD-frame of $F.$ Also we will show that for $N>n,\;\Delta_F^{(1)}$  is not unique.\\~\\
\noindent
Let $F = \{f_i\}_{i=1}^N $ be a frame for $\mathcal{H}$ and $\{q_i\}_{i=1}^N$ be a weight number sequence given by  (\ref{eqn2point2}). Set $l= max_{i=1}^N \left\{ q_i\left\| S_{F}^{-1/2}f_i \right\|^2 + q_i\|f_i\|\;\left\| S_{F}^{-1}f_i\right\| : 1 \leq i \leq N \right\}.$ Set $\Lambda_1 = \left\{ i : q_i \left\| S_{F}^{-1/2}f_i \right\|^2 + q_i\|f_i\|\;\left\| S_{F}^{-1}f_i\right\| = l   \right\} $ and $\Lambda_2 = \{1,2,..N\} \setminus \Lambda_1.$ Set $H_j = \text{span} \left\{f_i : i \in \Lambda_j  \right\},$ for $j=1,2.$\\

\begin{thm} \label{thm3point8}
Let $F = \{f_i\}_{i=1}^N $ be a frame for $\mathcal{H}.$ If $H_1 \cap H_2 = \{0\}$ and $\{f_i\}_{i\in \Lambda_1}$  is linearly independent then $S_{F}^{-1} F \in \Delta_{F}^{(1)}.$ Moreover, if $N > n$ then $S_{F}^{-1} F$ is not the unique 1-erasure PASOD-frame of $F.$
\end{thm}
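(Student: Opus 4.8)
The plan is to treat the two assertions separately, with the first being a fairly direct optimality computation and the second being a perturbation/dimension-counting argument.

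For the first assertion, I would mimic the structure of Theorem \ref{thm3point6} (the analogous statement for POD), since the hypotheses $H_1\cap H_2=\{0\}$ and linear independence of $\{f_i\}_{i\in\Lambda_1}$ are exactly parallel. Write an arbitrary dual as $G=\{S_F^{-1}f_i+h_i\}_{i=1}^N$, where $\{h_i\}$ satisfies the standard side conditions $\sum_i\langle f,f_i\rangle h_i=\sum_i\langle f,h_i\rangle f_i=0$. Compute $\mathcal{A}_P^{(1)}(F,G)=\max_i \tfrac{q_i}{2}\bigl(|\langle f_i,S_F^{-1}f_i+h_i\rangle|+\|f_i\|\,\|S_F^{-1}f_i+h_i\|\bigr)$ using Proposition 3.1 (the formula for $\mathcal{A}_P^{(1)}$). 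The aim is to show this is $\ge l/2=\mathcal{A}_P^{(1)}(F,S_F^{-1}F)$. The key point: if for some non-canonical $G$ we had $\mathcal{A}_P^{(1)}(F,G)<l/2$, then for every $i\in\Lambda_1$ one needs both $\|S_F^{-1}f_i+h_i\|<\|S_F^{-1}f_i\|$ (from the norm term being smaller) and a sign condition $\operatorname{Re}\langle S_F^{-1}f_i,h_i\rangle<0$. Then one argues, using the side condition $\sum_i\langle f,f_i\rangle h_i=0$ tested against suitable $f$, that $\sum_{i\in\Lambda_1}\langle\,\cdot\,,f_i\rangle h_i$ is forced to lie in $H_2$ while also lying in $H_1$ (since each $h_i$ relevant to $\Lambda_1$ can be expressed via the $f_i$, $i\in\Lambda_1$, using their linear independence), forcing it to be $0$; linear independence of $\{f_i\}_{i\in\Lambda_1}$ then gives $\langle f,f_i\rangle=0$ which is absurd. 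This contradiction gives $S_F^{-1}F\in\Delta_F^{(1)}$. I would lean on the corresponding argument in \cite{leng3} and adapt the error functional from the operator-norm one to the averaged one; the extra $|\langle f_i,g_i\rangle|$ term only helps, since it is nonnegative and equals $\|S_F^{-1}f_i\|\,\|f_i\|$ at the canonical dual (as $f_i$ and $S_F^{-1}f_i$ are parallel only when... — actually one uses $|\langle f_i,S_F^{-1}f_i\rangle|\le\|f_i\|\|S_F^{-1}f_i\|$, so the canonical value of the averaged functional is $\le l/2$, and one must check equality is what $l$ records; I would define $l$ carefully so this is automatic, as the excerpt does).

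For the "moreover" part ($N>n$ implies non-uniqueness), the plan is to perturb the canonical dual within $\Delta_F^{(1)}$. Since $N>n$, the space of admissible perturbations $\{h_i\}_{i=1}^N$ — i.e. sequences with $\sum_i\langle f,f_i\rangle h_i=0$ for all $f$ — is nontrivial (its dimension is $n(N-n)>0$). Pick a nonzero admissible $\{h_i\}$. For $i\in\Lambda_2$ the strict inequality $q_i\|S_F^{-1/2}f_i\|^2+q_i\|f_i\|\|S_F^{-1}f_i\|<l$ is strict, so for small $t$ the $i$-th term of $\mathcal{A}_P^{(1)}(F,\{S_F^{-1}f_i+th_i\})$ stays below $l/2$ by continuity. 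For $i\in\Lambda_1$, I would choose the perturbation so that the $i$-th term does not increase to first order: this needs $\operatorname{Re}\langle S_F^{-1}f_i,h_i\rangle\le 0$ and control of the derivative of $|\langle f_i,S_F^{-1}f_i+th_i\rangle|$ — and here one can replace $h_i$ by $-h_i$ if necessary, or take a convex-combination argument: by convexity of $\Delta_F^{(1)}$ (Theorem 3.1) it suffices to exhibit one other point. Concretely, if $S_F^{-1}F\in\Delta_F^{(1)}$ and $\Delta_F^{(1)}$ is convex and closed (Theorem 3.1, Proposition 3.2), then either it is a singleton or it contains a segment; to rule out the singleton I produce an explicit second optimal dual by choosing $\{h_i\}$ supported (after projection) off the "active" set $\Lambda_1$ in a way that keeps all active terms exactly equal to $l/2$ — possible precisely because the constraint defining admissible perturbations has more freedom than the number of active indices when $N>n$ and $\Lambda_1$ is not all of $\{1,\dots,N\}$ with independent $f_i$'s.

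The main obstacle I anticipate is the non-uniqueness half: showing the perturbation can be taken to keep every $i\in\Lambda_1$ term unchanged (not merely non-increasing) rather than strictly decreasing only the $\Lambda_2$ terms. The cleanest route is probably: by the first part and Theorem \ref{thm3point5}/Corollary \ref{cor3point1}-type reasoning the canonical dual being non-unique as a POD (which, for general frames, fails exactly when $H_1\cap H_2=\{0\}$ and $\{f_i\}_{i\in\Lambda_1}$ independent do NOT simultaneously hold — wait, that is the uniqueness criterion, so I must instead argue directly) forces existence of a distinct optimal dual via a dimension count: the affine set of duals has dimension $n(N-n)$, the map $G\mapsto(\text{active-term values})$ has image constrained by at most $|\Lambda_1|$ equalities, and $n(N-n)>0$ with the active constraints being "flat" in enough directions yields a nontrivial fiber over the optimal value. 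I would present this as a short linear-algebra lemma and then conclude. Alternatively, and more safely, I note that Theorem \ref{thm3point8}'s hypotheses already guarantee $S_F^{-1}F$ is optimal, and I construct the second dual by exhibiting $\{h_i\}\ne 0$ admissible with $h_i\perp f_i$ and $h_i\perp S_F^{-1}f_i$ for all $i\in\Lambda_1$ — feasible when $N>n$ by a counting argument — so that $\|S_F^{-1}f_i+th_i\|^2=\|S_F^{-1}f_i\|^2+t^2\|h_i\|^2$ and $\langle f_i,S_F^{-1}f_i+th_i\rangle=\langle f_i,S_F^{-1}f_i\rangle$ for $i\in\Lambda_1$; shrinking $t$ handles $\Lambda_2$, so the $i$-th averaged term for $i\in\Lambda_1$ only grows by $O(t^2)$, which can be absorbed while staying at value $l/2$ after re-optimizing — this last absorption is the delicate point and I would handle it by instead taking the midpoint of $S_F^{-1}F$ and the dual produced in the first part applied to a unitarily rotated frame, invoking Theorem 3.3. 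I expect to settle on whichever of these gives the shortest rigorous write-up.
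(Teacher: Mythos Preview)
Your proposal misses the single observation that drives the entire proof in the paper and that makes both halves essentially one line each: under the hypotheses $H_1\cap H_2=\{0\}$ and $\{f_i\}_{i\in\Lambda_1}$ linearly independent, \emph{every} dual $G=\{S_F^{-1}f_i+u_i\}_{i=1}^N$ automatically has $u_i=0$ for all $i\in\Lambda_1$. This follows from the side condition $\sum_{i}\langle f,u_i\rangle f_i=0$ (not the one you chose): splitting over $\Lambda_1$ and $\Lambda_2$ gives $\sum_{i\in\Lambda_1}\langle f,u_i\rangle f_i=-\sum_{i\in\Lambda_2}\langle f,u_i\rangle f_i$; the left side lies in $H_1$, the right in $H_2$, so both vanish; linear independence of $\{f_i\}_{i\in\Lambda_1}$ then forces $\langle f,u_i\rangle=0$ for all $f$ and all $i\in\Lambda_1$, hence $u_i=0$ there. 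You instead work with $\sum_i\langle f,f_i\rangle h_i=0$, and your claim that $\sum_{i\in\Lambda_1}\langle\,\cdot\,,f_i\rangle h_i$ lies in $H_1$ ``since each $h_i$ relevant to $\Lambda_1$ can be expressed via the $f_i$, $i\in\Lambda_1$'' is unjustified: the perturbations $h_i$ are arbitrary vectors in $\mathcal{H}$, with no reason to live in $H_1$.

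Once you have $u_i=0$ on $\Lambda_1$, the first assertion is immediate (for any dual the $\Lambda_1$-terms coincide with the canonical ones, so $\mathcal{A}_P^{(1)}(F,G)\ge l/2$), and the ``moreover'' part requires none of the orthogonality, convexity, or unitary-rotation manoeuvres you sketch. Simply take any non-canonical dual (one exists since $N>n$); it already has $u_i=0$ on $\Lambda_1$ by the lemma, and $u_j\ne 0$ for some $j\in\Lambda_2$; since the $\Lambda_2$-terms are strictly below $l$, a small enough scaling $\epsilon u_i$ keeps them below $l$ while the $\Lambda_1$-terms are unchanged, producing a second optimal dual. Your worry about keeping the active terms ``exactly equal to $l/2$'' and the $O(t^2)$ absorption problem disappears entirely once you see that the perturbation is already identically zero on $\Lambda_1$.
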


\begin{proof}
Let $G= \left\{ S_{F}^{-1}f_i + u_i \right\}_{i=1}^N$ be a dual of $F.$\\
Then, $\sum_{i=1}^N \langle f,u_i \rangle f_i =0,$ which implies,  $\sum_{i \in \Lambda_1} \langle f,u_i \rangle f_i =0 = \sum_{i \in \Lambda_2} \langle f,u_i \rangle f_i. $\\
Using the condition $\{f_i\}_{i\in \Lambda_1}$  is linearly independent, we have $\langle f,u_i \rangle  =0,$ for all \;$i \in \Lambda_1$ and for all\; $i \in \mathcal{H}.$ Further implies, $u_i = 0,$ for all $i \in \Lambda _1.$\\
Also, from the fact that $\sum_{i \in \Lambda_2} \langle f,u_i \rangle f_i =0,$ we have  $T_{F^2}^{*}T_{U^2} =0,$ where $F^2 = \{f_i\}_{ i \in \Lambda_2}$ and $U^2 = \{u_i \}_{ i \in \Lambda_2  }.$ This implies $tr\left(T_{F^2}^{*}T_{U^2}\right) = tr\left(T_{U^2} T_{F^2}^{*} \right)= \sum_{i \in \Lambda_2} \langle f_i , u_i \rangle = 0.$\\
Therefore,
\begin{align*}
   \mathcal{A}_P^{(1)}(F,G) &=  max_{i=1}^N \frac{q_i\left|\langle f_i,S_{F}^{-1} f_{i} + u_i \rangle \right| + q_i\|f_i\| \left\|S_{F}^{-1} f_{i} + u_i\right\| }{2} \\& \geq max_{i \in \Lambda_1} q_i\frac{\left|\langle f_i,S_{F}^{-1} f_{i} + u_i \rangle \right| + \|f_i\| \left\|S_{F}^{-1} f_{i} + u_i\right\| }{2} \\&= max_{i \in \Lambda_1} q_i\frac{\left\| S_{F}^{-1/2} f_{i} \right\|^2 + \|f_i\| \left\|S_{F}^{-1} f_{i} \right\| }{2} \\&=l \\&= max_{i =1}^N q_i\frac{\left\| S_{F}^{-1/2} f_{i} \right\|^2 + \|f_i\| \left\|S_{F}^{-1} f_{i} \right\| }{2} \\&= \mathcal{A}_{p}^{(1)}(F,S_{F}^{-1}F )
\end{align*}
Hence, the canonical dual is an 1-erasure PASOD-frame of $F.$\\
Let us consider $N >n.$ Then there exist a dual $G = \{g_i\}_{i=1}^N = \left\{S_{F}^{-1} f_{i} + u_i \right\}_{i=1}^N$ of $F$ such that $u_i =0$ for all $i \in \Lambda_1$ and $u_i \neq 0$ for some $i \in \Lambda_2.$ As $q_i\left\{|\langle f_i, S_{F}^{-1} f_{i} \rangle | + \|f_i \| \left\| S_{F}^{-1} f_{i}  \right\|\right\} < l,$ for all $i \in \Lambda_2,$ then there exists $\epsilon >0 $ small enough such that $q_i\left\{|\langle f_i, S_{F}^{-1} f_{i} + \epsilon u_i \rangle | + \|f_i \| \left\| S_{F}^{-1} f_{i} + \epsilon u_i \right\|\right\} < l,$ for all $i \in \Lambda_2.$\\

Therefore, for the dual $\Tilde{G} = \{ S_{F}^{-1} f_{i} + \epsilon u_i\}_{i=1}^N, $ we have 
\begin{align*}
\mathcal{A}_{P}^{(1)}(F,\Tilde{G}) &=  max_{i=1}^N \frac{q_i\left|\langle f_i,S_{F}^{-1} f_{i} + \epsilon u_i \rangle \right| + q_i\|f_i\| \left\|S_{F}^{-1} f_{i} + \epsilon u_i\right\| }{2} \\& = \frac{l}{2} \\& =  max_{i=1}^N \frac{q_i\left|\langle f_i,S_{F}^{-1} f_{i} \rangle \right| + q_i\|f_i\| \left\|S_{F}^{-1} f_{i}\right\| }{2} \\&= \mathcal{A}_{P}^{(1)}(F,S_{F}^{-1}F)
\end{align*}
Hence, $\Tilde{G} = \{ S_{F}^{-1} f_{i} + \epsilon u_i\}_{i=1}^N \in \Delta_{F}^{(1)}.$
\end{proof}\hfill{$\square$}




\section{Averaged Spectrally Optimal Dual Pair for Erasure }

 Now we will define the concept of Probabilistic Averaged Spectrally Optimal Dual pair for any $m$-erasures, $1 \leq m \leq N.$ Let $F = \{f_i\}_{i=1}^N $ be a frame for  $\mathcal{H}$  and  $G=\{g_i\}_{i=1}^N$ be a dual frame of $F$. $\{q_i\}_{i=1}^N$ be a weight number sequence given by  (\ref{eqn2point2}). The error operator $E_{\Lambda,(F,G)}$ is defined as in equation (\ref{eqn2point3}). The maximum probabilistic averaged spectral error for a dual pair $(F,G)$ for $m$-erasure is same as $ \mathcal{A}_{P}^{(m)}(F,G).$ Now we define \\
 $$ \mathcal{A}_{P}^{(m)} := inf \left\{ \mathcal{A}_{P}^{(m)}(F,G) : (F,G) \text{is a (N,n) dual pair in } \mathcal{H} \right\}.$$

A  dual pair $(F,G)$ is called 1-erasure Probabilistic Averaged Spectrally Optimal Dual pair (in short PASOD-pair) if $\mathcal{A}_{P}^{(1)}(F,G)  = \mathcal{A}_{P}^{(1)}. $\\
In general, a dual pair$(F,G)$ is called $m$-erasure PASOD-pair if it is $(m-1)$-erasure PASOD-pair and  $\mathcal{A}_{P}^{(m)}(F,G)  = \mathcal{A}_{P}^{(m)}. $\\
Let us define
$$ \zeta_{P}^{(m)} := \left\{ (F,G) : (F,G) \in \zeta^{(m-1)} \;\text{and}\;\; \mathcal{A}_{P}^{(m)}(F,G) = \mathcal{A}_{P}^{(m)} \right\}$$
as the set of $m$-erasure PASOD-pair in   $\mathcal{H}.$\\
\noindent
Similarly, we will define Probabilistic Spectrally optimal dual pair( in short PSOD-pair) and Probabilistic Optimal dual pair (POD-pair) for a Hilbert space $\mathcal{H}.$ Let us define
$$ r_P^{(m)} := inf \bigg\{ r_P^{(m)}(F,G) : (F,G) \text{ is a (N,n) dual pair } \bigg\} $$
$$ \mathcal{O}_P^{(m)} := inf \bigg\{ \mathcal{O}_P^{(m)}(F,G) : (F,G) \text{ is a (N,n) dual pair } \bigg\} $$
A dual pair $(F,G)$ is  called $m$-erasure PSOD-pair if it is $(m-1)$ erasure PSOD-pair and $r_P^{(m)}(F,G) = r_P^{(m)}.$ A dual pair $(F,G)$ is  called $m$-erasure POD-pair if it is $(m-1)$ erasure POD-pair and $\mathcal{O}_P^{(m)}(F,G) = \mathcal{O}_P^{(m)}.$ 
\begin{defn}
Let $\{p_i\}_{i=1}^N$ be a probability sequence given by (\ref{eqn2.1}). $\{q_i\}_{i=1}^N$ be the corresponding weight number sequence defined as  (\ref{eqn2point2}). We call a parseval frame $\{f_i\}_{i=1}^N$ is a probability uniform parseval frame if it satisfies 
$$ \|f_i\| = \frac{1}{\sqrt{q_i}} $$  for all $1 \leq i \leq N.$
\end{defn}
Now we will prove the existance of $(N,n)$ probability uniform parseval frame by using the following result.
\begin{thm} \cite{cass2} \label{thm4point1}
Let S be a positive self-adjoint operator on an n-dimensional Hilbert space $\mathcal{H}$ .Let $\lambda_1 \geq \lambda_2 \geq ...\geq \lambda_n \gneq 0$ be the eigenvalues of $S$. Fix $N \geq n$ and the real numbers $a_1 \geq a_2 \geq ... \geq a_N \gneq 0$. Then the following are equivalent;\\
\begin{enumerate}
     \item[{\em (i)}] There is a frame $\{f_i \}_{i=1}^N$ for $H$ with frame operator $S$ and $\|f_i \| = a_i$ , for all $1 \leq i \leq N $.\\
     \item[{\em (ii)}] For every $1 \leq k \leq n$,\\~\\
     $\displaystyle{ \sum_{i=1}^k a_{i}^2 \leq \sum_{i=1}^k\lambda_{i}} $  and  $\displaystyle{ \sum_{i=1}^N a_{i}^2 = \sum_{i=1}^n \lambda_i } $.
\end{enumerate}
   \end{thm}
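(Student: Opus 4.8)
The plan is to recognise Theorem~\ref{thm4point1} as the frame-theoretic form of the Schur--Horn theorem and to transfer everything to the $N\times N$ Gram matrix. For a frame $\Phi=\{\varphi_i\}_{i=1}^N$ with analysis operator $\Theta_\Phi$ and synthesis operator $\Theta_\Phi^*$ (so $\Theta_\Phi^*e_i=\varphi_i$ for the standard basis $\{e_i\}_{i=1}^N$ of $\mathbb C^N$), the frame operator is $S_\Phi=\Theta_\Phi^*\Theta_\Phi$ on $\mathcal H$ and the Gram operator is $G_\Phi=\Theta_\Phi\Theta_\Phi^*$ on $\mathbb C^N$. I would record two standard facts: $(G_\Phi)_{ii}=\langle\varphi_i,\varphi_i\rangle=\|\varphi_i\|^2$, and the nonzero eigenvalues of $S_\Phi$ and $G_\Phi$ agree with multiplicities (as $AB$ and $BA$ always do). Since $S$ is positive and invertible on the $n$-dimensional space $\mathcal H$ with spectrum $\lambda_1\ge\cdots\ge\lambda_n>0$, any frame with frame operator $S$ has a Gram matrix whose spectrum is $\lambda_1,\dots,\lambda_n$ followed by $N-n$ zeros; and because $a_1\ge\cdots\ge a_N$, the list $(a_1^2,\dots,a_N^2)$ is exactly the diagonal of that Gram matrix in decreasing order.

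For (i)$\Rightarrow$(ii) I would invoke the easy (majorization) half of Schur--Horn applied to the Hermitian matrix $G_\Phi$: the decreasingly ordered diagonal is majorized by the decreasingly ordered spectrum, i.e. $\sum_{i=1}^k a_i^2\le\sum_{i=1}^k\mu_i$ for $1\le k\le N$ with equality at $k=N$, where $\mu_i=\lambda_i$ for $i\le n$ and $\mu_i=0$ for $i>n$. Reading off $k\le n$ gives $\sum_{i=1}^k a_i^2\le\sum_{i=1}^k\lambda_i$, and $k=N$ gives $\sum_{i=1}^N a_i^2=\sum_{i=1}^n\lambda_i$ (this last identity is in any case immediate from $\sum_i\|\varphi_i\|^2=\operatorname{tr}S=\sum_i\lambda_i$).

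For (ii)$\Rightarrow$(i) I would first observe that the two conditions in (ii) say precisely that $(a_1^2,\dots,a_N^2)$ is majorized by $(\lambda_1,\dots,\lambda_n,0,\dots,0)$: for $k\le n$ it is the hypothesis, for $n<k<N$ it follows from $\sum_{i=1}^k a_i^2\le\sum_{i=1}^N a_i^2=\sum_{i=1}^n\lambda_i=\sum_{i=1}^k\mu_i$, and $k=N$ is the assumed equality. The existence half of Schur--Horn then supplies an $N\times N$ positive semidefinite Hermitian matrix $G$ with eigenvalues $\lambda_1,\dots,\lambda_n,0,\dots,0$ and diagonal $a_1^2,\dots,a_N^2$. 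Factoring $G=W^*W$ with $W\colon\mathbb C^N\to\mathbb C^n$ of rank $n$ and setting $\tilde f_i=We_i\in\mathbb C^n\cong\mathcal H$, the Gram matrix of $\{\tilde f_i\}$ is $G$, so $\|\tilde f_i\|^2=G_{ii}=a_i^2$, while the frame operator $WW^*$ has nonzero spectrum $\lambda_1,\dots,\lambda_n$ (in particular it is invertible, so $\{\tilde f_i\}$ is indeed a frame) and hence the same spectrum as $S$. Choosing a unitary $U$ on $\mathcal H$ with $U(WW^*)U^*=S$ and putting $f_i=U\tilde f_i$ yields a frame with frame operator $S$ and $\|f_i\|=a_i$.

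The single substantial ingredient is the existence half of the Schur--Horn theorem --- equivalently the ``$\Leftarrow$'' direction above --- that every diagonal vector majorized by a prescribed spectrum is attained by some Hermitian matrix with that spectrum. This is where I expect the real work to lie in a self-contained treatment: one would reprove it, for instance by induction on $N$ using a single Givens rotation at each step to adjust two diagonal entries while keeping the trace and the relevant partial sums under control, or via an eigenstep-type construction. Everything else --- the Gram/frame-operator dictionary, the trace identity, the factorisation $G=W^*W$, and the final unitary adjustment from $WW^*$ to $S$ --- is routine finite-dimensional linear algebra.
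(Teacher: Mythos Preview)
The paper does not prove this theorem at all; it is quoted from Casazza and Leon \cite{cass2} and closed with a $\square$ immediately after the statement, so there is no in-paper argument to compare against. Your Schur--Horn reduction via the Gram matrix is correct and is in fact the standard route (and essentially the one taken in the cited reference): translate norms to diagonal entries of $G_\Phi=\Theta_\Phi\Theta_\Phi^*$, translate the frame operator spectrum to the nonzero spectrum of $G_\Phi$, and then invoke majorization/Schur--Horn in both directions, finishing with a unitary conjugation to hit the prescribed $S$. The only nontrivial input you flag---the existence half of Schur--Horn---is exactly where the work sits, and your suggested Givens-rotation induction (or the eigenstep construction) is the expected way to make the sketch self-contained.
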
\hfill{$\square$} 
   
\begin{cor}\label{cor4point1}
 Let $\mathcal{H}$ be an $n$-dimensional Hilbert space. Let $\{p_i\}_{i=1}^N$ be a probability sequence given by (\ref{eqn2.1}) and $\{q_i\}_{i=1}^N$ be the corresponding weight number sequence defined as  (\ref{eqn2point2}).  Then there always exists a probability uniform parseval frame $F = \{f_i\}_{i=1}^N.$
\end{cor}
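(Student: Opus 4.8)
The plan is to apply Theorem~\ref{thm4point1} with target operator $S = I_{\mathcal{H}}$ and prescribed norm sequence $a_i = 1/\sqrt{q_i}$, and then to read off a Parseval frame having exactly these norms. Since Theorem~\ref{thm4point1} requires the target norms to be listed in non-increasing order, I would first reindex so that $q_1 \le q_2 \le \cdots \le q_N$, equivalently $a_1 \ge a_2 \ge \cdots \ge a_N$; this costs nothing, because any permutation of a Parseval frame is again a Parseval frame (the frame operator is unaffected by reindexing) and the defining relation $\|f_i\| = 1/\sqrt{q_i}$ is stable under carrying the permutation along to the $q_i$. By Proposition~2.1(i) each $q_i$ is a finite real number with $q_i \ge 1$, so $0 < a_i \le 1$ for every $i$; thus the $a_i$ meet the standing hypotheses of Theorem~\ref{thm4point1} (recall also $N \ge n$).

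Next I would take $S = I_{\mathcal{H}}$, whose eigenvalues are $\lambda_1 = \cdots = \lambda_n = 1$, and verify condition (ii) of Theorem~\ref{thm4point1}. For the majorization inequalities, for each $1 \le k \le n$,
\[
\sum_{i=1}^{k} a_i^2 \;=\; \sum_{i=1}^{k} \frac{1}{q_i} \;\le\; k \;=\; \sum_{i=1}^{k} \lambda_i ,
\]
the inequality being immediate from $1/q_i \le 1$. For the trace equality, Proposition~2.1(ii) gives
\[
\sum_{i=1}^{N} a_i^2 \;=\; \sum_{i=1}^{N} \frac{1}{q_i} \;=\; n \;=\; \sum_{i=1}^{n} \lambda_i .
\]
Hence both requirements in (ii) hold, and Theorem~\ref{thm4point1} produces a frame $F = \{f_i\}_{i=1}^{N}$ for $\mathcal{H}$ with frame operator $S_F = I$, i.e.\ a Parseval frame, satisfying $\|f_i\| = a_i = 1/\sqrt{q_i}$ for all $i$.

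Finally I would undo the initial reindexing: the resulting frame is still Parseval and, in the original labelling, satisfies $\|f_i\| = 1/\sqrt{q_i}$ for all $1 \le i \le N$, so it is a probability uniform Parseval frame, which proves the corollary. I do not expect a genuine obstacle here --- the whole argument is a bookkeeping reduction to Theorem~\ref{thm4point1}, with the two properties of the weight sequence recorded in Proposition~2.1 supplying precisely the majorization inequality (from $q_i \ge 1$) and the trace normalization (from $\sum_i 1/q_i = n$). The only mild point requiring care is respecting the ordering hypothesis of Theorem~\ref{thm4point1}, which is dealt with by the permutation argument above.
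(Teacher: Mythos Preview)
Your proposal is correct and follows essentially the same approach as the paper's own proof: reorder so that the $q_i$ are non-decreasing, set $a_i = 1/\sqrt{q_i}$, apply Theorem~\ref{thm4point1} with $S = I$ using $q_i \ge 1$ for the majorization inequalities and $\sum_i 1/q_i = n$ for the trace equality, and then undo the permutation. Your write-up is in fact slightly more careful than the paper's in explicitly invoking Proposition~2.1 for the two needed properties of the weight sequence.
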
 
\begin{proof}
As in Theorem \ref{thm4point1}, we let $S = I,$ be the $n \times n$ identity matrix. The eigenvalues of $S$ are $\lambda_i =1, \;\; 1\leq i \leq n.$ Now we permute the sequence $\{q_i\}_{i=1}^N$ such that $q'_1 \leq q'_2 \leq \dots \leq q'_k. $\\
Let $$ a'_i = \frac{1}{\sqrt{q'_i}}, \;\;\; 1 \leq i \leq N. $$
Then for $1 \leq k \leq n,$\;\;\; $\displaystyle{\sum_{i=1}^k a_{i}'^2  \leq k = \sum_{i=1}^k \lambda_i }$ and $\displaystyle{\sum_{i=1}^N a_{i}'^2 = \sum_{i=1}^N \frac{1}{q'_i} = n =  \sum_{i=1}^n \lambda_i.} $\\
Therefore, by Theorem \ref{thm4point1}, there exists a frame $\{f_i \}_{i=1}^N$ for $H$  with $I_{n \times n}$ as frame operator and $\|f_i \| = \frac{1}{\sqrt{q'_i}}.$ Thus $\{f_i \}_{i=1}^N$ is a parseval frame. Permuting $\{f_i \}_{i=1}^N$ in a proper way, we get a parseval frame $F = \{f'_i \}_{i=1}^N$ with $\|f'_i \| =  \frac{1}{\sqrt{q_i}}, \;\; 1 \leq i \leq N.$
\end{proof} \hfill{$\square$}   
   
\begin{prop}
The value of $ \mathcal{A}_{P}^{(1)}$ is 1.
\end{prop}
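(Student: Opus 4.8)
The plan is to establish the two inequalities $\mathcal{A}_{P}^{(1)}\ge 1$ and $\mathcal{A}_{P}^{(1)}\le 1$. For the first I would show that $\mathcal{A}_{P}^{(1)}(F,G)\ge 1$ for \emph{every} $(N,n)$ dual pair $(F,G)$, so that the infimum is at least $1$; for the second I would produce a single dual pair realising the value $1$, namely a probability uniform Parseval frame together with its canonical dual, whose existence is furnished by Corollary~\ref{cor4point1}.

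\emph{Lower bound.} Fix an $(N,n)$ dual pair $(F,G)$ and put $M:=\mathcal{A}_{P}^{(1)}(F,G)$. By the formula $M=\max_{i=1}^{N}\tfrac{q_i\left(|\langle f_i,g_i\rangle|+\|f_i\|\,\|g_i\|\right)}{2}$ established earlier, and since Cauchy--Schwarz gives $\|f_i\|\,\|g_i\|\ge |\langle f_i,g_i\rangle|$, we obtain $M\ge q_i|\langle f_i,g_i\rangle|$, i.e.\ $|\langle f_i,g_i\rangle|\le M/q_i$ for each $i$. Now invoke the two ``trace-type'' normalisations available for a dual pair: $\sum_{i=1}^{N}\langle g_i,f_i\rangle=n$ (as recorded in the Preliminaries, since $\operatorname{tr}(I)=n$) and $\sum_{i=1}^{N}\tfrac{1}{q_i}=n$ (the weight-number proposition). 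The triangle inequality then yields $n=\left|\sum_{i=1}^{N}\langle g_i,f_i\rangle\right|\le \sum_{i=1}^{N}|\langle g_i,f_i\rangle|\le M\sum_{i=1}^{N}\tfrac{1}{q_i}=Mn$, hence $M\ge 1$. Taking the infimum over all dual pairs gives $\mathcal{A}_{P}^{(1)}\ge 1$.

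\emph{Upper bound.} Let $F=\{f_i\}_{i=1}^{N}$ be a probability uniform Parseval frame as provided by Corollary~\ref{cor4point1}, so that $\|f_i\|=1/\sqrt{q_i}$ and $S_F=I$. Its canonical dual is $S_F^{-1}F=F$, whence $(F,F)$ is an $(N,n)$ dual pair, and for it $\langle f_i,f_i\rangle=\|f_i\|^2=1/q_i$ and $\|f_i\|\,\|f_i\|=1/q_i$; hence each term in the maximum equals $\tfrac{q_i\left(1/q_i+1/q_i\right)}{2}=1$, so $\mathcal{A}_{P}^{(1)}(F,F)=1$. Consequently $\mathcal{A}_{P}^{(1)}\le \mathcal{A}_{P}^{(1)}(F,F)=1$, and together with the lower bound this proves $\mathcal{A}_{P}^{(1)}=1$.

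There is no genuine obstacle here; the one point worth flagging is the recognition of \emph{which} quantities are rigid --- the numerator is bounded below by $2|\langle f_i,g_i\rangle|$, and the two identities $\sum_i\langle g_i,f_i\rangle=n$ and $\sum_i 1/q_i=n$ force $M\ge 1$, with equality precisely when $\|f_i\|\,\|g_i\|=|\langle f_i,g_i\rangle|$ for all $i$ and $q_i|\langle f_i,g_i\rangle|$ is constant in $i$, conditions the uniform Parseval frame paired with itself satisfies. The remaining manipulations are routine.
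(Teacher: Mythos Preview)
Your proof is correct and follows essentially the same route as the paper: bound $\mathcal{A}_{P}^{(1)}(F,G)$ below by $\max_i q_i|\langle f_i,g_i\rangle|$ via Cauchy--Schwarz, then combine the identities $\sum_i\langle g_i,f_i\rangle=n$ and $\sum_i 1/q_i=n$ to force the value to be at least $1$, and attain $1$ with a probability uniform Parseval frame and its canonical dual. The only cosmetic difference is that you argue the lower bound directly whereas the paper phrases it as a contradiction.
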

\begin{proof}
For any $(N,n)$ dual pair $(F,G)$, it is easy to see that $$ \mathcal{A}_{P}^{(1)}(F,G) = \textit{max}_{i=1}^N \;\; \frac{q_i\bigg\{| \langle f_i,g_i \rangle | + \|f_i\|\; \|g_i\|\bigg\}}{2}  \geq max_{i=1}^N\;\; q_i|\langle f_i,g_i \rangle| .$$
Claim: Optimality occurs when $|\langle f_i,g_i \rangle| = \|f_i\|\;\|g_i\| = \frac{1}{q_i},\;\;1 \leq i \leq N.$\\
When $|\langle f_i,g_i \rangle| = \|f_i\|\;\|g_i\| = \frac{1}{q_i},\;\;\text{for all}\; 1 \leq i \leq N,$  then $\mathcal{A}_{P}^{(1)}(F,G) =1.$\\
Let $(\tilde{F}, \tilde{G})$ be any dual pair in $\mathcal{H},$ where $\tilde{F} = \{ \tilde{f_i} \}_{i=1}^N \;\;\text{and}\;\; \tilde{G} = \{ \tilde{f_i} \}_{i=1}^N.$\\
If $\mathcal{A}_{P}^{(1)}(\tilde{F},\tilde{G}) < 1,$ then $max_{i=1}^N q_i |\langle \tilde{f}_i,\tilde{g}_i \rangle| \leq \textit{max}_{i=1}^N \;\; q_i\frac{{| \langle \tilde{f}_i, \tilde{g}_i \rangle | + \|\tilde{f}_i\|\; \|\tilde{g}_i\|}}{2} < 1.$ This implies $q_i |\langle \tilde{f}_i,\tilde{g}_i \rangle| < 1 , \;\; 1\leq i \leq N.$ Therefore, $ n= \sum_{i=1}^N \langle \tilde{f}_i,\tilde{g}_i \rangle \leq \sum_{i=1}^N |\langle \tilde{f}_i,\tilde{g}_i \rangle| < \sum_{i=1}^N \frac{1}{q_i} =n. $ This is not possible. Therefore $\mathcal{A}_{P}^{(1)}(\tilde{F},\tilde{G}) \geq 1 ,$ for any dual pair $(\tilde{F},\tilde{G})$ with weight number sequence $\{q_i\}_{i=1}^N.$ Hence $ \mathcal{A}_{P}^{(1)} \geq 1.$\\
If $F$ be a probability uniform parsevel frame and $G$ be its canonical dual, then $\mathcal{A}_{P}^{(1)}(F,G) =1.$ Therefore the value of $ \mathcal{A}_{P}^{(1)}$ is 1.\\

\end{proof} \hfill{$\square$}
\begin{rem}
For a Hilbert space $\mathcal{H}$ of dimension n and for $N \geq n,$
$$ \zeta_{P}^{(1)} = \left\{ (F,G) : |\langle f_i,g_i \rangle| = \|f_i\|\;\|g_i\| = \frac{1}{q_i},\;\;1 \leq i \leq N \right\}.$$
\end{rem}
The next proposition proves that any 1-erasure PASOD pair will remain an 1-erasure PASOD pair under any unitary map.
\begin{prop}
Let $(F,G)$ be a $(N,n)$ dual pair in $\mathcal{H}.$ Let $U $ be a unitary opeartor in $\mathcal{H}.$ $\{q_i\}_{i=1}^N$ be a weight number sequence given by  (\ref{eqn2point2}). Then $(F,G) \in \mathcal{A}_{P}^{(1)}, $ if and only if $(UF,UG) \in \mathcal{A}_{P}^{(1)}.$
\end{prop}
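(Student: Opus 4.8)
The plan is to reduce everything to the explicit formula for the $1$-erasure averaged spectral error obtained in Proposition 3.1, exploiting that a unitary operator preserves both inner products and norms. Throughout, I read the statement ``$(F,G)\in\mathcal{A}_P^{(1)}$'' as ``$(F,G)$ is a $1$-erasure PASOD-pair'', i.e. $(F,G)\in\zeta_P^{(1)}$.

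First I would check that $(UF,UG)$ is again a legitimate $(N,n)$ dual pair in $\mathcal{H}$, not merely a pair of frames. Since $U$ is unitary, $UF=\{Uf_i\}_{i=1}^N$ is a frame for $\mathcal{H}$; and for every $f\in\mathcal{H}$, writing $f=U(U^*f)$ and using $\langle Uf_i,f\rangle=\langle f_i,U^*f\rangle$, one gets $f=U\sum_i\langle U^*f,f_i\rangle g_i=\sum_i\langle f,Uf_i\rangle Ug_i$, and symmetrically $f=\sum_i\langle f,Ug_i\rangle Uf_i$. Hence $UG=\{Ug_i\}_{i=1}^N$ is a dual of $UF$ and $(UF,UG)$ is an $(N,n)$ dual pair, so it is meaningful to ask whether it lies in $\zeta_P^{(1)}$. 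Note also that the class of $(N,n)$ dual pairs over which the infimum $\mathcal{A}_P^{(1)}$ is taken is the same before and after applying $U$, so the threshold constant is unaffected.

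Next I would compute, via Proposition 3.1,
\[
\mathcal{A}_P^{(1)}(UF,UG)=\max_{i=1}^{N}\frac{q_i\bigl\{|\langle Uf_i,Ug_i\rangle|+\|Uf_i\|\,\|Ug_i\|\bigr\}}{2}.
\]
Because $U^*U=I$, each term satisfies $\langle Uf_i,Ug_i\rangle=\langle f_i,g_i\rangle$, $\|Uf_i\|=\|f_i\|$ and $\|Ug_i\|=\|g_i\|$, so the right-hand side coincides term by term with $\mathcal{A}_P^{(1)}(F,G)$; thus $\mathcal{A}_P^{(1)}(UF,UG)=\mathcal{A}_P^{(1)}(F,G)$. (Equivalently, one could invoke the Remark that $\zeta_P^{(1)}=\{(F,G):|\langle f_i,g_i\rangle|=\|f_i\|\|g_i\|=1/q_i,\ 1\le i\le N\}$ and observe that each of these three quantities is invariant under $f_i\mapsto Uf_i$, $g_i\mapsto Ug_i$.)

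Finally, since $\mathcal{A}_P^{(1)}$ is the fixed infimum constant (in fact equal to $1$ by the preceding proposition), the chain
\[
(F,G)\in\zeta_P^{(1)}\iff \mathcal{A}_P^{(1)}(F,G)=\mathcal{A}_P^{(1)}\iff \mathcal{A}_P^{(1)}(UF,UG)=\mathcal{A}_P^{(1)}\iff (UF,UG)\in\zeta_P^{(1)}
\]
gives the claim. There is no genuinely hard step: the only points requiring care are verifying that $(UF,UG)$ is actually a dual pair (handled by the inner-product computation above) and that optimality is measured against the same reference value in both cases, both of which are immediate consequences of unitarity. The applied content is simply the unitary invariance of the averaged spectral error functional.
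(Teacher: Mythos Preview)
Your proof is correct and follows essentially the same approach as the paper: both arguments reduce to the identity $\mathcal{A}_P^{(1)}(UF,UG)=\mathcal{A}_P^{(1)}(F,G)$, obtained from unitary invariance of inner products and norms applied termwise to the formula of Proposition~3.1. Your write-up is in fact more complete than the paper's, since you explicitly verify that $(UF,UG)$ is a dual pair and spell out the final chain of equivalences, which the paper leaves implicit.
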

\begin{proof}
It can be easily seen that 
\begin{align*}
\mathcal{A}_P^{(1)}(UF,UG) &= max_{i=1}^N \frac{q_i\bigg\{| \langle Uf_i,Ug_i \rangle | + \|Uf_i\|\; \|Ug_i\| \bigg\}}{2} \\&= max_{i=1}^N \frac{q_i \bigg\{| \langle f_i,g_i \rangle | + \|f_i\|\; \|g_i\| \bigg\}}{2} \\&=\mathcal{A}_{P}(F,G). \end{align*}
Hence the result follows.
\end{proof}\hfill{$\square$}

\noindent
Now, we are going to give a necessary and sufficient condition for  1-erasure PSOD-pair and 1-erasure POD-pair.
\begin{prop} \label{prop4point3}
An (N,n) dual pair $(F,G)$ in  $\mathcal{H}$ with  weight number sequence $\{q_i\}_{i=1}^N,$  given by  (\ref{eqn2point2}), is an 1-erasure PSOD-pair if and only if \;$| \langle f_i, g_i \rangle | = \langle f_i, g_i \rangle = \frac{1}{q_i},\;\text{for all} \; 1\leq i \leq N.$
\end{prop}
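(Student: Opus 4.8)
The plan is to reduce $r_P^{(1)}(F,G)$ to an explicit scalar quantity, identify the global infimum $r_P^{(1)}$, and then read off the stated pointwise identity from the equality case of a short chain of inequalities. For $1$-erasure at position $i$ the error operator $E_{\Lambda,(F,G)}f = q_i\langle f,f_i\rangle g_i$ has rank one, so its only possibly nonzero eigenvalue is $q_i\langle f_i,g_i\rangle$, giving $\rho(E_{\Lambda,(F,G)}) = q_i|\langle f_i,g_i\rangle|$. Taking the maximum over $i$ yields $r_P^{(1)}(F,G) = \textit{max}_{1\le i\le N}\, q_i|\langle f_i,g_i\rangle|$, exactly as in the PASOD/PSOD-frame computations.

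Next I would pin down $r_P^{(1)}=1$. Using $\sum_{i=1}^N\langle g_i,f_i\rangle = n$ (hence $\sum_{i=1}^N\langle f_i,g_i\rangle = n$ as well) together with $\sum_{i=1}^N \frac{1}{q_i} = n$ from Proposition~2.1(ii), and the pointwise bound $q_i|\langle f_i,g_i\rangle| \le r_P^{(1)}(F,G)$, one gets
\begin{align*}
n \;=\; \sum_{i=1}^N \langle f_i, g_i\rangle \;\le\; \sum_{i=1}^N |\langle f_i, g_i\rangle| \;\le\; \sum_{i=1}^N \frac{r_P^{(1)}(F,G)}{q_i} \;=\; n\, r_P^{(1)}(F,G),
\end{align*}
so $r_P^{(1)}(F,G)\ge 1$ for every $(N,n)$ dual pair, whence $r_P^{(1)}\ge 1$. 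For the reverse inequality (and attainment), take a probability uniform Parseval frame $F$, which exists by Corollary~\ref{cor4point1}; its canonical dual is $G=F$, and $q_i\langle f_i,g_i\rangle = q_i\|f_i\|^2 = 1$ for all $i$, so $r_P^{(1)}(F,G)=1$. Thus $r_P^{(1)}=1$.

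For the ``only if'' direction, suppose $(F,G)$ is a $1$-erasure PSOD-pair, i.e. $r_P^{(1)}(F,G) = r_P^{(1)} = 1$. Then $|\langle f_i,g_i\rangle|\le 1/q_i$ for every $i$, and the displayed chain collapses to equalities. Equality in $\sum_i\langle f_i,g_i\rangle = \sum_i|\langle f_i,g_i\rangle|$, where the left-hand side equals the real number $n$, forces (applying $\mathrm{Re}(z)\le|z|$ termwise) each $\langle f_i,g_i\rangle$ to be real and nonnegative, so $\langle f_i,g_i\rangle = |\langle f_i,g_i\rangle|$. Equality in $\sum_i|\langle f_i,g_i\rangle| = \sum_i 1/q_i$ together with $|\langle f_i,g_i\rangle|\le 1/q_i$ forces $|\langle f_i,g_i\rangle| = 1/q_i$ for every $i$. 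Combining, $\langle f_i,g_i\rangle = |\langle f_i,g_i\rangle| = 1/q_i$ for all $1\le i\le N$. Conversely, if this identity holds then $r_P^{(1)}(F,G) = \textit{max}_i\, q_i|\langle f_i,g_i\rangle| = 1 = r_P^{(1)}$, so $(F,G)$ is a $1$-erasure PSOD-pair.

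The only mildly delicate step is the termwise extraction from the two equalities: the first equality is what delivers reality and nonnegativity of each $\langle f_i,g_i\rangle$, and the second is what delivers the exact value $1/q_i$; separating these two contributions cleanly is the crux, while the computation of $r_P^{(1)}(F,G)$ and the value $r_P^{(1)}=1$ are routine.
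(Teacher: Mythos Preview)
Your proof is correct and follows essentially the same approach as the paper: compute $r_P^{(1)}(F,G)=\max_i q_i|\langle f_i,g_i\rangle|$, establish $r_P^{(1)}=1$ via the chain $n=\sum_i\langle f_i,g_i\rangle\le\sum_i|\langle f_i,g_i\rangle|\le\sum_i 1/q_i=n$ (with attainment by a probability uniform Parseval frame), and then read off the termwise identities from the equality case. Your extraction of the two equalities is in fact spelled out more carefully than in the paper, which simply asserts the conclusion ``from the facts $\sum_i\langle f_i,g_i\rangle=n$ and $\sum_i 1/q_i=n$.''
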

\begin{proof}
For any $(N,n)$ dual pair $(F,G)$ in  $\mathcal{H}$, $r_{P}^{(1)}(F,G) = max_{i=1}^N \; q_i|\langle f_i,g_i \rangle |.$\\
Claim: Optimality occurs when $| \langle f_i, g_i \rangle | = \langle f_i, g_i \rangle = \frac{1}{q_i},\;\text{for all} \; 1\leq i \leq N.$\\
For a dual pair $(F,G),$ when  $| \langle f_i, g_i \rangle | = \langle f_i, g_i \rangle = \frac{1}{q_i},\; 1\leq i \leq N$ holds,then  $r_{P}^{(1)}(F,G) =1.$\\
Let $(\tilde{F}, \tilde{G})$ be any dual pair in $\mathcal{H},$ where $\tilde{F} = \{ \tilde{f_i} \}_{i=1}^N \;\;\text{and}\;\; \tilde{G} = \{ \tilde{f_i} \}_{i=1}^N.$\\
If $r_{P}^{(1)}(\tilde{F},\tilde{G}) < 1,$ then $max_{i=1}^N \; q_i|\langle \tilde{f}_i,\tilde{g}_i \rangle | < 1.$ Therefore, $\displaystyle{ n = \sum_{i=1}^N \langle \tilde{f}_i,\tilde{g}_i \rangle \leq \sum_{i=1}^N | \langle \tilde{f}_i,\tilde{g}_i \rangle| < \sum_{i=1}^N \frac{1}{q_i} =n}. $ This gives a contradiction. As a consequence, we can say, $r_{P}^{(1)}(\tilde{F},\tilde{G}) \geq 1 ,$ for any dual pair $(\tilde{F},\tilde{G})$ with weight number sequence $\{q_i\}_{i=1}^N.$ Hence $r_{P}^{(1)} \geq 1.$\\
If $F$ be a probability uniform parsevel frame and $G$ be its canonical dual, then $r_{P}^{(1)}(F,G) =1.$ Therefore, the value of $ {r}_{P}^{(1)}$ is 1.\\
Accordingly, if $(F,G)$ be an 1-erasure probabilistic spectrally optimal dual pair if and only if $max_{i=1}^N q_i |\langle f_i, g_i \rangle | = 1.$ From the facts $\sum_{i=1}^N \langle f_i, g_i \rangle = n$ and $ \sum_{i=1}^N \frac{1}{q_i} = n,$  $(F,G)$ is an 1-erasure PSOD-pair if and only if $\langle f_i, g_i \rangle = |\langle f_i, g_i \rangle| = \frac{1}{q_i},\;\;\;1 \leq i \leq N.$
\end{proof} \hfill{$\square$}

\begin{prop} \label{prop4point4}
An (N,n) dual pair $(F,G)$ in  $\mathcal{H}$ with  weight number sequence $\{q_i\}_{i=1}^N,$  given by  (\ref{eqn2point2}), is an 1-erasure  POD-pair if and only if $ \langle f_i, g_i \rangle = \|f_i \|\; \|g_i\| = \frac{1}{q_i},$ for all $1\leq i \leq N.$
\end{prop}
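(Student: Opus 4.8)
The plan is to characterize 1-erasure POD-pairs by first identifying the minimal value of $\mathcal{O}_P^{(1)}$ over all $(N,n)$ dual pairs, in complete analogy with the proof of Proposition \ref{prop4point3}. For any $(N,n)$ dual pair $(F,G)$ one has, from the formula for the error operator in 1-erasure, $\mathcal{O}_P^{(1)}(F,G) = \textit{max}_{i=1}^N\; q_i\|f_i\|\,\|g_i\|$. The first step is to show this quantity is always $\geq 1$: since $\langle f_i,g_i\rangle \leq |\langle f_i,g_i\rangle| \leq \|f_i\|\,\|g_i\|$ by Cauchy--Schwarz, if $\mathcal{O}_P^{(1)}(F,G) < 1$ then $q_i\|f_i\|\,\|g_i\| < 1$ for every $i$, whence $n = \sum_{i=1}^N \langle f_i,g_i\rangle \leq \sum_{i=1}^N \|f_i\|\,\|g_i\| < \sum_{i=1}^N \frac{1}{q_i} = n$ using Proposition 2.1(ii), a contradiction. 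Hence $\mathcal{O}_P^{(1)} \geq 1$.

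Next I would exhibit a dual pair achieving the value $1$, which is already available: by Corollary \ref{cor4point1} a probability uniform Parseval frame $F$ exists, and for $F$ with its canonical dual $G = \{f_i\}_{i=1}^N$ (since $S_F = I$) we have $\|f_i\| = \|g_i\| = \frac{1}{\sqrt{q_i}}$, so $q_i\|f_i\|\,\|g_i\| = 1$ for all $i$ and $\mathcal{O}_P^{(1)}(F,G) = 1$. Therefore $\mathcal{O}_P^{(1)} = 1$, and a dual pair $(F,G)$ is an 1-erasure POD-pair if and only if $\textit{max}_{i=1}^N q_i\|f_i\|\,\|g_i\| = 1$, equivalently $q_i\|f_i\|\,\|g_i\| \leq 1$ for all $i$.

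The final step is to upgrade the pointwise inequality $q_i\|f_i\|\,\|g_i\| \leq 1$ to the claimed equalities $\langle f_i,g_i\rangle = \|f_i\|\,\|g_i\| = \frac{1}{q_i}$. Suppose $(F,G)$ is a POD-pair. Then for each $i$, $\langle f_i,g_i\rangle \leq |\langle f_i,g_i\rangle| \leq \|f_i\|\,\|g_i\| \leq \frac{1}{q_i}$. Summing over $i$ and invoking $\sum_{i=1}^N \langle f_i,g_i\rangle = n = \sum_{i=1}^N \frac{1}{q_i}$ forces every inequality in the chain to be an equality for every $i$: $\langle f_i,g_i\rangle = |\langle f_i,g_i\rangle| = \|f_i\|\,\|g_i\| = \frac{1}{q_i}$. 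Conversely, if these equalities hold then $q_i\|f_i\|\,\|g_i\| = 1$ for all $i$, so $\mathcal{O}_P^{(1)}(F,G) = 1 = \mathcal{O}_P^{(1)}$ and $(F,G)$ is a POD-pair. The only mild subtlety — the ``main obstacle'' such as it is — is the summation argument: one must be careful that the real-part inequality $\langle f_i,g_i\rangle \leq |\langle f_i,g_i\rangle|$ is genuinely needed (the trace identity gives the sum of the complex numbers $\langle f_i,g_i\rangle$ equal to $n$, which is real, so one passes to real parts first), and that equality in Cauchy--Schwarz plus equality $\langle f_i,g_i\rangle = |\langle f_i,g_i\rangle|$ together pin down $g_i$ as a nonnegative multiple of $f_i$; this is what distinguishes the POD-pair condition from the strictly weaker PSOD-pair condition of Proposition \ref{prop4point3}.
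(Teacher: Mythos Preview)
Your proposal is correct and follows essentially the same approach as the paper: both arguments compute $\mathcal{O}_P^{(1)}(F,G)=\max_i q_i\|f_i\|\,\|g_i\|$, derive the lower bound $\mathcal{O}_P^{(1)}\ge 1$ from the trace identity $\sum_i\langle f_i,g_i\rangle=n=\sum_i 1/q_i$ together with Cauchy--Schwarz, realize the value $1$ via a probability uniform Parseval frame (Corollary~\ref{cor4point1}), and then force the termwise equalities by the summation argument. Your write-up is in fact slightly more explicit than the paper's about the chain $\langle f_i,g_i\rangle\le|\langle f_i,g_i\rangle|\le\|f_i\|\,\|g_i\|\le 1/q_i$ and the passage to real parts, but the underlying strategy is identical.
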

\begin{proof}
For any $(N,n)$ dual pair $(F,G)$  with  weight number sequence $\{q_i\}_{i=1}^N$ in  $\mathcal{H}$, $\mathcal{O}_{F,G}^{(1)} =\\ max_{i=1}^N q_i\;\|f_i \|\; \|g_i\| .$\\
Claim: Optimality occurs when $\|f_i\|\;\|g_i\| = \frac{1}{q_i},\;\text{for all} \; 1\leq i \leq N.$\\
When $\|f_i\|\;\|g_i\| = \frac{1}{q_i},\;\;1 \leq i \leq N$ holds, then $\mathcal{O}_{P}^{(1)}(F,G) =1.$\\
For any dual pair $(\tilde{F}, \tilde{G}),$ if $\mathcal{O}_{P}^{(1)}(\tilde{F}, \tilde{G}) < 1,$ then $max_{i=1}^N \; q_i\| \tilde{f}_i\|\;\|\tilde{g}_i \| < 1.$ Therefore, $ n = \sum_{i=1}^N \langle \tilde{f}_i,\tilde{g}_i \rangle \leq \sum_{i=1}^N \|\tilde{f}_i\|\;\|\tilde{g}_i \| < \sum_{i=1}^N \frac{1}{q_i} =n. $ This gives a contradiction. Therefore, $\mathcal{O}_{P}^{(1)}(\tilde{F},\tilde{G}) \geq 1 ,$ for any dual pair $(\tilde{F},\tilde{G})$ with weight number sequence $\{q_i\}_{i=1}^N.$ Hence $\mathcal{O}_{P}^{(1)} \geq 1.$\\
If $F$ be a probability uniform parsevel frame and $G$ be its canonical dual, then $\mathcal{O}_{P}^{(1)}(F,G) =1.$ Therefore the value of $ \mathcal{O}_{P}^{(1)}$ is 1.\\
Consequently, if $(F,G)$ be an 1-erasure POD-pair if and only if $max_{i=1}^N q_i \| f_i\|\;\| g_i \| = 1.$ From the facts $\sum_{i=1}^N \langle f_i, g_i \rangle = n$ and $ \sum_{i=1}^N \frac{1}{q_i} = n,$ we have  $(F,G)$ be an 1-erasure POD-pair if and only if $\langle f_i, g_i \rangle =\|f_i\|\;\|g_i\| = \frac{1}{q_i},\;\;\;1 \leq i \leq N.$
\end{proof} \hfill{$\square$}

The following theorem gives a relation between PSOD-pair and PASOD-pair for 1-erasure.

\begin{thm} \label{thm4point1}
Let $F= \{f_i\}_{i=1}^N$ be a tight frame for  $\mathcal{H}$  with  weight number sequence $\{q_i\}_{i=1}^N$  given by  (\ref{eqn2point2}). Then, $(F, S_{F}^{-1}F )$ be an 1-erasure PSOD-pair if and only if  $(F, S_{F}^{-1}F ) \in \zeta_{P}^{(1)}.$ 
\end{thm}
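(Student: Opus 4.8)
The plan is to unwind both sides of the equivalence through the characterizations already established in Propositions \ref{prop4point3} and in the Remark describing $\zeta_P^{(1)}$, using the tightness of $F$ to pin down the canonical dual explicitly. Write $F$ for a tight frame with bound $A$, so that $S_F = A\,I$ and the canonical dual is $S_F^{-1}F = \{\tfrac1A f_i\}_{i=1}^N$. For this particular dual pair we have $\langle f_i, \tfrac1A f_i\rangle = \tfrac1A\|f_i\|^2 = \|f_i\|\,\|\tfrac1A f_i\|$ for every $i$, so the inner product and the product of norms coincide; the only content left in either optimality notion is the single numerical condition $\tfrac1A\|f_i\|^2 = \tfrac1{q_i}$ for all $i$, i.e. $q_i\|f_i\|^2 = A$ for all $i$.

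First I would prove the forward direction: assume $(F, S_F^{-1}F)$ is a 1-erasure PSOD-pair. By Proposition \ref{prop4point3}, this forces $\langle f_i, \tfrac1A f_i\rangle = \tfrac1{q_i}$ for all $i$, that is $q_i\|f_i\|^2 = A$ for all $i$. Then $\|f_i\|\,\|\tfrac1A f_i\| = \tfrac1A\|f_i\|^2 = \tfrac1{q_i}$ as well, and also $|\langle f_i,\tfrac1A f_i\rangle| = \tfrac1{q_i}$, so the triple condition of the Remark characterizing $\zeta_P^{(1)}$ is satisfied, giving $(F, S_F^{-1}F)\in\zeta_P^{(1)}$. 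For the converse, assume $(F, S_F^{-1}F)\in\zeta_P^{(1)}$. By the Remark this means $|\langle f_i,\tfrac1A f_i\rangle| = \|f_i\|\,\|\tfrac1A f_i\| = \tfrac1{q_i}$ for all $i$; in particular $\langle f_i,\tfrac1A f_i\rangle = \tfrac1A\|f_i\|^2$ is a nonnegative real number equal to $\tfrac1{q_i}$, so $\langle f_i,\tfrac1A f_i\rangle = |\langle f_i,\tfrac1A f_i\rangle| = \tfrac1{q_i}$, which is exactly the condition of Proposition \ref{prop4point3}, hence $(F, S_F^{-1}F)$ is a 1-erasure PSOD-pair.

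I expect no serious obstacle here, since the tightness hypothesis collapses the gap between $|\langle f_i,g_i\rangle|$ and $\|f_i\|\|g_i\|$ for the canonical dual, making PSOD-optimality and PASOD-optimality coincide on this dual pair; the only mild care needed is to observe that $\langle f_i, \tfrac1A f_i\rangle$ is automatically real and nonnegative, so the absolute-value condition is equivalent to the plain inner-product condition. One could also streamline the argument by simply noting that for the canonical dual of a tight frame, $\mathcal{A}_P^{(1)}(F,S_F^{-1}F) = r_P^{(1)}(F,S_F^{-1}F) = \max_i \tfrac{q_i}{A}\|f_i\|^2$, so that $\mathcal{A}_P^{(1)}(F,S_F^{-1}F) = 1 \iff r_P^{(1)}(F,S_F^{-1}F) = 1$, and then invoke the fact (proved in the Remark and Proposition \ref{prop4point3}) that $\mathcal{A}_P^{(1)} = r_P^{(1)} = 1$ is the common optimal value. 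I would present whichever of these two routes reads more cleanly alongside the preceding propositions.
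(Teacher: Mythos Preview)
Your proposal is correct and follows essentially the same path as the paper: both directions reduce to the numerical condition $\tfrac{1}{A}\|f_i\|^2 = \tfrac{1}{q_i}$ for all $i$, with the forward implication quoting Proposition~\ref{prop4point3} exactly as the paper does. The only minor difference is in the converse: you invoke the Remark's explicit description of $\zeta_P^{(1)}$ to read off the per-index condition immediately, whereas the paper starts from $\mathcal{A}_P^{(1)}(F,S_F^{-1}F)=\max_i \tfrac{q_i}{A}\|f_i\|^2 = 1$ and uses the summation argument $n=\sum_i \tfrac{1}{A}\|f_i\|^2 \le \sum_i \tfrac{1}{q_i}=n$ to force equality at every index. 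Your route is a bit cleaner since it leverages a result already stated; the paper's is more self-contained. Your alternative one-line argument via $\mathcal{A}_P^{(1)}(F,S_F^{-1}F)=r_P^{(1)}(F,S_F^{-1}F)$ is also valid and perhaps the most transparent formulation.
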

\begin{proof}
Let $F= \{f_i\}_{i=1}^N$ be a tight frame with tight bound $A.$ \\
If $(F, S_{F}^{-1}F )$ be an 1-erasure PSOD-pair in  $\mathcal{H},$ then by proposition (\ref{prop4point3}), $\langle f_i , S_{F}^{-1}f_i \rangle = \frac{1}{A}\|f_i\|^2 = \frac{1}{q_i} .$\\
Therefore, $\mathcal{A}_{P}^{(1)}(F,S_{F}^{-1}F)=  max_{i=1}^N \frac{q_i\bigg\{| \langle f_i,\frac{1}{A}f_i \rangle | + \|f_i\|\; \|\frac{1}{A}f_i \| \bigg\}}{2} =  max_{i=1}^N \frac{q_i}{A}\|f_i\|^2 = 1 = \mathcal{A}_{P}^{(1)}.$\\~\\
Conversely, if $ (F, S_{F}^{-1}F ) \in \zeta_{P}^{(1)},$ then $\mathcal{A}_{P}^{(1)}(F,S_{F}^{-1}F)=  max_{i=1}^N \frac{q_i}{A}\|f_i\|^2 =1.$ If for any $j,\;\;1 \leq j \leq N,$ $\frac{q_j}{A}\|f_j\|^2 < 1,$ then $n = \sum_{i=1}^N \langle f_i, \frac{1}{A} f_i \rangle = \sum_{i=1}^N \frac{1}{A}\|f_i\|^2 < \sum_{i=1}^N \frac{1}{q_i} = n. $ This is not possible. Therefore, $\frac{1}{A}\|f_i\|^2 = \frac{1}{q_i},\;\; 1 \leq i \leq N$ and hence $r_{P}^{(1)} (F,S_{F}^{-1}F) = max_{i=1}^N \frac{q_i}{A}\|f_j\|^2  = 1. $ So,  $(F, S_{F}^{-1}F )$ is an 1-erasure PSOD-pair in  $\mathcal{H}$.
\end{proof}  \hfill{$\square$}

\begin{thm}\label{thm4point2}
Let $F= \{f_i\}_{i=1}^N$ be a tight frame for  $\mathcal{H}$ with  weight number sequence $\{q_i\}_{i=1}^N,$  given by  (\ref{eqn2point2}). Then, $(F, S_{F}^{-1}F )$ be an 1-erasure POD-pair if and only if  $(F, S_{F}^{-1}F ) \in \zeta_{P}^{(1)}.$ 
\end{thm}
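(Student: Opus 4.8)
The plan is to mirror the structure of Theorem \ref{thm4point1}, exploiting the fact that $\mathcal{O}_P^{(1)}$ and $\mathcal{A}_P^{(1)}$ share the common minimal value $1$ and that both optimality conditions collapse to the same equalities $\langle f_i, g_i\rangle = \|f_i\|\,\|g_i\| = \frac{1}{q_i}$ by Propositions \ref{prop4point3} and \ref{prop4point4}. Let $F$ be a tight frame with tight bound $A$, so the canonical dual is $S_F^{-1}F = \{\frac{1}{A}f_i\}_{i=1}^N$, and note $\langle f_i, \frac{1}{A}f_i\rangle = \frac{1}{A}\|f_i\|^2 = \|f_i\|\,\|\frac{1}{A}f_i\|$ for every $i$, so for this particular pair the quantities $\mathcal{O}_P^{(1)}(F, S_F^{-1}F)$, $r_P^{(1)}(F,S_F^{-1}F)$ and $\mathcal{A}_P^{(1)}(F,S_F^{-1}F)$ all equal $\max_{i=1}^N \frac{q_i}{A}\|f_i\|^2$.

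For the forward direction, suppose $(F, S_F^{-1}F)$ is a $1$-erasure POD-pair. By Proposition \ref{prop4point4} the optimal value $\mathcal{O}_P^{(1)} = 1$ is attained, which forces $\|f_i\|\,\|\frac{1}{A}f_i\| = \frac{1}{q_i}$, i.e. $\frac{q_i}{A}\|f_i\|^2 = 1$ for all $i$. Then $\mathcal{A}_P^{(1)}(F, S_F^{-1}F) = \max_{i=1}^N \frac{q_i}{A}\|f_i\|^2 = 1 = \mathcal{A}_P^{(1)}$, hence $(F, S_F^{-1}F) \in \zeta_P^{(1)}$.

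For the converse, suppose $(F, S_F^{-1}F) \in \zeta_P^{(1)}$, so $\mathcal{A}_P^{(1)}(F, S_F^{-1}F) = \max_{i=1}^N \frac{q_i}{A}\|f_i\|^2 = 1$. I would then run the same trace argument as in Theorem \ref{thm4point1}: if $\frac{q_j}{A}\|f_j\|^2 < 1$ for some $j$, then using $\sum_{i=1}^N \langle f_i, \frac{1}{A}f_i\rangle = n$ and $\sum_{i=1}^N \frac{1}{q_i} = n$ (Proposition 2.1(ii)) we get $n = \sum_{i=1}^N \frac{1}{A}\|f_i\|^2 < \sum_{i=1}^N \frac{1}{q_i} = n$, a contradiction. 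Hence $\frac{1}{A}\|f_i\|^2 = \frac{1}{q_i}$ for all $i$, so $\|f_i\|\,\|\frac{1}{A}f_i\| = \frac{1}{q_i}$ and $\mathcal{O}_P^{(1)}(F, S_F^{-1}F) = \max_{i=1}^N \frac{q_i}{A}\|f_i\|^2 = 1 = \mathcal{O}_P^{(1)}$, so $(F, S_F^{-1}F)$ is a $1$-erasure POD-pair.

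There is no serious obstacle here; the argument is essentially a transcription of Theorem \ref{thm4point1} with $r_P^{(1)}$ replaced by $\mathcal{O}_P^{(1)}$ and Proposition \ref{prop4point3} replaced by Proposition \ref{prop4point4}. The only point deserving a word of care is the initial observation that, for the canonical dual of a \emph{tight} frame, the inner product $\langle f_i, \frac{1}{A}f_i\rangle$ is real, nonnegative, and equal to the product of norms, which is exactly what makes all three error measures coincide on this pair and lets the common optimal value $1$ do the work.
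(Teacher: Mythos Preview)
Your proof is correct and follows exactly the approach intended by the paper, which simply states that the argument is similar to Theorem~\ref{thm4point1}. You have carried out precisely that transcription, replacing $r_P^{(1)}$ by $\mathcal{O}_P^{(1)}$ and Proposition~\ref{prop4point3} by Proposition~\ref{prop4point4}, and the additional remark about the coincidence of the three error measures on the canonical dual of a tight frame is a helpful clarification.
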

\begin{proof}
The proof is similar to the Theorem (\ref{thm4point1}).
\end{proof} \hfill{$\square$}

The following theorem gives the equivalent relation of uniqueness between 1-erasure POD-pair and PASOD-pair. Note that, here we use  uniqueness in the sense of unique dual of the tight frame $F.$
\begin{thm} \label{thm4point3}
Let $F= \{f_i\}_{i=1}^N$ be a tight frame for  $\mathcal{H}$ with  weight number sequence $\{q_i\}_{i=1}^N$  given by  (\ref{eqn2point2}). Then, $(F, S_{F}^{-1}F )$ be the unique  POD-pair if and only if  $(F, S_{F}^{-1}F )$  is the unique  PASOD-pair in $\mathcal{H}$ for  1-erasure.
\end{thm}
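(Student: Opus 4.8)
The plan is to reduce the uniqueness statement for PASOD-pairs to the already-established uniqueness statement for POD-pairs (Theorem~\ref{thm4point2}), exactly as Theorem~\ref{thm3point5} is reduced to Theorem~\ref{thm3point4} in the frame setting. First I would fix a tight frame $F=\{f_i\}_{i=1}^N$ with tight bound $A$, so that $S_F^{-1}F=\{\tfrac{1}{A}f_i\}_{i=1}^N$, and recall from Theorem~\ref{thm4point1} and Theorem~\ref{thm4point2} that $(F,S_F^{-1}F)\in\zeta_P^{(1)}$, $(F,S_F^{-1}F)$ being a $1$-erasure POD-pair, and $(F,S_F^{-1}F)$ being a $1$-erasure PSOD-pair are all equivalent, and that in this case $\tfrac{1}{A}\|f_i\|^2=\tfrac{1}{q_i}$ for every $i$. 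Since we are speaking of uniqueness ``in the sense of a unique dual of the fixed frame $F$,'' the competitors are the duals $G=\{\tfrac{1}{A}f_i+h_i\}_{i=1}^N$ with $\sum_i\langle f,f_i\rangle h_i=\sum_i\langle f,h_i\rangle f_i=0$.

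For the forward direction I would assume $(F,S_F^{-1}F)$ is the unique $1$-erasure POD-pair and suppose, for contradiction, that some non-canonical dual $G=\{\tfrac{1}{A}f_i+h_i\}_{i=1}^N$ also lies in $\zeta_P^{(1)}$, i.e. $\mathcal{A}_P^{(1)}(F,G)=1$. As in the proof of Theorem~\ref{thm3point7}, expanding $\mathcal{A}_P^{(1)}(F,G)=\max_i\tfrac{q_i(|\langle f_i,\tfrac{1}{A}f_i+h_i\rangle|+\|f_i\|\,\|\tfrac{1}{A}f_i+h_i\|)}{2}=1=\max_i\tfrac{q_i}{A}\|f_i\|^2$ and using $\tfrac{q_i}{A}\|f_i\|^2=1$ for all $i$ forces $\mathrm{Re}\langle f_i,h_i\rangle\le 0$ for every $i$ (the spectral-radius term alone would otherwise push the max above $1$), and in fact, since the sum $\sum_i\langle f_i,h_i\rangle$ must be $0$ while each $\mathrm{Re}\langle f_i,h_i\rangle\le 0$, we get $\mathrm{Re}\langle f_i,h_i\rangle=0$ for all $i$, so each $q_i\|f_i\|\,\|\tfrac{1}{A}f_i+h_i\|=\sqrt{\tfrac{q_i^2}{A^2}\|f_i\|^4+q_i^2\|f_i\|^2\|h_i\|^2}\ge 1$ with equality iff $h_i=0$. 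Since $G$ is non-canonical, $h_{i_0}\ne 0$ for some $i_0$; then $\mathcal{O}_P^{(1)}(F,G)=\max_i q_i\|f_i\|\,\|\tfrac{1}{A}f_i+h_i\|>1=\mathcal{O}_P^{(1)}(F,S_F^{-1}F)$, but one still needs to produce an honest competing dual that is POD and distinct from the canonical one. The clean way, mirroring Theorem~\ref{thm3point7}, is instead to note that if $\mathrm{Re}\langle f_i,h_i\rangle\le 0$ for all $i$ then for small $\epsilon>0$ the dual $G_\epsilon=\{\tfrac{1}{A}f_i+\epsilon h_i\}_{i=1}^N$ satisfies $\max_i q_i\|f_i\|\,\|\tfrac{1}{A}f_i+\epsilon h_i\|\le\max_i\tfrac{q_i}{A}\|f_i\|^2=1$, hence $\mathcal{O}_P^{(1)}(F,G_\epsilon)\le 1=\mathcal{O}_P^{(1)}(F,S_F^{-1}F)=\mathcal{O}_P^{(1)}$, so $G_\epsilon$ is a $1$-erasure POD-pair with $F$; since $h_{i_0}\ne 0$, $G_\epsilon\ne S_F^{-1}F$, contradicting uniqueness of the POD-pair. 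This establishes the forward implication.

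For the converse I would assume $(F,S_F^{-1}F)$ is the unique $1$-erasure PASOD-pair. By Theorem~\ref{thm4point2}, $(F,S_F^{-1}F)$ is a $1$-erasure POD-pair; I must show it is the \emph{only} POD-pair dual to $F$. Suppose $G=\{\tfrac{1}{A}f_i+h_i\}_{i=1}^N$ is also a POD-pair with $F$, i.e. $\mathcal{O}_P^{(1)}(F,G)=\max_i q_i\|f_i\|\,\|\tfrac{1}{A}f_i+h_i\|=1$. Then $q_i\|f_i\|\,\|\tfrac{1}{A}f_i+h_i\|\le 1$ for all $i$, and combined with $\sum_i\langle f_i,\tfrac{1}{A}f_i+h_i\rangle=n=\sum_i\tfrac{1}{q_i}$ and $|\langle f_i,\tfrac{1}{A}f_i+h_i\rangle|\le\|f_i\|\,\|\tfrac{1}{A}f_i+h_i\|\le\tfrac{1}{q_i}$, the same telescoping-sum argument used throughout Section~4 forces $\langle f_i,\tfrac{1}{A}f_i+h_i\rangle=\|f_i\|\,\|\tfrac{1}{A}f_i+h_i\|=\tfrac{1}{q_i}$ for every $i$; hence $\mathcal{A}_P^{(1)}(F,G)=\max_i\tfrac{q_i(|\langle f_i,\tfrac{1}{A}f_i+h_i\rangle|+\|f_i\|\,\|\tfrac{1}{A}f_i+h_i\|)}{2}=1=\mathcal{A}_P^{(1)}$, so $G\in\zeta_P^{(1)}$ and is a $1$-erasure PASOD-pair dual to $F$. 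By the assumed uniqueness of the PASOD-pair, $G=S_F^{-1}F$. Therefore $S_F^{-1}F$ is the unique POD-pair, completing the proof.

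I expect the main obstacle to be the forward direction: extracting the precise information ``$\mathrm{Re}\langle f_i,h_i\rangle\le 0$ for all $i$'' from the single equation $\mathcal{A}_P^{(1)}(F,G)=1$ and then building the genuinely distinct competing POD-pair $G_\epsilon$ via the $\epsilon$-perturbation, making sure the perturbed family is still a dual (it is, since the $h_i$ span a subspace of valid perturbations) and that the strict inequalities on the non-maximal indices are preserved. The converse direction is essentially bookkeeping with the trace identity $\sum_i\langle f_i,g_i\rangle=n$ and $\sum_i 1/q_i=n$, which already appears verbatim in Propositions~\ref{prop4point3} and~\ref{prop4point4}.
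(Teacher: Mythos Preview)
Your ``converse'' direction (unique PASOD $\Rightarrow$ unique POD) is correct and is in fact a bit slicker than the paper's: you show that any POD dual $G$ of $F$ is automatically a PASOD dual and then invoke the assumed PASOD uniqueness, whereas the paper expands $\|\tfrac{1}{A}f_i+h_i\|^2=\tfrac{1}{A^2}\|f_i\|^2$ to get $\|h_i\|^2+\tfrac{2}{A}\mathrm{Re}\langle f_i,h_i\rangle=0$, sums over $i$, and concludes $\sum_i\|h_i\|^2=0$ directly, never actually using the uniqueness part of the hypothesis.

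Your ``forward'' direction (unique POD $\Rightarrow$ unique PASOD) has a genuine gap. You correctly extract $\mathrm{Re}\langle f_i,h_i\rangle\le 0$ from $\mathcal{A}_P^{(1)}(F,G)=1$, and then $\mathrm{Re}\langle f_i,h_i\rangle=0$ from the trace identity. But precisely because this real part is \emph{zero} and not strictly negative, the $\epsilon$-perturbation you borrow from Theorem~\ref{thm3point7} fails: the cross term in $\|\tfrac{1}{A}f_i+\epsilon h_i\|^2$ vanishes, so
\[
q_i\|f_i\|\,\Bigl\|\tfrac{1}{A}f_i+\epsilon h_i\Bigr\|=\sqrt{1+\epsilon^2 q_i^2\|f_i\|^2\|h_i\|^2}\ge 1,
\]
with strict inequality at any index where $h_i\ne 0$. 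Thus $G_\epsilon$ is \emph{never} a POD-pair unless $G=S_F^{-1}F$, and you obtain no contradiction with POD uniqueness. The argument in Theorem~\ref{thm3point7} works only because there one has $\mathrm{Re}\langle f_i,h_i\rangle<0$ strictly on the maximal set.

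The repair is the step you abandoned one line too early. You already have $q_i|\langle f_i,g_i\rangle|\le 1$ (this is what gave $\mathrm{Re}\langle f_i,h_i\rangle\le 0$), and once $\mathrm{Re}\langle f_i,h_i\rangle=0$ that inequality forces $q_i|\langle f_i,g_i\rangle|=1$. Now feed this back into the PASOD condition: each averaged term is $\le 1$, so $q_i\|f_i\|\,\|g_i\|\le 1$ as well. Combined with your own computation $q_i\|f_i\|\,\|g_i\|=\sqrt{1+q_i^2\|f_i\|^2\|h_i\|^2}\ge 1$, this gives $h_i=0$ for every $i$ directly. This is exactly how the paper finishes that implication, and it never needs to manufacture a competing POD dual.
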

\begin{proof}
Let $F= \{f_i\}_{i=1}^N$ be a tight frame with tight bound $A.$ \\
If $(F, S_{F}^{-1}F )$ is the unique 1-erasure PASOD-pair in  $\mathcal{H},$ then $\mathcal{A}_{P}^{(1)}(F,S_{F}^{-1}F)=  1 = \max_{i=1}^N \tfrac{q_i}{A}\|f_i\|^2.$\\
Using the fact $\sum_{i=1}^N \langle f_i , S_{F}^{-1}f_i \rangle = \sum_{i=1}^N \frac{1}{A}\|f_i\|^2 = n $ and the similar argument as in Proposition(\ref{thm4point1}), we can show that $\frac{1}{A}\|f_i\|^2 = \frac{1}{q_i},$ for all $1 \leq i \leq N.$\\
Now, $\mathcal{O}_{P}^{(1)}(F, S_{F}^{-1}F) = \max_{i=1}^N q_i\|f_i \|\;\|S_{F}^{-1}f_{i}\| = \max_{i=1}^N \frac{q_i}{A}\|f_i\|^2 =1 .$ Therefore, $(F, S_{F}^{-1}F )$ is an 1-erasure POD-pair in  $\mathcal{H}$.\\
Let $G= \left\{\frac{1}{A}f_i + h_i\right\}_{i=1}^N$ be a dual of $F$ such that $(F,G)$ is an 1-erasure POD-pair in $\mathcal{H}$. Therefore, $1 = \mathcal{O}_{P}^{(1)}(F,G) =  max_{i=1}^N q_i\|f_i\|\;\|\frac{1}{A}f_i + h_i\| .$\\
If for any j, $1\leq j \leq N,$ \;$q_i\|f_i\|\;\|\frac{1}{A}f_i + h_i\| < 1 ,$ then 
$$n= \sum_{i=1}^N \frac{1}{q_i} > \sum_{i=1}^N \|f_i\|\;\left\|\tfrac{1}{A}f_i + h_i\right\| \geq \sum_{i=1}^N \left| \langle f_j, \frac{1}{A}f_i + h_i \rangle \right| \geq \left| \sum_{i=1}^N \langle f_i, \tfrac{1}{A}f_i + h_i \rangle \right| =n. $$
This is not possible. Therefore $q_i\|f_i\|\;\left\|\tfrac{1}{A}f_i + h_i\right\| = 1,$ for all $1 \leq i \leq N.$\\
Consequently, $q_i\|f_i\|\;\left\|\frac{1}{A}f_i + h_i\right\| = \frac{q_i}{A}\|f_i\|^2,$ for all $1 \leq i \leq N.$ This implies, $\frac{1}{A^2}\|f_i\|^2 = \left\|\tfrac{1}{A}f_i + h_i\right\|^2= \frac{1}{A^2}\|f_i\|^2  + \|h_i \|^2 + \frac{2}{A} Re (\langle f_i, h_i\rangle ),$\;\;for all $1 \leq i \leq N.$ i.e\; $\|h_i \|^2 + \frac{2}{A} Re (\langle f_i, h_i\rangle ) =0,$\;\;for all $1 \leq i \leq N.$ Taking sum over $1 \leq i \leq N,$ we have $\sum_{i=1}^N \|h_i\|^2 + \frac{2}{A} Re \left( \sum_{j=1}^N \langle f_i, h_i \rangle \right) =0.$ Using the condition $\sum_{i=1}^N \langle f_i, h_i \rangle =0 \left(Re \sum_{i=1}^N \langle f_i, h_i \rangle =0 \right),$ we have $\sum_{i=1}^N \|h_i\|^2 = 0. $ This implies $h_i =0,$ for all $1 \leq i \leq N.$ Hence, $(F, S_{F}^{-1}F )$  is the unique 1-erasure POD-pair in   $\mathcal{H}.$\\

\noindent
Conversely, let $(F,S_{F}^{-1}F)$ be the unique probabilistic 1-erasure optimal dual pair in  $\mathcal{H}.$\\
Therefore, by proposition(\ref{prop4point4}), $q_i\|f_i\|\;\| S_{F}^{-1}f_i\| = \frac{q_i}{A}\|f_i\|^2 = 1 ,$ for all $1 \leq i \leq N.$ Therefore $\mathcal{A}_{P}^{(1)}(F,S_{F}^{-1}F) = max_{i=1}^N \frac{q_i}{A}\|f_i\|^2 = 1.$\\
Let $G= \left\{\frac{1}{A}f_i + h_i\right\}_{i=1}^N$ be a dual of $F$ such that $(F,G)$ be an 1-erasure PASOD-pair in $\mathcal{H}$. Therefore, $ 1 =\frac{q_i}{A}\|f_i\|^2 =  max_{i=1}^N \;q_i\frac{{| \langle f_i,\frac{1}{A}f_i + h_i\rangle | + \|f_i\|\; \|\frac{1}{A}f_i +h_i \|}}{2} \geq max_{i=1}^N \; q_i\left| \langle f_i,\frac{1}{A}f_i + h_i\rangle \right| .$ This implies $1 \geq q_i\left| \langle f_i,\frac{1}{A}f_i + h_i\rangle \right|,$ for all $1 \leq i \leq N.$  Taking sum over $1 \leq i \leq N,$ we have $n = \sum_{i=1}^N \frac{1}{q_i}\geq \sum_{i=1}^N \left| \langle f_i,\frac{1}{A}f_i + h_i\rangle \right| \geq \left| \sum_{i=1}^N  \langle f_i,\frac{1}{A}f_i + h_i\rangle \right| =n. $ Therefore, $n= \sum_{i=1}^N \left| \langle f_i,\frac{1}{A}f_i + h_i\rangle \right|.$
If for any j, $1\leq j \leq N,$ \;$q_j\left| \langle f_j,\frac{1}{A}f_j + h_j\rangle \right| < 1 ,$ then $n \leq \sum_{i=1}^N \left| \langle f_i,\frac{1}{A}f_i + h_i\rangle \right| < \sum_{i=1}^N \frac{1}{q_i} =n, $ which is not possible. Therefore, $q_i \left| \langle f_i,\frac{1}{A}f_i + h_i\rangle \right| =1,$ for all $1 \leq i \leq N.$ This implies $q_i \left| \langle f_i,\frac{1}{A}f_i + h_i\rangle \right| = \frac{q_i}{A}\|f_i\|^2.$  Accordingly, $\left| \frac{1}{A} \|f_i\|^2 + \langle f_i,h_i \rangle \right| = \frac{1}{A}\|f_i\|^2,$ for all $1 \leq i \leq N.$ Therefore, $Re(\langle f_i,h_i \rangle) \leq 0,$ \;for all $1 \leq i \leq N.$ Also from the fact $\sum_{i=1}^N \langle f_i,h_i \rangle =0 \left( \sum_{i=1}^N Re(\langle f_i,h_i \rangle )=0 \right),$ we can conclude that $Re(\langle f_i,h_i \rangle )=0,$ for all $1 \leq i \leq N.$\\

As $(F,G)$ is an 1-erasure PASOD-pair in $\mathcal{H},$ then
\begin{align*}
     1&= max_{i=1}^N\;\; q_i\frac{\frac{1}{A} \|f_i\|^2 + \|f_i\|\;\| \tfrac{1}{A}f_i + h_i \|}{2}\\&=  max_{i=1}^N \frac{1}{2} \left\{ 1 + q_i\|f_i\|\;\|\tfrac{1}{A}f_i + h_i \| \right\} \\&=  max_{i=1}^N \frac{1}{2} \left\{ 1 + \sqrt{\frac{q_i^2}{A^2}\|f_i\|^4 + q_i^2 \|f_i\|^2\;\|h_i\|^2 +  q_i^2 \|f_i\|^2 \frac{2}{A} Re(\langle f_i,h_i \rangle)} \right\} \\&=   max_{i=1}^N \frac{1}{2} \bigg\{ 1 + \sqrt{1 +  q_i^2 \|f_i\|^2\;\|h_i\|^2} \bigg\}.
\end{align*}
As a consequence,\;\; $h_i =0,\;\;1 \leq i \leq N.$  Hence,  $(F,S_{F}^{-1}F )$ is the unique 1-erasure PASOD-pair in  $\mathcal{H}.$ 
\end{proof} \hfill{$\square$}

\begin{cor}
 Let $F = \{f_i\}_{i=1}^N $ be a tight frame for $\mathcal{H}$  and  $\{q_i\}_{i=1}^N$ be a weight number sequence given by  (\ref{eqn2point2}). Then following are equivalent;
 \begin{enumerate}
    \item [{\em (i)}] $(F,S_{F}^{-1}F)$ is an 1-erasure POD-pair.
    \item [{\em (ii)}] $(F,S_{F}^{-1}F)$ is an 1-erasure PSOD-pair.
    \item [{\em (iii)}] $(F,S_{F}^{-1}F)$ is an 1-erasure PASOD-pair.
\end{enumerate} 
\end{cor}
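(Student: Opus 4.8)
The plan is to observe that this corollary follows immediately by combining the two preceding theorems with the very definition of $\zeta_P^{(1)}$; the entire argument is a chain of biconditionals, so there is essentially no obstacle to overcome.

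First I would recall that $\zeta_P^{(1)}$ was defined to be precisely the collection of all $1$-erasure PASOD-pairs in $\mathcal{H}$, so assertion (iii) is, by definition, the statement $(F,S_F^{-1}F)\in\zeta_P^{(1)}$. Next, since $F$ is tight, Theorem \ref{thm4point2} applies verbatim and yields (i) $\Leftrightarrow (F,S_F^{-1}F)\in\zeta_P^{(1)}$, while Theorem \ref{thm4point1} applies and yields (ii) $\Leftrightarrow (F,S_F^{-1}F)\in\zeta_P^{(1)}$. Threading all three statements through the common condition $(F,S_F^{-1}F)\in\zeta_P^{(1)}$ gives (i) $\Leftrightarrow$ (ii) $\Leftrightarrow$ (iii), and the proof is complete.

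If one preferred a self-contained argument rather than citing the earlier theorems, the alternative route is to use the Remark identifying $\zeta_P^{(1)}$ with $\{(F,G): |\langle f_i,g_i\rangle| = \|f_i\|\,\|g_i\| = 1/q_i \text{ for all } i\}$ together with Propositions \ref{prop4point3} and \ref{prop4point4}, which characterize $1$-erasure PSOD-pairs by $\langle f_i,g_i\rangle = |\langle f_i,g_i\rangle| = 1/q_i$ and $1$-erasure POD-pairs by $\langle f_i,g_i\rangle = \|f_i\|\,\|g_i\| = 1/q_i$. For the canonical dual of a tight frame with bound $A$ one has $g_i = \tfrac1A f_i$, so $\langle f_i,g_i\rangle = |\langle f_i,g_i\rangle| = \|f_i\|\,\|g_i\| = \tfrac1A\|f_i\|^2$ automatically, and each of the three optimality conditions collapses to the single requirement $q_i\|f_i\|^2 = A$ for all $i$ (equivalently $\tfrac1A\|f_i\|^2 = \tfrac1{q_i}$).

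The only point that requires a bit of care is that each of the three characterizing statements (the two theorems, or the two propositions together with the Remark) is stated under a tightness hypothesis expressed through $S_F^{-1}F = \tfrac1A F$; since that is exactly the standing assumption of the corollary, no additional argument is needed, and I would simply remark on this rather than reprove anything.
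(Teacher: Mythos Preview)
Your proposal is correct and matches the paper's approach: the paper gives no explicit proof for this corollary, treating it as an immediate consequence of Theorems~\ref{thm4point1} and~\ref{thm4point2} together with the definition of $\zeta_P^{(1)}$, which is exactly the chain of biconditionals you describe. Your alternative self-contained route via Propositions~\ref{prop4point3}, \ref{prop4point4} and the Remark is also valid and simply unpacks what those theorems encapsulate.
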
 \hfill{$\square$}

\begin{thm}\cite{jins} \label{thm4point5}
Let $F = \{f_i\}_{i=1}^N$ be a (N, n)-frame for $\mathcal{H}.$ If $F$ is a uniform tight frame for H, then the canonical dual is the unique optimal dual
frame of $F$ for m-erasures.
\end{thm}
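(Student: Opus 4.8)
The plan is to first settle the case $m=1$ and then lift it to general $m$ by an induction exploiting the nested definition of $m$-erasure optimality; throughout, ``optimal'' refers to minimizing the operator-norm error $\mathcal{O}^{(m)}(F,G)=\max\{\|E_\Lambda\|:|\Lambda|=m\}$ with unweighted error operator $E_\Lambda f=\sum_{i\in\Lambda}\langle f,f_i\rangle g_i$. Since $F$ is tight with bound $A$, its frame operator is $S_F=A\,I$, so the canonical dual is $S_F^{-1}F=\left\{\tfrac{1}{A}f_i\right\}_{i=1}^N$; uniformity gives $\|f_i\|=a$ for a common constant $a$, and from $\sum_{i=1}^N\|f_i\|^2=\mathrm{tr}(S_F)=nA$ one gets $Na^2=nA$, hence $\|f_i\|\,\left\|\tfrac{1}{A}f_i\right\|=\tfrac{a^2}{A}=\tfrac{n}{N}$ for every $i$. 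Therefore $\mathcal{O}^{(1)}(F,S_F^{-1}F)=\max_i\|f_i\|\,\left\|\tfrac{1}{A}f_i\right\|=\tfrac{n}{N}$.

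Next I would prove the matching lower bound for an arbitrary dual $G=\{g_i\}_{i=1}^N$ of $F$. For a single erasure at index $i$ the error operator is the rank-one map $f\mapsto\langle f,f_i\rangle g_i$, of norm $\|f_i\|\,\|g_i\|$, so $\mathcal{O}^{(1)}(F,G)=\max_i\|f_i\|\,\|g_i\|$. Averaging over $i$ and using Cauchy--Schwarz, the triangle inequality, and the identity $\sum_{i=1}^N\langle f_i,g_i\rangle=n$,
\[
\max_{1\le i\le N}\|f_i\|\,\|g_i\|\;\ge\;\frac1N\sum_{i=1}^N\|f_i\|\,\|g_i\|\;\ge\;\frac1N\sum_{i=1}^N|\langle f_i,g_i\rangle|\;\ge\;\frac1N\left|\sum_{i=1}^N\langle f_i,g_i\rangle\right|\;=\;\frac{n}{N}.
\]
Hence the canonical dual is a $1$-erasure optimal dual. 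For uniqueness, suppose $G$ attains the value $\tfrac{n}{N}$. Then each inequality above is an equality: the first forces $\|f_i\|\,\|g_i\|=\tfrac{n}{N}$ for all $i$; the second forces equality in Cauchy--Schwarz, i.e.\ $g_i$ is a nonnegative scalar multiple of $f_i$; and the third forces the numbers $\langle f_i,g_i\rangle$ to share a common argument, which, since their sum is the positive real $n$, means $\langle f_i,g_i\rangle=\tfrac{n}{N}>0$ for each $i$. Writing $g_i=c_if_i$ and using $\|f_i\|^2=a^2=\tfrac{nA}{N}$ yields $c_i=\tfrac{1}{A}$, so $G=S_F^{-1}F$.

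Finally, for $m>1$ I would argue by induction on $m$: by definition the set of $m$-erasure optimal duals is contained in the set of $(m-1)$-erasure optimal duals, which by the inductive hypothesis is the singleton $\{S_F^{-1}F\}$; hence the canonical dual is vacuously the unique $m$-erasure optimal dual of $F$. The step requiring the most care is the uniqueness argument for $m=1$, where one must use all three equality cases (max equals average, Cauchy--Schwarz, and the triangle inequality for complex numbers) simultaneously and combine them with the normalization $\sum_{i=1}^N\langle f_i,g_i\rangle=n$ to pin down the scalars $c_i$; the remaining computations are routine.
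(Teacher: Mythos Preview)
The paper does not give its own proof of this theorem: it is quoted from \cite{jins} and followed only by an end-of-proof symbol, so there is nothing to compare against. Your argument is a correct and self-contained proof of the result.

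Two small comments. First, in the uniqueness step you write that equality in Cauchy--Schwarz forces $g_i$ to be a \emph{nonnegative} scalar multiple of $f_i$; strictly speaking Cauchy--Schwarz alone only gives $g_i=c_i f_i$ for some scalar $c_i\in\mathbb{C}$. The nonnegativity (indeed, positivity and reality) of $c_i$ comes only after you combine with the third equality, which forces $\langle f_i,g_i\rangle=\overline{c_i}\,\|f_i\|^2=\tfrac{n}{N}>0$. You reach the right conclusion, but the phrasing should be adjusted. Second, your induction for $m>1$ uses the hierarchical definition of $m$-erasure optimality (the $m$-optimal set is taken \emph{within} the $(m-1)$-optimal set), which is exactly the convention in \cite{jins} and in this paper; with that definition the step is immediate, since a singleton $(m-1)$-optimal set trivially has a unique minimizer at level $m$.
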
\hfill{$\square$}

\begin{prop}
Let $F= \{f_i\}_{i=1}^N$ be a tight frame for  $\mathcal{H}$ with  weight number sequence $\{q_i\}_{i=1}^N$  given by  (\ref{eqn2point2}). Then $(F, S_{F}^{-1}F )$ dual pair is the unique 1-erasure PASOD-pair if and only if $q_i\|f_i\|^2=c,$ for some constant $c$ and for all $1 \leq i \leq N.$
\end{prop}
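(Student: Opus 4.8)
The plan is to reduce the PASOD-pair statement to the corresponding POD-pair statement via Theorem~\ref{thm4point3}, and then to characterise when $(F,S_{F}^{-1}F)$ is the unique $1$-erasure POD-pair using Proposition~\ref{prop4point4} together with the equality case of the Cauchy--Schwarz inequality. Throughout I would write $A$ for the tight frame bound of $F$, so that $S_{F}^{-1}f_i=\tfrac{1}{A}f_i$ for every $i$.

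For the forward direction I would argue as follows. If $(F,S_{F}^{-1}F)$ is the unique $1$-erasure PASOD-pair, then by Theorem~\ref{thm4point3} it is the unique $1$-erasure POD-pair, and in particular it is a POD-pair; Proposition~\ref{prop4point4} applied with $g_i=\tfrac{1}{A}f_i$ then gives $\tfrac{1}{A}\|f_i\|^2=\|f_i\|\,\|\tfrac{1}{A}f_i\|=\tfrac{1}{q_i}$ for all $i$, i.e. $q_i\|f_i\|^2=A$ for every $i$, which is the claim with $c=A$.

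For the converse, assuming $q_i\|f_i\|^2=c$ for all $i$, I would first pin down the value of $c$. From $\sum_{i=1}^N\langle S_{F}^{-1}f_i,f_i\rangle=n$ we get $\tfrac{1}{A}\sum_{i=1}^N\|f_i\|^2=n$, while $\sum_{i=1}^N\|f_i\|^2=c\sum_{i=1}^N\tfrac{1}{q_i}=cn$ (using $\sum_i\tfrac{1}{q_i}=n$); hence $c=A$. Consequently $\langle f_i,S_{F}^{-1}f_i\rangle=\|f_i\|\,\|S_{F}^{-1}f_i\|=\tfrac{1}{q_i}$ for all $i$, so by Proposition~\ref{prop4point4} (equivalently Theorem~\ref{thm4point2}) $(F,S_{F}^{-1}F)$ is a $1$-erasure POD-pair. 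It remains to prove uniqueness: let $G=\{g_i\}_{i=1}^N$ be any dual of $F$ with $(F,G)$ a $1$-erasure POD-pair. By Proposition~\ref{prop4point4}, $\langle f_i,g_i\rangle=\|f_i\|\,\|g_i\|=\tfrac{1}{q_i}$ for every $i$. Since $c=A\neq 0$ forces $f_i\neq 0$, the Cauchy--Schwarz equality $|\langle f_i,g_i\rangle|=\|f_i\|\,\|g_i\|$ gives $g_i=\lambda_i f_i$ for a scalar $\lambda_i$, and then $\overline{\lambda_i}\|f_i\|^2=\langle f_i,g_i\rangle=\tfrac{1}{q_i}=\tfrac{1}{A}\|f_i\|^2>0$ forces $\lambda_i=\tfrac{1}{A}$, so $g_i=S_{F}^{-1}f_i$ and $G=S_{F}^{-1}F$. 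Thus $(F,S_{F}^{-1}F)$ is the unique $1$-erasure POD-pair, and Theorem~\ref{thm4point3} upgrades this to the unique $1$-erasure PASOD-pair.

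I expect the uniqueness step in the converse to be the only genuinely substantive point; everything else is bookkeeping with the identities $\sum_i\langle g_i,f_i\rangle=n$ and $\sum_i\tfrac{1}{q_i}=n$. The subtlety there is that one must invoke Proposition~\ref{prop4point4}, which says $\langle f_i,g_i\rangle$ equals the \emph{positive} real $\tfrac{1}{q_i}$, rather than the $\zeta_{P}^{(1)}$-description in the Remark, which only controls $|\langle f_i,g_i\rangle|$; it is precisely the positivity that pins down the phase $\lambda_i=\tfrac{1}{A}$ instead of merely $|\lambda_i|=\tfrac{1}{A}$.
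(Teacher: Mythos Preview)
Your proof is correct and follows the same overall strategy as the paper: both directions are routed through Theorem~\ref{thm4point3} to reduce the PASOD-pair question to the corresponding POD-pair question, and Proposition~\ref{prop4point4} is used to identify $(F,S_F^{-1}F)$ as a POD-pair once $q_i\|f_i\|^2$ is constant (with the value $c=A$ computed exactly as you do).

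The one genuine difference is in the uniqueness step of the converse. The paper writes an arbitrary competing dual as $G=\{\tfrac{1}{A}f_i+h_i\}_{i=1}^N$, uses $q_i\|f_i\|\,\|\tfrac{1}{A}f_i+h_i\|=1$ for all $i$ to obtain $\|h_i\|^2+\tfrac{2}{A}\operatorname{Re}\langle f_i,h_i\rangle=0$, and then sums over $i$ together with $\sum_i\langle f_i,h_i\rangle=0$ to force each $h_i=0$ (the argument imported from Theorem~\ref{thm4point3}). You instead invoke Proposition~\ref{prop4point4} directly to get $\langle f_i,g_i\rangle=\|f_i\|\,\|g_i\|=\tfrac{1}{q_i}$ and appeal to the equality case of Cauchy--Schwarz to conclude $g_i=\lambda_i f_i$ with $\lambda_i=\tfrac{1}{A}$. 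Your route is shorter and more conceptual, and your remark that it is the \emph{positivity} of $\langle f_i,g_i\rangle=\tfrac{1}{q_i}$ in Proposition~\ref{prop4point4} (rather than merely $|\langle f_i,g_i\rangle|=\tfrac{1}{q_i}$) that fixes the phase is exactly the point. For the direction ``unique PASOD-pair $\Rightarrow q_i\|f_i\|^2$ constant'' the paper argues directly from $\max_i \tfrac{q_i}{A}\|f_i\|^2=1$ and $\sum_i \tfrac{1}{A}\|f_i\|^2=n=\sum_i\tfrac{1}{q_i}$, whereas you go via Theorem~\ref{thm4point3} and Proposition~\ref{prop4point4}; both are equally immediate.
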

\begin{proof}
Let $F= \{f_i\}_{i=1}^N$ be a tight frame with tight bound $A.$\\
If  $q_i\|f_i\|^2=c, \; 1 \leq i \leq N,$ then we have $n = \sum_{i=1}^N \langle f_i, \frac{1}{A}f_i \rangle = \sum_{i=1}^N \frac{1}{A} \|f_i\|^2 =  \sum_{i=1}^N \;\frac{c}{Aq_i}= \frac{cn}{A}.$ This implies $c=A.$ Therefore,  $\|f_i \|\;\|\frac{1}{A} f_i\| = \langle f_i,\frac{1}{A} f_i \rangle = \frac{1}{A} \|f_i\|^2 = \frac{1}{q_i} ,\;1 \leq i \leq N.$ Therefore by Proposition \ref{prop4point4}, $(F, S_{F}^{-1}F )$ is an 1-erasure optimal dual pair in $\mathcal{H}.$\\
Let $G= \left\{\frac{1}{A}f_i + h_i\right\}_{i=1}^N$ be a dual of $F$ such that $(F,G)$ is an 1-erasure POD-pair in $\mathcal{H}.$ Therefore, 
$$ \mathcal{O}_{P}^{(1)}(F, S_{F}^{-1}F) = max_{i=1}^N \frac{q_i}{A}\|f_i \|^2 = 1 = \mathcal{O}_{P}^{(1)}(F,G) = max_{i=1}^N q_i\|f_i \|\; \| \tfrac{1}{A}f_i + h_i\|. $$
Using similar argument as in Theorem \ref{thm4point3}, we can show that $h_i =0,\;\; 1\leq i\leq N.$ Therefore  $(F, S_{F}^{-1}F )$ be the unique 1-erasure POD-pair in $\mathcal{H}.$ Then by Theorem \ref{thm4point3}, $(F, S_{F}^{-1}F )$  is the unique 1-erasure PASOD-pair in $\mathcal{H}.$

Conversely, if  $(F, S_{F}^{-1}F )$ is the unique 1-erasure PASOD-pair in $\mathcal{H},$ then $\mathcal{A}_{P}^{(1)}(F,S_{F}^{-1}F) = max_{i=1}^N q_i\frac{{| \langle f_i,\frac{1}{A}f_i \rangle | + \|f_i\|\; \|\frac{1}{A}f_i \| }}{2} =  max_{i=1}^N \frac{q_i}{A}\|f_i\|^2 = 1.$  If for any j, $1\leq j \leq N,$ \;$\frac{q_j}{A} \|f_j\|^2 < 1 ,$ then $n =  \sum_{i=1}^N  \langle f_i,\frac{1}{A}f_i \rangle < \sum_{i=1}^N \frac{1}{q_i} =n. $ Which is not possible. Therefore, $\frac{q_j}{A} \|f_j\|^2 = 1 , \;\;1 \leq i \leq N.$
\end{proof}\hfill{$\square$}

\section{Examples}
\begin{example}
 Let $ H= {\mathbb{C}}^2$ and consider a frame $F= \{f_1,f_2,f_3\},$ where $f_1 = \left[\begin{array}{l}
1 \\0
\end{array}\right] $,
$f_2 = \left[\begin{array}{l}
0 \\ 1/2
\end{array}\right] $
 and $ f_3 = \left[\begin{array}{l}
0 \\ 1/2
\end{array}\right], $ with probability sequence $\{p_i \}_{i=1}^3 = \left\{0, \frac{1}{2}, \frac{1}{2}\right\}.$ Therefore the weight number sequence is $\{q_i \}_{i=1}^3 = \left\{1, 2, 2 \right\}.$ \\
Then the canonical dual of $F$ is
$$S_{F}^{-1} F = \left\{ \left[\begin{array}{r} 1 \\ 0 \end{array}\right],  \left[\begin{array}{r} 0 \\ 1 \end{array}\right], \left[\begin{array}{l} 0 \\ 1  \end{array}\right] \;\right\}. $$\\
\noindent
It is easy to calculate that, $r_{P}^{(1)}(F,S_{F}^{-1}F) = max_{i=1}^3 q_i |\langle f_i,S_{F}^{-1} f_i \rangle | = 1. $ Therefore, the canonical dual is not only the 1-erasure PSOD-frame of $F,$ also the dual pair $(F,S_{F}^{-1} F)$ is an 1-erasure PSOD-pair.\\
Similarly, we can calculate that $\mathcal{O}_{P}^{(1)}(F,S_{F}^{-1}F) = max_{i=1}^3 q_i \| f_i\|\;\|S_{F}^{-1} f_i \| = 1. $Therefore, the canonical dual is not only the 1-erasure POD-frame of $F,$ also the dual pair $(F,S_{F}^{-1} F)$ is an 1-erasure POD-pair.\\
Accordingly, $\mathcal{A}_{P}^{(1)}(F,S_{F}^{-1}F) = max_{i=1}^3 q_i \| f_i\|\;\|S_{F}^{-1} f_i \| = 1. $Therefore, the canonical dual is not only the 1-erasure PASOD-frame of $F,$ also the dual pair $(F,S_{F}^{-1} F)$ is an 1-erasure PASOD-pair.\\

The set of duals of $F$ is of the form  
$$ D = \left\{ \left[\begin{array}{r} 1 \\ 0 \end{array}\right], \left[\begin{array}{r} \alpha \\ 1 - \beta \end{array}\right], \left[\begin{array}{l} -\alpha \\ 1-\beta \end{array}\right] \right\}; $$  where $\alpha, \beta \in \mathbb{C}.$\\

Let, $G= \{ g_i \}_{i=1}^3 = \left\{ \left[\begin{array}{r} 1 \\ 0 \end{array}\right], \left[\begin{array}{r} \alpha \\ 1 - \beta \end{array}\right], \left[\begin{array}{l} -\alpha \\ 1-\beta \end{array}\right] \right\}; $  where $\alpha, \beta \in \mathbb{C},$ be a dual of $F.$\\
Then, $r_{P}^{(1)}(F,G) = max_{i=1}^3 q_i |\langle f_i,g_i \rangle | = max_{i=1}^3 \bigg\{1, |1+ \beta |, | 1 - \beta| \bigg\}. $ Therefore, $G$ be an 1-erasure PSOD-frame of $F$ or $(F,G)$ dual pair is an 1-erasure PSOD-pair in $\mathcal{H}$ if and only if $\beta = 0.$ In other words, for any value of $\alpha \in \mathbb{C}$ and $\beta = 0,$ $G$ become 1-erasure PSOD-frame of $F$ and $(F,G)$ be an 1-erasure PASOD-pair in $\mathcal{H}.$ So, neither PSOD-frame of $F$ is unique nor PSOD-pair for $F$ is unique for 1-erasure.\\

\noindent
Similarly, $\mathcal{O}_{P}^{(1)}(F,G) = max_{i=1}^3 q_i \|f_i\|\;\|g_i\| = max_{i=1}^3 \bigg\{1, \left|\sqrt{\alpha^2 + (1+ \beta)^2}\right|, \left|\sqrt{\alpha^2 + (1- \beta)^2}\right|\bigg\}. $ If $\beta$ is any non-zero complex number then either $\left|\sqrt{\alpha^2 + (1+ \beta)^2}\right| > 1$ or $\left|\sqrt{\alpha^2 + (1- \beta)^2}\right| > 1,$ according to $Re (\beta) > 0 \;\text{or}\; Re (\beta) < 0.$ Therefore, $G$ is an 1-erasure PSOD-frame of $F$ or $(F,G)$ is an 1-erasure PSOD-pair in $\mathcal{H}$ implies $\beta = 0.$ For $\beta = 0,$ if $\alpha \neq 0$ then $\left|\sqrt{\alpha^2 + (1+ \beta)^2}\right| > 1.$ Therefore,  $G$ is an 1-erasure PSOD-frame of $F$ or $(F,G)$ is an 1-erasure PSOD-pair in $\mathcal{H}$ implies $\alpha = 0.$ Consequently,  $G$ is an 1-erasure PSOD-frame of $F$ or $(F,G)$ is an 1-erasure PSOD-pair in $\mathcal{H}$ if and only if $\alpha = \beta = 0.$ So, the canonical dual is the unique 1-erasure POD-frame of $F$ and $(F,S_{F}^{-1})$ be the unique 1-erasure POD-pair corresponding to $F$ in $\mathcal{H}.$\\

Also, it is easy to calculate that,
\begin{align*}
    \mathcal{A}_{P}^{(1)}(F,G) &= max_{i=1}^3  \frac{q_i \bigg\{|\langle f_i , g_i \rangle | + \| f_i\| \| g_i \| \bigg\}}{2} \\&= max_{i=1}^3 \bigg\{1,\frac{|1 + \beta| + \left|\sqrt{\alpha^2 + (1+ \beta)^2} \right|}{2}, \frac{|1 - \beta| + \left|\sqrt{\alpha^2 + (1 - \beta)^2} \right|}{2}\bigg\}.
\end{align*}

Using the similar argument as above we can conclude that $G$ is an 1-erasure PASOD-frame of $F$ or $(F,G)$ is an 1-erasure PASOD-pair in $\mathcal{H}$ if and only if $\alpha = \beta = 0.$  So, the canonical dual is the unique 1-erasure PASOD-frame of $F$ and $(F,S_{F}^{-1})$ be the unique 1-erasure PASOD-pair corresponding to $F$ in $\mathcal{H}.$
\end{example}

\begin{example}
 Let $ H= {\mathbb{C}}^2$ and consider a frame $F= \{f_1,f_2,f_3\},$ where $f_1 = \left[\begin{array}{l}
1 \\0
\end{array}\right] $,
$f_2 = \left[\begin{array}{l}
0 \\ 1
\end{array}\right] ,$
 $ f_3 = \left[\begin{array}{l}
\frac{1}{\sqrt{2}} \\ \frac{1}{\sqrt{2}}
\end{array}\right] ,$ with probability sequence $\{p_1,p_2,p_3\}, $ where $p_1 = \frac{1}{4}, \; p_2 = \frac{1}{4}, p_3 = \frac{1}{2}.$ Therefore, the weight number sequence is  $\{q_1,q_2,q_3\}, $ where $q_1 = \frac{4}{3}, \; q_2 = \frac{4}{3}, q_3 = 2.$ \\
Therefore, it is easy to compute that 
$S_F= \begin{bmatrix}
3/2 & 1/2\\
1/2 &  3/2 
\end{bmatrix} $ and the canonical dual is :
$$S_{F}^{-1} F = \left\{ \left[\begin{array}{r} 3/4 \\ -1/4 \end{array}\right],  \left[\begin{array}{r} -1/4 \\ 3/4 \end{array}\right], \left[\begin{array}{l} 1/{2\sqrt{2}} \\ 1/{2\sqrt{2}}  \end{array}\right] \right\} .$$ 
\noindent
Thus, $q_i \langle f_i , S_{F}^{-1}f_i \rangle = 1,\; i=1,2,3$ \; and $q_1 \|f_1\|\;\left\| S_{F}^{-1}f_1 \right\| = \frac{\sqrt{10}}{3} ,\;q_2 \|f_2\|\;\left\| S_{F}^{-1}f_2 \right\| = \frac{\sqrt{10}}{3},\;q_1 \|f_3\|\;\left\| S_{F}^{-1}f_3\right\| = 1.$ Therefore, $r_{P}^{(1)}( F, S_{F}^{-1}F) = 1$\; , $\mathcal{O}_{P}^{(1)}( F, S_{F}^{-1}F) = \frac{\sqrt{10}}{3} = 1.0540925 $ and  $\mathcal{A}_{P}^{(1)}( F, S_{F}^{-1}F) = \frac{\sqrt{10} + 3}{6} = 1.0270462767 .$ \\

 The set of duals of $F$ is of the form \\
$$D = \left\{ \left[\begin{array}{r} 3/4 + \alpha  \\ - 1/4 +\beta \end{array}\right], \left[\begin{array}{r} - 1/4 +\alpha  \\  3/4 +\beta   \end{array}\right], \left[\begin{array}{l}1/{2\sqrt{2}} - \sqrt{2}\alpha \\ 1/{2\sqrt{2}} - \sqrt{2}\beta \end{array}\right]  \right\}, \text{where}\; \alpha, \beta \in \mathbb{C}.$$
If we take $ \alpha = \beta = -.01,$ then the dual is \\
$$ G =\{g_i\}_{i=1}^3 = \left\{ \left[\begin{array}{r} 0.74  \\ -0.26 \end{array}\right], \left[\begin{array}{r} - 0.26 \\  0.74   \end{array}\right], \left[\begin{array}{l}{1.04}/{2\sqrt{2}}  \\ {1.04}/{2\sqrt{2}}  \end{array}\right]  \right\}.$$
Thus, $r_{P}^{(1)}( F,G) = max_{i=1}^3 q_i|\langle f_i , g_i \rangle |= 1.04,\; \mathcal{O}_{P}^{(1)}( F,G) = max_{i=1}^3 q_i\| f_i\|\;\| g_i \| = 1.045796 $ \; \text{and} $\mathcal{A}_{P}^{(1)}( F,G) = \max_{i=1}^3 \frac{q_i \bigg\{|\langle f_i , g_i \rangle | + \| f_i\| \| g_i \| \bigg\}}{2} = 1.0162313.$\\~\\
Therefore, the canonical dual is 1-erasure PSOD of $F$ and $(F,S_{F}^{-1} F)$ is an 1-erasure PSOD-pair in $\mathcal{H}.$ But, the canonical dual is neither POD nor PASOD for 1-erasure. Also,
 $(F,S_{F}^{-1} F)$  is neither POD-pair nor PSOD-pair for 1-erasure.\\
 As in Theorem(\ref{thm3point6}), $F_1 = span\{f_1,f_2\}$ and $F_2 = span\{f_3 \}.$ Therefore $F_1 \cap F_2 \neq \{0\}.$ Therefore canonical dual is not a POD for 1-erasure.\\
  As in Theorem(\ref{thm3point8}), $H_1 = span\{f_1,f_2\}$ and $H_2 = span\{f_3 \},$ and so $H_1 \cap H_2 \neq \{0\}$ and the canonical dual is not an 1-erasure PASOD.

\end{example}

\begin{example}
  
  Let $ H= {\mathbb{C}}^2$ and consider a frame $F= \{f_1,f_2,f_3\},$ where $f_1 = \left[\begin{array}{l}
1 \\0
\end{array}\right] $,
$f_2 = \left[\begin{array}{l}
0 \\ 1
\end{array}\right] $
 and $ f_3 = \left[\begin{array}{l}
1 \\ 1
\end{array}\right] $ with probability sequence $\{p_i \}_{i=1}^3 = \left\{\frac{1}{2}, \frac{1}{3}, \frac{1}{6} \right\}.$ Therefore the weight number sequence is $\{q_i \}_{i=1}^3 = \left\{2, \frac{3}{2}, \frac{6}{5} \right\}.$ \\
Then the canonical dual of $F$ is
$$S_{F}^{-1} F = \left\{ \frac{1}{3}\left[\begin{array}{r} 2 \\ -1 \end{array}\right],  \frac{1}{3}\left[\begin{array}{r} -1 \\ 2 \end{array}\right], \frac{1}{3}\left[\begin{array}{l} 1 \\ 1  \end{array}\right] \;\;\right\}. $$\\
\noindent
Thus, $r_{P}^{(1)}(F,S_{F}^{-1}F) = max_{i=1}^3 q_i \left|\langle f_i,S_{F}^{-1} f_i \rangle \right| = \frac{4}{3},\;\;  \mathcal{O}_{P}^{(1)}(F,S_{F}^{-1}F) = max_{i=1}^3 q_i \left\| f_i\right\|\;\left\|S_{F}^{-1} f_i \right\| = \frac{2\sqrt{5}}{3}$ and\;\;$\mathcal{A}_{F,S_{F}^{-1}F}^{(1)} = max_{i=1}^3 \dfrac{{q_i\bigg\{\left| \langle f_i,S_{F}^{-1}f_i \rangle \right| + \|f_i\|\; \left\|S_{F}^{-1}f_i\right\| \bigg\}}}{2} = \frac{2+\sqrt{5}}{3} . $\\
\noindent
Now,\; $ q_1\left\{\left\| S_{F}^{-1/2}f_1 \right\|^2 + \|f_1\|\;\left\| S_{F}^{-1}f_1\right\|\right\} = \frac{4 + 2\sqrt{5}}{3},\;q_2\left\{\left\| S_{F}^{-1/2}f_2 \right\|^2 + \|f_2\|\;\left\| S_{F}^{-1}f_2\right\|\right\}= \frac{2 + \sqrt{5}}{2},$ and\\\;$\left\{ \left\| S_{F}^{-1/2}f_3 \right\|^2 + \|f_3\|\;\left\| S_{F}^{-1}f_3\right\|\right\} = \frac{4}{5}$\\

\noindent
As theorem (\ref{thm3point8}) $ H_1 = span\left\{ \left[\begin{array}{l}
1 \\0
\end{array}\right] \right\},$   and $H_2 = span \left\{  \left[\begin{array}{l}
0 \\ 1
\end{array}\right], \left[\begin{array}{l}
1 \\ 1
\end{array}\right]\right\}.$ \\
Therefore, $H_1 \cap H_2 \neq \{0\}.$\\
The set of duals of $F$ is of the form  
$$G= \{ g_i \}_{i=1}^3 = \bigg\{ \left[\begin{array}{r} 2/3 + \gamma  \\ -1/3 + \delta \end{array}\right], \left[\begin{array}{r} -1/3 + \gamma \\ 2/3 + \delta \end{array}\right], \left[\begin{array}{l} 1/3 - \gamma \\ 1/3 -\delta  \end{array}\right] \bigg\}; $$  where $\gamma, \delta \in \mathbb{C}.$
\noindent
If we take $\gamma = \delta = -\frac{1}{6},$\; then the dual is $G' = \{ g_i \}_{i=1}^3 = \bigg\{ \left[\begin{array}{r} 1/2   \\ -1/2 \end{array}\right], \left[\begin{array}{r} -1/2  \\ 1/2 \end{array}\right], \left[\begin{array}{l} 1/2 \\ 1/2  \end{array}\right] \bigg\}.$
It can be easily calculate that $r_{P}^{(1)}(F,G') = \frac{6}{5}, \mathcal{O}_{P}^{(1)}(F,G') = \sqrt{2}$ and $\mathcal{A}_{P}^{(1)}(F,G') = \frac{1+ \sqrt{2}}{2}.$\\
So the canonical dual is neither PSOD-frame nor POD-frame nor PASOD-frame of $F$ for 1-erasure.\\~\\
\end{example}

\begin{example}
Let $ H= {\mathbb{R}}^2$ and consider a frame $F= \{f_1,f_2,f_3\},$ where $f_1 = \left[\begin{array}{l}
1 \\0
\end{array}\right] $,
$f_2 = \left[\begin{array}{l}
-1/2 \\ \sqrt{3}/2
\end{array}\right] $
 and $ f_3 = \left[\begin{array}{l}
-1/2 \\ -\sqrt{3}/{2}
\end{array}\right] $ with probability sequence $\{p_i \}_{i=1}^3 = \left\{\frac{1}{3}, \frac{1}{3}, \frac{1}{3} \right\}.$ Therefore the weight number sequence is $\{q_i \}_{i=1}^3 = \left\{\frac{3}{2}, \frac{3}{2}, \frac{3}{2}\right\}.$ \\
So the canonical dual is self dual.\\
Therefore, $r_{P}^{(1)}(F,F) = \mathcal{O}_{P}^{(1)}( F,G)= \mathcal{A}_{P}^{(1)}( F,G)= max_{i=1}^3 q_i \|f_i\|^2 = \frac{3}{2}.$\\
In \cite{jins}, example 2.3, it is proved that the canonical dual is the unique optimal dual for 1-erasure. Therefore, the canonical dual is unique POD for 1-erasure. But $(F,S_{F}^{-1}F)$ is not a POD-pair for 1-erasure.

 The set of duals of $F$ is of the form \\
$$D = \left\{ \left[\begin{array}{r} 1 + \alpha  \\ \beta \end{array}\right], \left[\begin{array}{r} - 1/2 +\alpha  \\  \sqrt{3}/{2} +\beta   \end{array}\right], \left[\begin{array}{l}\ -1/2 +\alpha  \\ -\sqrt{3}/{2} +\beta \end{array}\right]  \right\}, \text{where}\; \alpha, \beta \in \mathbb{R}.$$
Let  $G= \left\{ \left[\begin{array}{r} 1 + \alpha  \\ \beta \end{array}\right], \left[\begin{array}{r} - 1/2 +\alpha  \\  \sqrt{3}/{2} +\beta   \end{array}\right], \left[\begin{array}{l}\ -1/2 +\alpha  \\ -\sqrt{3}/{2} +\beta \end{array}\right]   \right\},$ be a dual of $F.$\\
Then, 
$$r_{P}^{(1)}(F,G) = max \left\{ \frac{3}{2} \left|1+ \alpha \right|, \frac{3}{2} \left| \frac{1}{2}\left(\frac{1}{2} - \alpha \right) + \frac{\sqrt{3}}{2} \left(\frac{\sqrt{3}}{2} + \beta \right) \right|, \frac{3}{2} \left| \frac{1}{2} \left(\frac{1}{2} - \alpha \right) + \frac{\sqrt{3}}{2}\left(\frac{\sqrt{3}}{2} - \beta \right) \right|  \right\} $$
\noindent
If $\alpha > 0,$ then $\frac{3}{2} \left|1+ \alpha \right| > \frac{3}{2} .$\\
If $\alpha < 0 \;and \;\beta >0,$ then $\frac{3}{2} \left| \frac{1}{2}\left(\frac{1}{2} - \alpha \right) + \frac{\sqrt{3}}{2} \left(\frac{\sqrt{3}}{2} + \beta \right) \right| > \frac{3}{2}$\\
If $\alpha < 0 \;and \;\beta < 0,$ then $\frac{3}{2} \left| \frac{1}{2} \left(\frac{1}{2} - \alpha \right) + \frac{\sqrt{3}}{2}\left(\frac{\sqrt{3}}{2} - \beta \right) \right| > \frac{3}{2}$\\
Therefore, $r_{P}^{(1)}(F,G) \geq r_{P}^{(1)}(F,S_{F}^{-1}F),$ for any dual $G$ of $F$. Hence the canonical dual is the unique PSOD-frame of $F$ for 1-erasure. But, $(F,S_{F}^{-1}F)$ is not a PSOD-pair for 1-erasure.
\end{example}

\section*{Acknowledgement}
The author is thankful to Prof. Devaraj P. for his guidance and help. The author is also thankful to Ankush Kumar Garg and Tathagata Sarkar  for reading the manuscript carefully and giving helpful inputs. The author is grateful to IISER-Thiruvananthapuram for providing fellowship.

\end{document}